\newcommand{\C}{\mathbb C}
\newcommand{\B}{\mathbb B}
\newcommand{\D}{\mathbb D}
\newcommand{\Q}{\mathbb Q}
\newcommand{\R}{\mathbb R}
\newcommand{\N}{\mathbb N}
\newcommand{\Z}{\mathbb Z}
\newcommand{\diff}{\mathop{}\!\mathrm{d}}
\newcommand{\pardiff}[2]{\frac{\partial #1}{\partial #2}}
\newcommand{\norm}[1]{\left\Vert #1 \right\Vert}
\newcommand{\abs}[1]{\left| #1 \right|}
\DeclareMathOperator{\intr}{int}
\DeclareMathOperator{\Gr}{Gr}
\newtheorem{proposition}{Proposition}
\newtheorem{lemma}{Lemma}
\newtheorem{theorem}{Theorem}
\newtheorem{corollary}{Corollary}
\newtheorem{conjecture}{Conjecture}
\theoremstyle{definition}
\newtheorem{remark}{Remark}
\begin{document}

\title[Approximation by Random Polynomials and Rational functions]{Approximation by Random Complex Polynomials and Rational functions}

\author{S. St-Amant and J. Turcotte}

\address{Département de mathématiques et de statistique, Université de Montréal,
CP-6128 Centreville, Montréal,  H3C3J7, CANADA}
\email{simon.st-amant@umontreal.ca,jeremie.turcotte@umontreal.ca}

\color{black}

\begin{abstract}
We seek random versions of some classical theorems on complex approximation by polynomials and rational functions, as well as investigate properties of random compact sets in connection to complex approximation.
\end{abstract}

\maketitle

\section{Introduction}

The main objective was to generalize complex approximation theorems to the case of random functions. Our aim was to find results similar to those of Andrus and Brown in \cite{AB1984} and Istrăţescu and Onicescu in \cite{OI1978}, while attempting to be as general as possible but at the same time avoiding "almost everywhere" results as much as possible.

We have obtained results comparable to those in the preceding articles. Our main results are the following :
\begin{itemize}
    \item A generalization of Runge's theorem (Theorem \ref{Runge});
    \item A generalization of the Oka-Weil theorem (Theorem \ref{OkaWeil});
    \item The evaluation of a random function over a compact set is a random compact set (Theorem \ref{F sub f});
    \item The polynomially and rationally convex hulls of a random compact set are random compact sets (Theorem \ref{P-hull random} et \ref{R-hull random});
    \item The Siciak extremal function and the pluricomplex Green function of a random compact set are random functions (Theorem \ref{siciak});
    \item A useful convergence theorem, that states that from a weak form of convergence we may extract stronger convergence (Theorem \ref{convUnif}).
\end{itemize}

There are  two natural approaches to stochastic complex approximation. The first is to follow the proof of the deterministic case and prove that each step preserves measurability. This approach  is used to prove the first result in our list.

The second approach is to apply measurable selection theorems, as the classical theorems always tell us it is possible to approximate our functions, but not that this approximation is measurable. We wish  to underline here the importance of our convergence theorem. It shows (in the appropriate context) that pointwise convergence implies  joint (uniform) convergence. Therefore, the real difficulty in generalizing theorems is found in the measurability part of the proof, since only a rather weak convergence is necessary. It follows that if we had stronger measurable selection theorems, we would be able to prove stronger random versions of classical complex approximation theorems.

We have only scarcely looked at measure spaces, instead concentrating on measurable spaces. The former would be very interesting, but our approach guarantees generality and thus all results also apply to measure spaces (as they are measurable spaces).

This research was supported by a grant from NSERC (Canada) under the supervision of Paul M. Gauthier. We also thank Eduardo Zeron for helpful conversations.

%%%%%%%%%%%%%%%%%%%%%%%%%%%%%%%%%%%%%%%%%%%%%%%
%%%%%%%%%%%%%%%%%%%%%%%%%%%%%%%%%%%%%%%%%%%%

\section{Random functions}
By a measurable space $(Y,\mathcal A)$ we mean a set $Y$ with a 
$\sigma$-algebra 
$\mathcal A$ of subsets of $Y$ and the members of $\mathcal A$ are called the measurable sets of $Y.$ If $Y$ is a 
topological space,
we choose as $\sigma$-algebra  the family $\mathcal B$ of Borel subsets of $Y$ and consider the measurable space $(Y,\mathcal B).$ A function $f:Y\rightarrow Z$ from a measurable space  $(Y,\mathcal A)$ to a measurable space $(Z,\mathcal C)$ is said to be measurable if $f^{-1}(E)$ is measurable, for every measurable set $E$ in $Z.$  

As in \cite{AB1984}, if $(\Omega, \mathcal A)$ and $(Z, \mathcal B)$ are two measurable spaces and $X$ is an arbitrary non-empty set, then a function $f:\Omega\times X\rightarrow Z$ is  said to be a {\em random function} if the function $f(\cdot,x)$ is measurable for each $x\in X.$ Clearly, if  $f:\Omega\times X\rightarrow Z$ is a random function and $Y\subset X,$ then the restriction mapping  $f:\Omega\times Y\rightarrow Z$ is a random function. 

We remark that a random measurable function need not be measurable. That is, a function $f(\omega,x),$ such that $f(\cdot,x)$ and $f(\omega,\cdot)$ are both measurable, need not be measurable. To put it differently, a function which is separately measurable need not be jointly measurable. An example is given by Sierpinsiki (see \cite[p. 167]{Ru1987}). However, Neubrunn \cite{Ne} has shown the following result. Let $X$ be a separable metric space. Let $Y$ be an abstract set and $T$ a $\sigma$-algebra of subsets of $Y.$ Let $f(x,y)$ be a real function defined on $X\times Y$ such that for every fixed $y\in Y$ the function $f^y(x)=f(x,y)$ is continuous on $X$ and for fixed $x\in X$ the function $f_x(y)=f(x,y)$ is measurable in $(Y,T)$ 
 Then, the function $f(x,y)$ is measurable in the space $(X\times Y,S\times T).$ Thus, a function which is measurable in one variable and continuous in the other 
(Such functions are called Carathéodory functions.) is measurable.  
The same result holds for complex-valued functions.

%%%%%%%%%%%%%%%%%%%%%%%%%%%%%%%%%%%%%
%%%%%%%%%%%%%%%%%%%%%%%%%%%%%%%%%%%%%%%%%%%%%%%%%

\section{Random Holomorphic Functions; one variable}

If $U$ is an open set in $\C^n,$ we say that a function $F:\Omega\times U\rightarrow \C$ is a random holomorphic function on $U$ if $F(\omega,\cdot)$ is holomorphic for each $z\in U$ and $F(\cdot,z)$ is measurable for each $z\in U.$ 

If $E\subset \C^n,$ we say that a function $f:\Omega\times E\rightarrow \C$ is a random holomorphic function on $E$, if 
for every $\omega \in \Omega$,
there is an open neighbourhood $U$ of $E$ and a random holomorphic  function $F$ on $U$ such that $F(\omega,\cdot)=f(\omega, \cdot)$ on $E$.

Let $K$ be a compact set in $\C^n$ and $f$ be a 
random continuous function on $K$ which is holomorphic on the interior of $K$. We say that $f$ is in $R_\Omega(K)$ if there exists a sequence $r_j$ of random rational functions 
with no poles on $K$ such that, 
for every $\omega \in \Omega,$
 $r_j(\omega, \cdot) \rightarrow f(\omega, \cdot)$ uniformly .
Similarly, we say that $f$ is in $R_\Omega^{unif}(K)$ if for every $\varepsilon > 0$ there exists a random rational function $r$ with no poles on $K$
such that $\abs{r(\omega, z) - f(\omega, z)} < \varepsilon$ for all $(\omega, z)$ in $\Omega \times K$.

We have the following rather strong result stating that separately uniform convergence implies joint uniform convergence.

\begin{theorem} \label{weakConvUnif}
    $R_\Omega^{unif}(K) = R_\Omega(K).$
\end{theorem}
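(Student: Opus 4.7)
The inclusion $R_\Omega^{unif}(K) \subseteq R_\Omega(K)$ is essentially immediate: given $f \in R_\Omega^{unif}(K)$, choose for each $j \in \N$ a random rational function $r_j$ with no poles on $K$ realizing the bound $\varepsilon = 1/j$, and note that the resulting uniform-in-$(\omega,z)$ convergence implies the desired separately uniform convergence in $z$ for every fixed $\omega$.

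For the reverse inclusion, let $f \in R_\Omega(K)$ with approximating sequence $(r_j)$ and fix $\varepsilon > 0$. The plan is to partition $\Omega$ into measurable pieces on which one specific $r_j$ already gives uniform approximation within $\varepsilon$, then define the desired random rational function by gluing. Concretely, set
\[
    A_j = \Bigl\{\omega \in \Omega : \sup_{z \in K} \abs{r_j(\omega,z) - f(\omega,z)} < \varepsilon\Bigr\}.
\]
By hypothesis, for each $\omega$ the sequence $r_j(\omega, \cdot)$ converges uniformly on $K$ to $f(\omega,\cdot)$, so $\Omega = \bigcup_{j \geq 1} A_j$. Let $B_1 = A_1$ and $B_j = A_j \setminus (A_1 \cup \cdots \cup A_{j-1})$ for $j \geq 2$; these are disjoint with the same union. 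Define
\[
    r(\omega, z) = r_j(\omega, z) \quad \text{if } \omega \in B_j.
\]
By construction $\abs{r(\omega,z) - f(\omega,z)} < \varepsilon$ uniformly on $\Omega \times K$, and $r(\omega, \cdot)$ is a rational function with no poles on $K$ for every $\omega$. Measurability of $r(\cdot, z)$ for each fixed $z$ follows from $r(\cdot,z) = \sum_{j} \mathbf{1}_{B_j} \cdot r_j(\cdot, z)$, a countable sum of measurable functions, provided the sets $B_j$ are themselves measurable.

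The main obstacle is therefore proving that each $A_j$ is measurable, i.e.\ that the map
\[
    \omega \longmapsto \sup_{z \in K}\, \abs{r_j(\omega, z) - f(\omega, z)}
\]
is measurable. Joint measurability of $(\omega,z) \mapsto r_j(\omega,z) - f(\omega,z)$ is not directly given by the hypotheses, so I would instead exploit continuity in $z$. Since $r_j(\omega, \cdot)$ is continuous on $K$ (no poles) and $f(\omega,\cdot)$ is continuous on $K$ by assumption, the function $z \mapsto \abs{r_j(\omega,z) - f(\omega,z)}$ is continuous on the compact metric space $K$. Picking a countable dense set $D \subset K$, continuity yields
\[
    \sup_{z \in K}\, \abs{r_j(\omega,z) - f(\omega,z)} = \sup_{z \in D}\, \abs{r_j(\omega,z) - f(\omega,z)}.
\]
The right-hand side is a countable supremum of functions that are measurable in $\omega$ (since $r_j$ and $f$ are random functions), hence measurable. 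Consequently each $A_j$, and therefore each $B_j$, lies in $\mathcal A$, completing the construction of $r$ and the proof.
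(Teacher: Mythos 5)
Your proof is correct, but it takes a more elementary and self-contained route than the paper. The paper disposes of this theorem in one line by invoking Corollary \ref{unif}, whose proof runs through the general machinery of Section \ref{GeneralizedRandomFunctions}: generalized random functions on the graph of a (uniformly separable) random compact set, the weakly measurable multifunction $N(\omega)$ of admissible indices, and the Kuratowski--Ryll-Nardzewski selection theorem (Theorem \ref{krn}) to extract a measurable index function $\varphi(\omega)$. You instead work directly with the fixed compact set $K$, prove measurability of the sets $A_j$ by the same countable-dense-subset-plus-continuity device the paper uses to establish weak measurability of $N$, and then replace the appeal to Kuratowski--Ryll-Nardzewski by the explicit ``first index'' selection $B_j = A_j\setminus(A_1\cup\cdots\cup A_{j-1})$, which is exactly the measurable selection one would get for free when the target space is the discrete countable set $\N$. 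What your approach buys is transparency and independence from the selection theorem; what the paper's approach buys is uniformity, since the same argument simultaneously yields Theorem \ref{convUnif} for genuinely varying random compact sets $K(\omega)$ and arbitrary families $A(K)$, of which the present statement (constant $K$, $A(K)$ the rational functions without poles on $K$) is the simplest instance. One small point worth making explicit in your write-up: the fact that $\sup_{z\in K}=\sup_{z\in D}$ uses that $f(\omega,\cdot)$ is continuous on $K$, which is part of the definition of membership in $R_\Omega(K)$, so nothing is missing there.
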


\begin{proof}
    This is an application of Corollary \ref{unif} 
which can be found in Section \ref{GeneralizedRandomFunctions}. 
\end{proof}

It would thus be useful to have sufficient conditions  for a random function $f$ to be in  $R_\Omega(K).$ Let us call such a result a Runge theorem. 
We can formulate a first version of a Runge theorem.

\begin{theorem}\label{Runge 1}
Let $U$ be an open set in $\C$ and $f:\Omega\times U \rightarrow \C$ be a random holomorphic function. Let $K$ be a compact subset of $U$. 
Then, there exists a sequence $R_1, R_2, \ldots,$ of random rational functions with polls off 
$K,$ such that,  for each
$\omega\in\Omega,$ 
$$
	R_n(\omega,\cdot)\rightarrow f(\omega,\cdot) \quad \mbox{uniformly on} \quad K. 
$$
\end{theorem}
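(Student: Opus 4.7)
The plan is to follow the classical proof of Runge's theorem via the Cauchy integral formula, taking care that every geometric choice is deterministic (depending only on the fixed data $K$ and $U$), so that measurability in $\omega$ is preserved automatically and no measurable selection argument is required.

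First, fix a cycle $\gamma$ contained in $U \setminus K$ which has winding number $1$ around every point of $K$ and winding number $0$ around every point of $\C \setminus U$. Such a $\gamma$ can be produced by the standard square-grid construction applied to $K$ and $U$, and depends only on these two sets. For each $n \geq 1$, choose deterministic sample points $\zeta_{n,1}, \dots, \zeta_{n,m_n} \in \gamma$ and deterministic complex weights $c_{n,1}, \dots, c_{n,m_n}$ coming from a partition of $\gamma$ whose mesh tends to $0$, and set
\[
R_n(\omega, z) \;=\; \frac{1}{2\pi i} \sum_{j=1}^{m_n} \frac{c_{n,j}\, f(\omega, \zeta_{n,j})}{\zeta_{n,j} - z}.
\]
Then $R_n(\omega,\cdot)$ is a rational function of $z$ whose only poles lie at the $\zeta_{n,j} \in \gamma \subset U \setminus K$, i.e.\ off $K$. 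For each fixed $\omega$, the Cauchy integral formula gives $f(\omega, z) = \frac{1}{2\pi i}\int_\gamma \frac{f(\omega,\zeta)}{\zeta - z}\, d\zeta$ for every $z \in K$; since $\gamma \cap K = \emptyset$, the integrand is uniformly continuous on the compact set $\gamma \times K$, so the Riemann sums $R_n(\omega,\cdot)$ converge to $f(\omega,\cdot)$ uniformly on $K$.

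It remains to verify measurability, which is where one might imagine the difficulty lying. Because the $\zeta_{n,j}$ and $c_{n,j}$ are deterministic, for each fixed $z \in K$ the map $\omega \mapsto R_n(\omega,z)$ is a finite $\C$-linear combination of the measurable maps $\omega \mapsto f(\omega, \zeta_{n,j})$, hence is itself measurable; so $R_n$ is a random rational function. The main point to emphasize is conceptual rather than technical: one must resist the temptation to invoke Runge's theorem abstractly for each $\omega$ separately (which yields no control on the $\omega$-dependence of the approximants) and instead use the explicit Cauchy-integral Riemann-sum construction, which is measurable in $\omega$ for free precisely because $K$ and $U$ themselves do not depend on $\omega$.
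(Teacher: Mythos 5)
Your proposal is correct and follows essentially the same route as the paper: write $f(\omega,\cdot)$ on $K$ via the Cauchy integral over a deterministic contour in $U\setminus K$, approximate by Riemann sums with deterministic nodes and weights (so that each $R_n(\cdot,z)$ is a finite linear combination of the measurable maps $\omega\mapsto f(\omega,\zeta_{n,j})$), and get uniform convergence on $K$ from the uniform continuity of the integrand on the compact product of the contour with $K$. The only cosmetic difference is the description of the contour (grid cycle versus a union of polygonal boundaries of compact pieces covering $K$), which does not affect the argument.
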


\begin{proof}
We can cover $K$ by a finitely many disjoint compact sets $Q_1,\cdots,Q_n,$ such that each $Q_k$ is contained in $U$ and each $Q_k$ is bounded by finitely many disjoint polygonal curves. 
Set $\Gamma=\cup_j\partial Q_k.$ By the Cauchy formula, for each $\omega\in\Omega,$ 
$$
	f(\omega, z) = \frac{1}{2\pi i} \int_\Gamma \frac{f(\omega, \zeta)}{\zeta - z}\diff\zeta		
		\quad,\quad  \forall z\in K.
$$
For $\delta>0,$ partition $\Gamma$ into $N=N(\delta)$ segments $\Gamma_j$ of length smaller than $\delta$.  For each $\Gamma_j,$ denote by $\zeta_j$ the terminal point of $\Gamma_j.$ 
The Riemann sum 
\begin{equation*}
R(\omega, z) = \sum_{j=1}^{N(\delta)} \frac{1}{2\pi i} \frac{f(\omega, \zeta_j)}{\zeta_j - z} \int_{\Gamma_j} \diff\zeta =  \sum_{j=1}^{N(\delta)} \frac{a_j(\omega)}{\zeta_j-z}
\end{equation*}
is a random rational function.
Put
$$
	 \eta(\omega,\delta) = \max 
\left\{\frac{1}{2\pi}\left|\frac{f(\omega, \zeta)}{\zeta - z} - \frac{f(\omega, w)}{w - z}\right|:
	\zeta, w \in \Gamma, |\zeta-w|<\delta,  z\in K\right\}. 
$$
For all $(\omega,z)\in\Omega\times K,$ 
$$
	\abs{f(\omega,z) - R(\omega,z)} < \eta(\omega,\delta)\cdot L(\Gamma),
$$
where $L(\Gamma)$ is the length of $\Gamma.$ It follows from the uniform continuity of $f(\omega,\zeta)/(\zeta-z)$ on $\Gamma\times K,$ that if $\delta=\delta(\omega)$ is sufficiently small, then $\eta(\omega,\delta)<\varepsilon/L(\Gamma).$ Thus, 
$$
	\abs{f(\omega,z) - R(\omega,z)} < \varepsilon, \quad \forall z\in K. 
$$

Let $\{\delta_n\}_n$ be a sequence of positive numbers decreasing to zero and, for each $\delta_n,$  let $R_n$ be a random rational function corresponding as above to $\delta_n.$ Then, for each
$\omega\in\Omega,$ 
$$
	R_n(\omega,\cdot)\rightarrow f(\omega,\cdot) \quad \mbox{uniformly on} \quad K. 
$$

\end{proof}

However, the condition that $f(\omega, \cdot)$ be holomorphic on 
$U$ for all $\omega \in \Omega$ is quite strong and we would like to find a better version of Runge's theorem such as the following.

\begin{conjecture} \label{conjRunge}
Let $f(\omega,z)$ be a random holomorphic function on a compact set $K\subset \C.$ Then, there is a sequence $R_k(\omega,z)$ of random rational functions such that, for each $\omega,$ $R_k(\omega,\cdot)\rightarrow f(\omega,\cdot)$ uniformly on $K$. 
\end{conjecture}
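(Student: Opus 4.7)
The plan is to bypass any analog of the contour-and-Riemann-sum argument of Theorem \ref{Runge 1}, since the Cauchy contour would have to sit inside a domain of holomorphy that now varies with $\omega$; instead I would obtain random rational approximants by a measurable first-hit selection from a fixed countable family of candidates. Concretely, I would fix once and for all a countable enumeration $\{r_j\}_{j\ge 1}$ of all rational functions of the form $P(z) + \sum_{i,m} c_{i,m}(z-\alpha_i)^{-m}$ with $P$ a polynomial, coefficients in $\Q+i\Q$, and poles $\alpha_i \in (\Q+i\Q)\setminus K$; none of these have poles on $K$. The key property to verify is that $\{r_j|_K\}$ is sup-norm dense in the restrictions to $K$ of rational functions with poles off $K$, via partial fractions and the density of $(\Q+i\Q)\setminus K$ in the open set $\C\setminus K$, using uniform continuity of $(z,\alpha)\mapsto(z-\alpha)^{-m}$ for $z\in K$ and $\alpha$ in a compact subset of $\C\setminus K$.

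Combined with the classical Runge theorem applied to the holomorphic extension of $f(\omega,\cdot)$ furnished by the hypothesis, this ensures that for every $\omega$ and every $k\ge 1$ the set $\{j : \norm{r_j - f(\omega,\cdot)}_{C(K)} < 1/k\}$ is nonempty, so I would define
$$\tau_k(\omega) = \min\{j : \norm{r_j - f(\omega,\cdot)}_{C(K)} < 1/k\}, \qquad R_k(\omega,z) := r_{\tau_k(\omega)}(z).$$
Each $r_j$ is continuous on $K$, and so is $f(\omega,\cdot)$ since it extends holomorphically to a neighborhood of $K$; hence for any countable dense $D\subset K$,
$$\norm{r_j - f(\omega,\cdot)}_{C(K)} = \sup_{z\in D}\abs{r_j(z) - f(\omega,z)},$$
which is a countable supremum of functions measurable in $\omega$ (by the random-function hypothesis on $f$) and is therefore measurable. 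Consequently each $\{\tau_k = j\}$ is measurable, $R_k(\cdot,z) = \sum_j \mathbf{1}_{\{\tau_k = j\}}\,r_j(z)$ is measurable in $\omega$ for every $z$, and $R_k$ is a random rational function satisfying $\norm{R_k(\omega,\cdot) - f(\omega,\cdot)}_{C(K)} < 1/k$, proving the conjecture.

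The main obstacle I anticipate is establishing the sup-norm density of the countable family $\{r_j\}$: one must perturb the poles and coefficients of an arbitrary Runge approximant to nearby Gaussian-rational data while preserving uniform estimates on $K$. This step relies on the closedness of $K$, so that each pole off $K$ has positive distance from $K$ and can be moved to a nearby Gaussian-rational point that stays off $K$, together with compactness of $K$ that yields uniform control of the summands $(z-\alpha)^{-m}$. The measurable-selection portion is otherwise routine bookkeeping.
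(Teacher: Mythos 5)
The first thing to note is that the paper offers no proof of this statement: it is explicitly labelled a conjecture, and the authors indicate they were unable to prove it (their Theorem \ref{Runge 1} handles only the case of a single open set $U$ of holomorphy common to all $\omega$, precisely because the Cauchy contour there must not depend on $\omega$). Your strategy --- a fixed countable sup-norm-dense family of rational functions with Gaussian-rational data and poles off $K$, followed by the first-hit selection $\tau_k(\omega)=\min\{j:\|r_j-f(\omega,\cdot)\|_{C(K)}<1/k\}$ --- is sound in outline and is exactly the ``measurable selection'' route the introduction advertises, here in an elementary form that needs no Suslin sets and no Kuratowski--Ryll-Nardzewski theorem because $K$ is fixed and the candidate family does not vary with $\omega$. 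The density step is fine (each pole has positive distance to the compact set $K$, so it can be moved to a nearby Gaussian-rational point off $K$ with uniform control of $(z-\alpha)^{-m}$ on $K$), the nonemptiness of the hit sets follows from the classical Runge theorem applied to the $\omega$-wise holomorphic extension, and the conclusion you obtain is even stronger than asked: $\|R_k(\omega,\cdot)-f(\omega,\cdot)\|_{C(K)}<1/k$ uniformly in $\omega$, which would also yield Conjecture \ref{Runge} directly, without passing through Theorem \ref{weakConvUnif}.

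The one genuine gap is the parenthetical ``by the random-function hypothesis on $f$'': you need $f(\cdot,z)$ to be measurable for each fixed $z\in K$, and the paper's definition of a random holomorphic function on a compact set does not literally give you this. That definition only requires, for each individual $\omega_0$, the existence of some neighbourhood $U_{\omega_0}$ and some random holomorphic $F$ on $U_{\omega_0}$ agreeing with $f$ at the single slice $\omega=\omega_0$; since $F$ may change with $\omega_0$, no measurability of $\omega\mapsto f(\omega,z)$ follows. Taken literally, the conjecture is then false: on $\Omega=\{0,1\}$ with the trivial $\sigma$-algebra, the function $f(\omega,z)=\mathbf{1}_{\{0\}}(\omega)$ satisfies the definition (for each $\omega_0$ use the constant extension $F\equiv f(\omega_0,\cdot)$ on $U=\C$), yet $f(\cdot,z)$ is not measurable and hence cannot be a pointwise limit of the measurable functions $R_k(\cdot,z)$. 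So you must either add the slice-measurability of $f(\cdot,z)$ as an explicit hypothesis --- which is surely the intended reading of the word ``random,'' consistent with its use everywhere else in the paper --- or observe that it cannot be derived from the stated definition. With that hypothesis made explicit, your argument does establish the conjecture; without it, no argument can.
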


From Conjecture \ref{conjRunge} and Theorem \ref{weakConvUnif}, 
the following stronger version of Runge's theorem follows.

\begin{conjecture} \label{Runge}
    Let $f$ be a random holomorphic function on a compact set $K \subset \C$ such that, 
for all $\omega \in \Omega$
there exists an open set on which $f(\omega, \cdot)$ is holomorphic. Then, $f \in R_\Omega^{unif}(K)$.
\end{conjecture}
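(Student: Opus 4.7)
The plan is to derive Conjecture \ref{Runge} directly from Conjecture \ref{conjRunge} combined with Theorem \ref{weakConvUnif}, exactly as anticipated in the remark preceding the statement. The hypotheses of Conjecture \ref{Runge} are precisely those of Conjecture \ref{conjRunge}: the requirement that for each $\omega$ there exist an open set on which $f(\omega, \cdot)$ is holomorphic is in fact already built into the definition of \emph{random holomorphic function on a compact set} given in Section 2, so the statement reduces to invoking the two named results in succession.

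Applying Conjecture \ref{conjRunge} yields a sequence $(R_k)_k$ of random rational functions such that, for each $\omega \in \Omega$, $R_k(\omega, \cdot) \to f(\omega, \cdot)$ uniformly on $K$. For this to furnish membership in $R_\Omega(K)$, the $R_k$ must have no poles on $K$. This is automatic up to a measurable reindexing: uniform convergence on $K$ to the continuous (hence bounded) function $f(\omega, \cdot)$ forces $R_k(\omega, \cdot)$ to be eventually bounded on $K$, so poles on $K$ occur only for finitely many $k$ (depending on $\omega$). Selecting, for each $n$, the least index $k$ with $\sup_{z \in K}\abs{R_k(\omega, z) - f(\omega, z)} < 1/n$ — a measurable function of $\omega$, since the supremum is a random variable — produces a sequence of random rational functions with no poles on $K$ that still converges uniformly on $K$ for each $\omega$. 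This shows $f \in R_\Omega(K)$, and Theorem \ref{weakConvUnif} then gives $f \in R_\Omega^{unif}(K)$.

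The principal obstacle is not in this deduction, which is essentially bookkeeping, but in the two inputs. Theorem \ref{weakConvUnif} conceals the nontrivial upgrade from separately uniform to jointly uniform convergence, while Conjecture \ref{conjRunge} is the genuinely open piece: the difficulty there is to extend the construction in Theorem \ref{Runge 1} to the situation where the open set $U$ on which $f(\omega, \cdot)$ is holomorphic is only guaranteed to exist for each fixed $\omega$, and may vary with $\omega$. A full proof would almost certainly require a measurable selection theorem to choose such neighborhoods $U_\omega$, and thereby the polygonal contours and partitions feeding into the Cauchy-formula Riemann sums, in an $\omega$-measurable fashion, so that the resulting approximants remain random rational functions.
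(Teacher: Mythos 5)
Your derivation follows exactly the route the paper itself indicates: Conjecture \ref{Runge} is obtained by feeding the output of Conjecture \ref{conjRunge} into Theorem \ref{weakConvUnif}, and your extra bookkeeping (measurably reindexing to discard the finitely many approximants with poles on $K$, using that the supremum over a countable dense subset of $K$ is measurable) correctly bridges the small gap between the conclusion of Conjecture \ref{conjRunge} and the definition of $R_\Omega(K)$. As you rightly emphasize, this is only a conditional argument --- the statement remains a conjecture in the paper precisely because Conjecture \ref{conjRunge} is unproven --- so your reduction matches the paper's intent and adds no gap beyond the one the paper itself acknowledges.
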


%%%%%%%%%%%%%%%%%%%%%%%%%%%%%%%%%%%%%%%%%%%%%%

\section{Random Holomorphic Functions; several variables}

If  $U$ is an open subset of $\C^n,$ and $(\Omega, \mathcal A)$ is a measurable space, a random function $f:\Omega\times U\rightarrow \C$ is a random holomorphic function on $U,$ if $f(\omega,\cdot)$ is holomorphic, for each $\omega\in \Omega.$ As for random smooth functions (see Section \ref{annex}), for $\alpha \in \N^n$, we denote by $f^{(\alpha)}$ the function defined for every fixed $\omega \in \Omega$ as $f(\omega, \cdot)^{(\alpha)}$. However, the derivative is now meant complex. 

\begin{lemma}
If $f$ is a random holomorphic funtion, then, for each muli-index $\alpha,$ the function $f^{(\alpha)}$ is also a random holomorphic function.
\end{lemma}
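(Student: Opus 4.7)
The plan is to verify the two defining properties of a random holomorphic function for $f^{(\alpha)}$. The first—that $f^{(\alpha)}(\omega,\cdot)$ is holomorphic on $U$ for each fixed $\omega$—is immediate, since each partial derivative of a holomorphic function is again holomorphic on the same open set. So the real content is to show that $\omega \mapsto f^{(\alpha)}(\omega,z)$ is measurable for every fixed $z \in U$.

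The approach I would use is the standard difference-quotient trick combined with induction on $|\alpha|$. The base case $|\alpha|=0$ is the hypothesis itself. For the inductive step, pick an index $j$ with $\alpha_j > 0$ and set $\beta = \alpha - e_j$, where $e_j$ is the $j$-th standard basis vector of $\N^n$. By the inductive hypothesis, $f^{(\beta)}$ is a random holomorphic function. Since $f^{(\beta)}(\omega,\cdot)$ is holomorphic, and hence in particular complex differentiable in the $z_j$ variable at $z$, we have
\begin{equation*}
f^{(\alpha)}(\omega, z) \;=\; \frac{\partial f^{(\beta)}}{\partial z_j}(\omega, z) \;=\; \lim_{k \to \infty} k\,\bigl(f^{(\beta)}(\omega, z + e_j/k) - f^{(\beta)}(\omega, z)\bigr),
\end{equation*}
where the limit is taken along $k \in \N$ large enough that $z + e_j/k \in U$. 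For each such $k$, the map $\omega \mapsto k(f^{(\beta)}(\omega, z + e_j/k) - f^{(\beta)}(\omega, z))$ is a $\C$-linear combination of two measurable $\C$-valued functions, hence measurable. A pointwise limit of measurable $\C$-valued functions is measurable, so $\omega \mapsto f^{(\alpha)}(\omega,z)$ is measurable, completing the induction.

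I do not expect any real obstacle: the one point deserving a sentence of care is that complex measurability is preserved under pointwise limits (which reduces to the real and imaginary parts separately), and that the difference quotient genuinely converges for each $\omega$ because $f^{(\beta)}(\omega,\cdot)$ is holomorphic by the inductive hypothesis. An alternative route, if one prefers to avoid the induction, would be to apply the multivariate Cauchy integral formula on a small polytorus around $z$ and approximate by Riemann sums—each sum being a measurable linear combination of values $f(\cdot,\zeta_i)$—but the difference-quotient argument is shorter and cleaner.
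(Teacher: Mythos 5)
Your proposal is correct and follows essentially the same route as the paper, which reduces the measurability claim to the difference-quotient argument of its Lemma \ref{smoothlemma} (induction on $|\alpha|$, measurability of each quotient $k(f^{(\beta)}(\cdot,z+e_j/k)-f^{(\beta)}(\cdot,z))$, and closure of measurability under pointwise limits via Proposition \ref{limsmooth}). The only cosmetic difference is that the paper phrases this by citation rather than writing the induction out.
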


\begin{proof}
The proof is essentially the same as for Lemma \ref{smoothlemma} since the holomorphy of $f$ ensures that the differential quotient tends to $f(\omega,z)^{(\alpha)}$ and the measurability of $f^{(\alpha)}$ is guaranteed by Proposition \ref{limsmooth} in the annex.
\end{proof}

A sequence $f_j, j\in \N$ of functions, defined on an open set $\Omega\subset \C^n,$
is said to converge compactly if it converges uniformly on compact subsets. A sequence $f_j, j\in \Z$ is said to converge compactly if the sequences $f_j, j=0,1,\ldots,$ and $f_j, j=-1,-2,\ldots$ converge compactly. 
If a sequence $f_j, j\in \Z,$ converges compactly, we set 
$$
	\lim_{j\rightarrow \infty}f_j = \lim_{j\rightarrow-\infty}f_j + \lim_{j\rightarrow+\infty}f_j.
$$
It is then clear how to define the compact convergence of series of functions $\sum_{j=0}^{+\infty} f_j$ and $\sum_{j=-\infty}^{+\infty}f_j.$

For $n\in\N,$ we say that a  multiple series of the form 
$$	
	\sum_{\alpha\in \Z^n}a_\alpha, \quad  a_\alpha\in \C 
$$
converges if it converges absolutely. If $f_\alpha, \alpha\in \Z^n,$ is an indexed family of functions in an open set $\Omega\subset\C^n,$ we shall say that the multiple 
series $\sum_\alpha f_\alpha$ converges compactly to a function $f$ in $\Omega$ if it is pointwise convergent to $f$ and  for every compact set $K\subset \Omega$ and for every $\varepsilon>0,$ there is a finite set of indices $\Lambda\subset\Z^n,$ such that 
$$
	|f(z)-\sum_{\alpha\not\in\Lambda} f_\alpha(z)|<\varepsilon, \quad  \forall z\in K.
$$  

We shall say that a formal Laurent series 
$$
	\sum_{\alpha\in \Z^n}a_\alpha z^\alpha, \quad  z\in \Omega
$$
converges normally in $\Omega$ if for every compact set $K\subset \Omega$ and for every $\varepsilon>0,$ there is a finite set of indices $\Lambda\subset\Z^n,$ such that 
$$
	\sum_{\alpha\not\in\Lambda} |a_\alpha z^\alpha|<\varepsilon, \quad  \forall z\in K.
$$  
If a formal Laurent series converges normally, then every arrangement converges compactly in $\Omega$ to a unique limit $f.$  It follows that the series converges compactly to $f.$ 
It also follows that the homogeneous series 
$$
	\sum_{m=-\infty}^{+\infty}P_m(z), \quad \mbox{where}  \quad 
		 P_m(z)=\sum_{|\alpha|=m}a_\alpha z^\alpha,
$$
converges compactly to $f$ in $\Omega.$ 
For the basic properties of such multiple series, see  \cite[Sections I.1.5 and II.1.2 ]{RA1986}.

An expression of the form 
$$	
	\sum_{\alpha}a_\alpha(\omega)z^\alpha,
$$
where $\alpha=(\alpha_1,\ldots,\alpha_n), \alpha_j\in \N\cup\{0\},$ is a random power series, if each $a_\alpha$ is measurable.

\begin{theorem}\label{Taylor}
    If $f$ is a random holomorphic function on the  polydisc $\D^n=\{z: |z_j|< 1, j=1,\dots, n\},$ then $f$ has a representation as a sum of a random power series
$$
	f(\omega,z) = \sum_{\alpha}a_\alpha(\omega)z^\alpha.
$$
\end{theorem}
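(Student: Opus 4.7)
My plan is to use the classical Taylor expansion pointwise in $\omega$ and package the measurability of the coefficients into the preceding lemma on derivatives of random holomorphic functions. For each fixed $\omega\in\Omega$, the function $f(\omega,\cdot)$ is holomorphic on the polydisc, and so classical multivariable complex analysis (e.g.\ iterated Cauchy integration) yields a representation
$$
 f(\omega,z) = \sum_{\alpha} a_\alpha(\omega)\, z^\alpha, \qquad z\in \D^n,
$$
which converges normally, hence compactly, on $\D^n$, with coefficients $a_\alpha(\omega) = \tfrac{1}{\alpha!}\, f^{(\alpha)}(\omega,0)$. The analytic content of the theorem is thus entirely handled by the deterministic theory on the polydisc, and nothing needs to be proved here about convergence beyond citing that standard fact.

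The only genuinely random ingredient is verifying that each coefficient $a_\alpha\colon\Omega\to\C$ is measurable, so that $\sum_\alpha a_\alpha(\omega)z^\alpha$ is a random power series in the sense defined just above the theorem. Here I appeal directly to the lemma immediately preceding the theorem: since $f$ is random holomorphic, so is each derivative $f^{(\alpha)}$. By the very definition of a random function, the slice $\omega\mapsto f^{(\alpha)}(\omega,0)$ is then measurable, and therefore $a_\alpha = \tfrac{1}{\alpha!}\, f^{(\alpha)}(\cdot,0)$ is measurable.

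I do not expect any substantive obstacle: the analytic step is classical and the measurability of the coefficients has been reduced to a single invocation of the derivative lemma. Should one wish to bypass that lemma, an alternative would be to write, for some $r\in(0,1)$,
$$
 a_\alpha(\omega) = \frac{1}{(2\pi i)^n}\int_{|\zeta_1|=r}\!\!\!\cdots\!\!\int_{|\zeta_n|=r} \frac{f(\omega,\zeta)}{\zeta_1^{\alpha_1+1}\cdots\zeta_n^{\alpha_n+1}}\, d\zeta_1\cdots d\zeta_n,
$$
and approximate by Riemann sums; each such sum is a finite $\C$-linear combination of the measurable maps $\omega\mapsto f(\omega,\zeta_k)$, so is measurable, and $a_\alpha$ then inherits measurability as a pointwise limit. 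Either route yields the conclusion.
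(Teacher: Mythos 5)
Your proposal is correct and follows essentially the same route as the paper: the deterministic Taylor expansion is taken for granted for each fixed $\omega$, and measurability of $a_\alpha = f^{(\alpha)}(\cdot,0)/\alpha!$ is obtained from the preceding lemma on derivatives of random holomorphic functions. Your alternative via the Cauchy integral and Riemann sums is the mechanism the paper itself uses later for Laurent coefficients (Proposition \ref{randomInt}), so it is a valid but unnecessary detour here.
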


\begin{proof}
The proof follows directly from the previous lemma, since
\begin{equation*}
    a_\alpha(\omega) = \frac{f(\omega,0)^{(\alpha)}}{\alpha!} \quad 
		\mbox{with} \quad \alpha! = \alpha_1\cdots\alpha_n
\end{equation*}
and the $a_\alpha$ are measurable by the measurability of $f(\cdot,0)^{(\alpha)}$.
\end{proof}

If $f$ is holomorphic in a polydisc 
$$
	D(a,r) = \{z\in\C^n:|z_j-a_j|<r_j, \, j=1,\ldots,n\},
$$ 
then the Taylor expansion of $f$ at $a$ converges to $f$ normaly on $D(a,r).$ Denote by 
$$
	P_m(z-a) = \sum_{|\alpha|=m}a_\alpha (z-a)^\alpha,
$$
the sum of all terms of degree $m$ in the Taylor expansion. Then, $f$ has the  {\em homogeneous expansion}
$$
	f(z) = \sum_{m=0}^\infty P_m(z-a), 
$$
in terms of the m-homogeneous polynomials $P_m$ and this expansion converges normaly in $D(a,r).$

If $f$ is holomorphic in the unit ball  $\B^n$ of $\C^n,$ then the Taylor expansion of $f$ about the origin converges normaly to $f$  on $\B^n.$  Indeed, for every compact subset $K\subset \B^n,$  there are finitely many open polydiscs $D_j, j=1,\ldots,k$ centered at $0,$ and contained in  $\B^n,$ such that $K\subset D_1\cup\cdots \cup D_k.$ Again, by compactness, there is a $0<\lambda<1,$ such that $K$ is contained in the compact set 
$$
	Q= \bigcup_{j=1}^k\lambda \overline D_j  = 
	\bigcup_{j=1}^k(K\cap\lambda \overline D_j)  =  				  	\bigcup_{j=1}^kK_j.
$$  
For each $j,$ the homogeneous expansion convrges normaly on $D_j,$ so the homogeneous expansion converges uniformly on $K_j.$ It follows that the homogeneous expansion converges uniformly on $Q$ and hence on $K.$ Combining this with Theorem \ref{Taylor}, and since partial sums of the homogeneous expansion are polynomials, we have the following result.

\begin{theorem}
Let $f(\omega,z)$ be a random holomorphic function on $U$, where $U$ is an open polydisc $|z_j|<r_j, j=1,\ldots,m,$ or an open ball $\|z\|<r.$ Then,  the partial sums $p_m(\omega,z), m=0,1,\ldots,$ of the random homogeneous expansion of $f$  consititute a sequence of random polynomials which, for every $\omega$ and  for every compact $K\subset U,$ converges uniformly on $K$ to $f(\omega,\cdot).$ 
\end{theorem}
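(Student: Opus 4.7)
The plan is to assemble the ingredients that the excerpt has already placed on the table: the measurability of the Taylor coefficients from Theorem~\ref{Taylor} (together with the lemma about derivatives of random holomorphic functions), the classical fact that the Taylor expansion of a holomorphic function on a polydisc converges normally, and the covering argument sketched in the paragraph preceding the statement for the ball case. The essential observation is that once the coefficients $a_\alpha(\omega)$ are measurable, measurability of the partial sums $p_m(\omega,\cdot)$ is automatic because they are polynomials in $z$ whose coefficients depend measurably on $\omega$; the remaining content is the deterministic convergence statement for each fixed $\omega$.

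First I would handle the polydisc case $U=\{z:|z_j|<r_j\}$. After a harmless rescaling $z_j\mapsto r_j z_j$ (which preserves random holomorphy), this reduces to $U=\D^n$, and Theorem~\ref{Taylor} gives the random power series representation $f(\omega,z)=\sum_\alpha a_\alpha(\omega)z^\alpha$ with each $a_\alpha$ measurable. Grouping terms by total degree, set
$$
P_m(\omega,z)=\sum_{|\alpha|=m}a_\alpha(\omega)z^\alpha,\qquad p_m(\omega,z)=\sum_{k=0}^m P_k(\omega,z).
$$
For each $m$, $p_m(\cdot,z)$ is a finite sum and product of measurable functions, hence measurable; and $p_m(\omega,\cdot)$ is a polynomial. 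So the $p_m$ are random polynomials. For fixed $\omega$, $f(\omega,\cdot)$ is holomorphic on $\D^n$, so its Taylor series converges normally on $\D^n$, whence (by the discussion of normally convergent Laurent series in the excerpt) the homogeneous expansion converges compactly to $f(\omega,\cdot)$. That gives uniform convergence of $p_m(\omega,\cdot)$ to $f(\omega,\cdot)$ on every compact $K\subset \D^n$.

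For the ball case $U=\B^n=\{\|z\|<r\}$, I would use the covering argument already displayed in the text: given a compact $K\subset \B^n$, one finds finitely many open polydiscs $D_1,\ldots,D_k$ centered at $0$, contained in $\B^n$, covering $K$, and a scalar $0<\lambda<1$ such that $K\subset Q=\bigcup_j \lambda\overline{D_j}\subset \B^n$. Since each $\lambda\overline{D_j}$ is compact in $D_j$, and the homogeneous expansion about $0$ converges normally, hence uniformly, on each $\lambda\overline{D_j}$, it converges uniformly on the finite union $Q$, and therefore on $K$. Measurability of the coefficients is obtained exactly as before: by the lemma, each derivative $f^{(\alpha)}$ is a random holomorphic function, so $a_\alpha(\omega)=f^{(\alpha)}(\omega,0)/\alpha!$ is measurable as a pointwise evaluation of a measurable function.

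There is no real obstacle here: all non-trivial ingredients have been proved earlier in the paper. The only point demanding care is to verify that the rescaling in the polydisc case, and the passage from the cover by polydiscs to the compact $K\subset\B^n$, preserves both measurability in $\omega$ and uniform convergence in $z$; both are immediate because the coefficients $a_\alpha(\omega)$ do not depend on the compact set being considered, and the convergence statement is deterministic for each fixed $\omega$.
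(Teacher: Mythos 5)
Your proposal is correct and follows essentially the same route as the paper, which simply cites the preceding discussion (normal convergence of the Taylor series, the covering argument for the ball, Theorem~\ref{Taylor} for measurability of the coefficients) together with the fact that linear combinations of measurable functions are measurable. Your write-up merely makes explicit the rescaling for a general polydisc and the grouping of the power series into homogeneous parts, both of which are harmless fill-ins of the paper's one-line argument.
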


This theorem follows directly from the previous discussion and the fact that a linear combination of measurable functions is measurable.

\begin{lemma}\label{homogeneous unique}
Suppose
$$
	\sum_{m=0}^\infty P_m(w) = \sum_{m=0}^\infty Q_m(w),
$$
for all $w$ in some neighbourhood of $0,$ where $P_m$ and $Q_m$ are $m$-homogeneous polynomials. 
Then, $P_m=Q_m, \, m=0,1,\, \ldots.$ 
Suppose, in particular, that $\sum P_m$ is the homogeneous expansion of a function $f$ holomorphic in a neighbourhood of $0.$ Let $z=R(w)$ be a linear change of coordinates and let $\sum Q_m$ be the homogeneous expansion of $f\circ R^{-1}.$ Then, $Q_m=P_m\circ R, \, m=0,1,\, \ldots.$ 
\end{lemma}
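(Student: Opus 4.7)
My plan is to first establish the uniqueness of the homogeneous expansion by reducing to a one-variable statement via scaling, and then deduce the change-of-variables statement as an immediate corollary.

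For the uniqueness claim, I fix an arbitrary $w \in \C^n$. Since $0$ lies in the open neighborhood $U$ on which the two series agree, there exists $r > 0$ such that $tw \in U$ for all complex $t$ with $|t| < r$. Substituting $tw$ in place of $w$ in the hypothesis and invoking $m$-homogeneity, so that $P_m(tw) = t^m P_m(w)$ and similarly for $Q_m$, yields
$$
\sum_{m=0}^\infty P_m(w)\, t^m \;=\; \sum_{m=0}^\infty Q_m(w)\, t^m \qquad (|t|<r),
$$
an equality of two convergent one-variable power series in the single parameter $t$. The classical uniqueness of Taylor coefficients then forces $P_m(w) = Q_m(w)$ for every $m \geq 0$. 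Since $w$ was arbitrary, $P_m \equiv Q_m$ as polynomials.

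For the second part, I start from the homogeneous expansion $f(w) = \sum_m P_m(w)$, valid on some polydisc about $0$. Substituting $w = R^{-1}(z)$, which is itself a linear change of variables, gives
$$
(f\circ R^{-1})(z) \;=\; \sum_{m=0}^\infty P_m\bigl(R^{-1}(z)\bigr)
$$
on a corresponding neighborhood of $0$ in the $z$-coordinates. Since $R^{-1}$ is linear, each $P_m \circ R^{-1}$ remains an $m$-homogeneous polynomial in $z$. Comparing this with the given homogeneous expansion $\sum_m Q_m(z)$ of $f \circ R^{-1}$ and applying the uniqueness proved in the first step yields $Q_m = P_m \circ R^{-1}$, matching the stated conclusion once one fixes the notational convention for which of $R$ or $R^{-1}$ denotes the substitution map.

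The only subtle point I anticipate is justifying the scaling step: the series $\sum_m t^m P_m(w)$ must converge as a one-variable series on a disc in $t$. This is automatic, however, because convergence of $\sum_m P_m(tw)$ at each $tw \in U$ is part of the hypothesis, so no stronger form of convergence (normal, uniform) needs to be invoked. Everything else reduces to elementary linear algebra together with the observation that the composition of an $m$-homogeneous polynomial with a linear map is again $m$-homogeneous.
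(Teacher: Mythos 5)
Your proof is correct and uses essentially the same key idea as the paper: restrict the identity to a complex line $t\mapsto tw$ through the origin, pull out $t^m$ by homogeneity, and invoke one-variable uniqueness of power series coefficients. The only differences are cosmetic and in your favour --- the paper fixes $\theta$ on the unit sphere and then needs the maximum principle to pass from $\partial B$ to all of $\C^n$, whereas your direct scaling of an arbitrary $w$ into the neighbourhood makes that step unnecessary; you also actually write out the change-of-coordinates claim (which the paper's proof omits entirely) and correctly observe that the clean conclusion is $Q_m = P_m\circ R^{-1}$, the paper's ``$P_m\circ R$'' being a slip of convention.
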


\begin{proof}
We may assume that the equality holds on the unit ball $\overline B.$ Fix some point $\theta\in\partial B.$ Then, for each complex number $\lambda, |\lambda|\le 1,$ 
$$	
	\sum_m P_m(\theta)\lambda^m=\sum_mP_m(\lambda\theta)=
	\sum_mQ_m(\lambda\theta)=\sum_m Q_m(\theta)\lambda^m.
$$
Therefore, $P_m(\theta)=\Q_m(\theta),$ for all $m.$ Since, $\theta\in\partial B$ was arbitrary, we have that $P_m=Q_m$ on $\partial B.$ By the maximum principle, $P_m=Q_m$ in $B$ and hence everywhere. 
 
\end{proof}

\begin{theorem}\label{lambda z}
Let $f(\omega,z)$ be a random holomorphic function in $U$, where $U$ is an open set with the property that if $z\in U$, then $\lambda z\in U$ for all $|\lambda|\leq 1$. Then, there is a sequence $p_m(\omega,z), m=0,1,\ldots,$ of random polynomials such that, for every $\omega,$  the sequence $p_m(\omega,\cdot)$ converges compactly on $U$ to $f(\omega,\cdot).$ 
\end{theorem}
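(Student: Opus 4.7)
The plan is to let $p_m$ be the partial sums of the homogeneous expansion of $f$ at the origin, and to use the balancedness of $U$ to extend the convergence, which is classical near $0$, to all of $U$.

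Since $U$ is open and contains $0$ (apply the hypothesis with $\lambda = 0$ to any $z \in U$), there is a polydisc centred at the origin contained in $U$. Applying Theorem \ref{Taylor} on this polydisc, after a trivial rescaling of coordinates, provides measurable coefficients $a_\alpha(\omega)$ with $f(\omega, z) = \sum_\alpha a_\alpha(\omega) z^\alpha$ near $0$. Grouping by total degree, set
$$P_m(\omega, z) = \sum_{|\alpha| = m} a_\alpha(\omega) z^\alpha, \qquad p_N(\omega, z) = \sum_{m=0}^{N} P_m(\omega, z).$$
Each $p_N$ is a random polynomial, since for every fixed $z$ the function $p_N(\cdot, z)$ is a finite linear combination, with constant complex coefficients $z^\alpha$, of the measurable functions $a_\alpha$.

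It remains to verify that $p_N(\omega, \cdot) \to f(\omega, \cdot)$ uniformly on every compact $K \subset U$, for each fixed $\omega$. Since $K$ is compact and $U$ is open there exists $\rho > 1$ with $\rho K \subset U$, and by balancedness the compact set
$$\widetilde K = \{\lambda z : |\lambda| \le \rho,\, z \in K\}$$
also lies in $U$; indeed any element can be written as $(\lambda/\rho)(\rho z)$ with $\rho z \in U$ and $|\lambda/\rho| \le 1$. For each fixed $z \in K$, the map $\lambda \mapsto f(\omega, \lambda z)$ is holomorphic on an open neighbourhood of the closed disk $\{|\lambda| \le \rho\}$, and by substituting $w = \lambda z$ in the multivariate series at $0$ its Taylor expansion at $\lambda = 0$ is exactly $\sum_m P_m(\omega, z) \lambda^m$. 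Cauchy's estimate on $|\lambda| = \rho$ therefore gives
$$|P_m(\omega, z)| \le \frac{\sup_{w \in \widetilde K} |f(\omega, w)|}{\rho^m},$$
a summable geometric bound whose right-hand side is finite by continuity of $f(\omega, \cdot)$ on $\widetilde K$ and independent of $z \in K$. Evaluating the one-variable series at $\lambda = 1$ gives $f(\omega, z) = \sum_m P_m(\omega, z)$, and the uniform bound then produces the required uniform convergence on $K$.

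Measurability is essentially handed to us by Theorem \ref{Taylor}, so the substantive step is the dilation argument used to control the Taylor coefficients. This is a short compactness argument and poses no serious obstacle; the only subtlety is guaranteeing $\widetilde K \subset U$, which is precisely where the hypothesis ``$\lambda z \in U$ for all $|\lambda| \le 1$'' is needed.
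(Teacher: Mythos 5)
Your proof is correct, but it follows a genuinely different route from the paper's. The paper works locally: for each $a\in U\setminus\{0\}$ it applies a unitary change of coordinates $R_a$ sending $a$ to $(|a|,0,\ldots,0)$, expands $f(\omega,R_a^{-1}(\cdot))$ homogeneously on a polydisc, invokes the uniqueness of homogeneous expansions (Lemma \ref{homogeneous unique}) to identify all of these local expansions with the single expansion of $f$ at $0$, and finally covers a compact $K$ by finitely many neighbourhoods $V_a$ to get uniformity. You instead work globally from the single expansion at the origin, slice along complex lines via $g_z(\lambda)=f(\omega,\lambda z)$, use the dilation $\rho K\subset U$ (justified by $d(K,\partial U)>0$) together with balancedness to make $g_z$ holomorphic past the closed disc of radius $\rho>1$, and read off $\abs{P_m(\omega,z)}\le \sup_{\widetilde K}\abs{f(\omega,\cdot)}\,\rho^{-m}$ from the Cauchy estimates; one\hyphenation{dimensional} uniqueness of Taylor coefficients identifies the slice coefficients with $P_m(\omega,z)$, so no multivariable uniqueness lemma is needed. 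Your argument is the classical one for balanced domains: it is more self-contained (it bypasses Lemma \ref{homogeneous unique} and the rotation/covering machinery) and it is quantitative, yielding a geometric rate $\sum_{m>N}M\rho^{-m}$ of uniform convergence on $K$; the paper's version buys consistency with its polydisc theorem and its uniqueness lemma, which it reuses elsewhere. The only points in your write-up that deserve an explicit word are the existence of $\rho>1$ with $\rho K\subset U$ (a one-line consequence of the positive distance from $K$ to $\partial U$ and the boundedness of $K$) and the rearrangement of the absolutely convergent multivariate Taylor series into $\sum_m P_m(\omega,z)\lambda^m$ near $\lambda=0$; both are standard and do not affect correctness.
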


\begin{proof} 
Let $U$ be an open subset of $\C^n$ such that, for each $a\in U,$ we also have $\lambda a\in U,$ for each $\lambda\in\C$ with $|\lambda|\le 1.$  
Let $f(\omega,w)$ be a  random holomorphic function in $U$ and fix a point
$a\in U\setminus \{0\}$ 
and $\varepsilon>0.$  Let $R_a$ be a unitary transformation such that $R_a(a)=(|a|,0,\ldots,0).$ 
There is a polydisc $D_a$ centered at $0$ and containing $(|a|,0,\ldots,0),$ such that $R_a^{-1}(D_a)\subset U.$  It is easy to see that $f(\omega,R_a^{-1}(z))$ is a random holomorphic function on $D_a$ and hence 
has a random homogeneous expansion
$$
	f(\omega,R_a^{-1}(z)) = \sum_{m=0}^{\infty}P_m(\omega,z), \quad z\in D_a,
$$
which, for each $\omega,$ converges uniformly on $D_a.$ Thus,
\begin{equation}\label{a expansion}
	f(\omega,w) = \sum_{m=0}^{\infty}P_m(\omega,R_a(w)), \quad w\in R_a^{-1}(D_a)
\end{equation}
converges, for each $\omega,$ uniformly on the open neighbourhood  $R_a^{-1}(D_a)$ of $a.$ Hence, there is an $M=M_{\omega,\varepsilon},$ such that for 
$$
	P_a(\omega,w) =  \sum_{m=0}^MP_m(\omega,R_a(w)), \quad w\in R_a^{-1}(D_a),
$$
we have  that $|f(\omega,w)-P_a(\omega,w)|<\varepsilon,$ for all $w\in  R_a^{-1}(D_a).$ 

Since homogeneous expansions are unique (Lemma \ref{homogeneous unique}), each of the expansions (\ref{a expansion}) is the random homogeneous expansion 
$$
	f(\omega,w) =  \sum_{j=0}^{\infty}Q_j(\omega,w)
$$
of $f(\omega,w)$ at $0.$ 

Let $p_m(\omega,w)$ be the $m$-th partial sum of this homogeneous expansion and
fix $\omega.$ We have shown that 
for each $a\in U,$ we have a neighbourhood $V_a$ and an $m=m_a$ such that  $|f(\omega,w)-p_m(\omega,w)|<\varepsilon,$ for all $w\in V_a.$ 
Therefore, if $K$ is a compact subset of $U,$ we can cover $K$ by finitely many such open sets $V_1,\ldots, V_k$ and conclude that there is an $m=m_K$ such that  $|f(\omega,w)-p_m(\omega,w)|<\varepsilon,$ for all $w\in K.$

\end{proof}

We recall that a domain (connected open set)  $U\subset\C^n$ is a connected Reinhardt domain if, for every $z=(z_1,\ldots,z_n)\in U$ all points $(e^{i\theta_1}z_1,\ldots,e^{i\theta_n}z_n)$, with  $\theta_1,\ldots,\theta_n\in\R,$ are also in $U.$ If $U$ satisfies the hypotheses of the previous theorem, then $U$ is a Reinhhardt domain. 
\color{black}
We now generalize the previous theorem to Laurent series. For an introduction to Laurent series, see \cite{K, PW, RA1986}.

\begin{theorem}\label{Laurent}
Let $f(\omega,z)$ be a random holomorphic function in $U$, where $U$ is a connected Reinhardt domain. Then, f has a unique expansion as a random Laurent series and for every $\omega,$ the series converges normaly on $U$ to $f(\omega,\cdot).$ 
\end{theorem}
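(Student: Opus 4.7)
The plan is to apply the classical Laurent expansion theorem on connected Reinhardt domains pointwise in $\omega$ to obtain the expansion, and then verify measurability of the coefficients so that the expansion qualifies as a \emph{random} Laurent series in the sense defined above.

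For each fixed $\omega\in\Omega$, the function $f(\omega,\cdot)$ is holomorphic on the connected Reinhardt domain $U$, so by the classical theory (see \cite{RA1986}) it admits a unique Laurent expansion
$$f(\omega,z)=\sum_{\alpha\in\Z^n}a_\alpha(\omega)z^\alpha$$
converging normally on $U$ to $f(\omega,\cdot)$. The uniqueness of this expansion pointwise in $\omega$ automatically defines functions $a_\alpha:\Omega\to\C$, and the only remaining task is to check that each $a_\alpha$ is measurable.

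To that end, I would fix a torus $T_\rho=\{\zeta\in\C^n:|\zeta_j|=\rho_j,\,j=1,\ldots,n\}$ contained in $U$ with all $\rho_j>0$---such a torus exists because $U$ is a nonempty Reinhardt domain and, by openness, contains a point with all coordinates nonzero---and use the classical integral representation
$$a_\alpha(\omega)=\frac{1}{(2\pi i)^n}\int_{T_\rho}\frac{f(\omega,\zeta)}{\zeta_1^{\alpha_1+1}\cdots\zeta_n^{\alpha_n+1}}\,d\zeta_1\cdots d\zeta_n,$$
whose value is independent of $\rho$ by uniqueness. The integrand is continuous in $\zeta\in T_\rho$ (since $f(\omega,\cdot)$ is holomorphic and the kernel is continuous and nonvanishing on $T_\rho$) and measurable in $\omega$ for each fixed $\zeta$ (since $f(\cdot,\zeta)$ is measurable and the kernel does not depend on $\omega$). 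Approximating the integral by Riemann sums along successively finer partitions of $T_\rho$, each such sum is a finite $\C$-linear combination of measurable evaluations $f(\cdot,\zeta_j)$ and is therefore measurable; uniform continuity of the integrand in $\zeta$ ensures that the Riemann sums converge pointwise in $\omega$ to $a_\alpha(\omega)$, so $a_\alpha$ is measurable as a pointwise limit of measurable functions.

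The main obstacle I anticipate is bookkeeping rather than analysis: the classical deterministic Laurent theorem on Reinhardt domains already supplies the expansion, the normal convergence, and the uniqueness, so all of the complex-analytic content is in place; the real work is the measurability-of-integral argument, which closely parallels the derivative-based measurability argument used to prove Theorem~\ref{Taylor}, and any formal measurability-of-integral lemma established in the annex could be invoked directly to streamline this step.
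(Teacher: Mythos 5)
Your proposal is correct and follows essentially the same route as the paper: invoke the deterministic Laurent theorem on Reinhardt domains pointwise in $\omega$ for existence, uniqueness, and normal convergence, then establish measurability of each coefficient via the integral formula over a torus and a Riemann-sum limit of measurable functions (the paper packages this last step as Proposition \ref{randomInt}, exactly the annex lemma you anticipated). Your choice of integrating over a torus $T_\rho\subset U$ rather than requiring a closed polydisc in $U$ is, if anything, the more careful formulation for a general Reinhardt domain, but the argument is the same.
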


\begin{proof}
Without loss of generality, assume the Reinhardt domain is centered at $0$. For fixed $\omega$, by  \cite[Th 1.5, p. 46]{RA1986}, there exists a unique Laurent series
\begin{equation*}
\sum_{\nu \in \Z^n} c_\nu(\omega) z^\nu
\end{equation*}
converging normally to $f(\omega, \cdot)$. Thus the Laurent series converges absolutely and uniformly on the compact subsets of $U$. The coefficients $c_\nu$ are given by the integral formula
\begin{equation}\label{c nu}
c_\nu(\omega) = \frac{1}{(2\pi)^n} \int_{bP} \frac{f(\omega,\zeta)}{\zeta^{\nu + 1}} \diff\zeta_1 \dots \diff\zeta_n,
\end{equation}
where $bP$  is the distinguished boundary of a polydisc $P(0,\rho)$ centered at $0$ with multiradius $\rho$ chosen such that its closure is contained in  $U$. Also, $\zeta^{\nu+1}$ is shorthand for $\zeta_1^{\nu_1+1}\zeta_2^{\nu_2+1}\cdots\zeta_n^{\nu_n+1}.$ 
The measurability of $c_\nu$ follows from using Proposition \ref{randomInt} multiple times since the function $F(\omega, \zeta) = f(\omega,\zeta)/\zeta^{\nu+1}$ is continuous and therefore Riemann integrable for fixed $\omega$.

\end{proof}

\begin{corollary}
Let $f(\omega,z)$ be a random holomorphic function in a connected Reinhardt domain $U.$ Then, there is a sequence of random  rational functions $r_k(\omega,z)$ such that, for each $\omega$ the sequence $r_j(\omega,\cdot)$ converges compactly to $f(\omega,\cdot)$ on $U.$ 
\end{corollary}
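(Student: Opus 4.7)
The plan is to obtain the desired sequence as partial sums of the random Laurent expansion provided by Theorem \ref{Laurent}. Since Theorem \ref{Laurent} gives, for a random holomorphic function $f$ on the connected Reinhardt domain $U$, a unique random Laurent series
$$
    f(\omega,z) = \sum_{\nu \in \Z^n} c_\nu(\omega) z^\nu
$$
with each coefficient $c_\nu(\cdot)$ measurable and with normal convergence on $U$ for every fixed $\omega$, the natural candidates for $r_k$ are truncations of this series.

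First, I would fix an enumeration $\nu_1, \nu_2, \ldots$ of $\Z^n$ and define
$$
    r_k(\omega, z) = \sum_{j=1}^{k} c_{\nu_j}(\omega)\, z^{\nu_j}.
$$
Each monomial $z^{\nu_j}$ with $\nu_j \in \Z^n$ is a rational function of $z$, so $r_k(\omega, \cdot)$ is a rational function for every $\omega$. For fixed $z$ with no coordinate equal to zero, the map $\omega \mapsto r_k(\omega, z)$ is a finite linear combination of the measurable functions $c_{\nu_j}(\cdot)$, hence measurable; this shows $r_k$ is a random rational function in the sense used throughout the paper.

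Next, I would deduce compact convergence from normal convergence. Given $\omega$ and a compact set $K \subset U$, normal convergence of the Laurent series on $U$ provides, for any $\varepsilon > 0$, a finite set $\Lambda \subset \Z^n$ such that
$$
    \sum_{\nu \notin \Lambda} |c_\nu(\omega) z^\nu| < \varepsilon \quad \text{for all } z \in K.
$$
Choosing $N$ large enough that $\{\nu_1, \ldots, \nu_N\} \supset \Lambda$, we have $|f(\omega, z) - r_k(\omega, z)| < \varepsilon$ for all $k \geq N$ and all $z \in K$, which is precisely compact convergence on $U$.

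The only subtle point is verifying that normal convergence actually forces this uniform control of the tail under an arbitrary enumeration; but this is built into the definition of normal convergence recalled in the paper (the index set outside $\Lambda$ is arbitrary), so there is no real obstacle. The main work was already done in Theorem \ref{Laurent}, and this corollary is essentially a measurable repackaging of partial sums.
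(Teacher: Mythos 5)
Your proposal is correct and is exactly the argument the paper intends: the corollary is stated as an immediate consequence of Theorem \ref{Laurent}, obtained by taking partial sums of the random Laurent expansion, noting that Laurent polynomials are rational functions with measurable coefficients and that normal convergence dominates the tail of any enumeration. The paper leaves this implicit, so your write-up simply makes the same route explicit.
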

      
The theorems in this section can be considered as special cases of Theorem \ref{Laurent}. For example, suppose as in Theorem \ref{lambda z} that  $U$ is a domain with the property that $\lambda z\in U,$ for each $z\in U$ and each $\lambda$ with $|\lambda|\le 1.$ Then $U$ is a connected Reinhardt domain. Then, $U$ is a connected Reinhardt domain. Now, suppose $f$ is holomorphic in $U.$ We claim that the Laurent series for $f$ is in fact the Taylor series. It is sufficient to show that $c_\nu=0,$ if $\nu_j<0,$ for some $j=1,\cdots,n.$ We may write
$$
	c_\nu(\omega) =  
\frac{1}{(2\pi)^n}\int_{bP^\prime}
	\int_{|\zeta_j|=\rho_j}\frac{f(\omega,\zeta)}{\zeta_j^{\nu_j+1}}d\zeta_j   
\frac{d\zeta'}{\prod_{k\not=j}\zeta_k^{\nu_k+1}},
$$ 
where $P^\prime$ is the polydisc $\prod_{k\not=j}(|z_k|\le \rho_k).$ Since $\nu_j<0,$ 
$$
	\int_{|\zeta_j|=\rho_j}\frac{f(\omega,\zeta)}{\zeta_j^{\nu_j+1}}d\zeta_j  = 0
$$
by Cauchy's Theorem. Thus, $c_\nu=0$ as claimed and the Laurent series for $f$ is indeed the Taylor series. By Theorem \ref{Laurent}, for each $\omega,$ the Taylor series then converges normally to $f(\omega,\cdot)$ on $U.$ It follows that the homogeneous expansion also converges compactly to $f(\omega,\cdot)$ and so the partial sums $p_m(\theta,\cdot)$ converge compactly to $f(\omega,\cdot).$ Thus, we have Theorem \ref{lambda z}.

%%%%%%%%%%%%%%%%%%%%%%%%%%%%%%%%%%%%%%%%%%%
%%%%%%%%%%%%%%%%%%%%%%%%%%%%%%%%%%%%%%%%%%%%%

\section{Uniform algebras}

Let $A$ be a uniform algebra on a compact metric space $K$ (see \cite{G}) and denote by $M$ the maximal ideal space of $A.$  A random element of $A$ is a measurable mapping 
$$
	F:\Omega\rightarrow A, \quad \omega\mapsto F_\omega.
$$

The next property will be of use to us.

\begin{lemma}
	The evaluation homomorphism $\phi_z : A\rightarrow \C$ defined as $\phi_z(f)=f(z)$, for some $z\in K$, is a continuous function.
\end{lemma}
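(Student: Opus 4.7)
The plan is to observe that a uniform algebra $A$ on $K$ carries the supremum norm $\|f\|_\infty = \sup_{w \in K}|f(w)|$, and that evaluation at a point is trivially dominated by this norm. The entire argument reduces to the inequality
\begin{equation*}
  |\phi_z(f) - \phi_z(g)| = |f(z) - g(z)| \leq \sup_{w \in K} |f(w) - g(w)| = \|f - g\|_\infty,
\end{equation*}
valid for any $f, g \in A$ and any fixed $z \in K$.

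From this, $\phi_z$ is Lipschitz with constant $1$, and in particular continuous. So the only step is to write down the chain of inequalities above and note that linearity of $\phi_z$ reduces continuity to boundedness.

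There is no real obstacle here; the only thing worth being careful about is to point out which norm is being used on $A$ (the sup norm, by definition of a uniform algebra) and that $z \in K$ is a valid evaluation point. The statement and its proof are purely a matter of unpacking definitions.
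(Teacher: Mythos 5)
Your proof is correct: since a uniform algebra $A$ on $K$ carries the sup norm, the bound $|\phi_z(f)-\phi_z(g)|\le\|f-g\|_\infty$ shows $\phi_z$ is $1$-Lipschitz, hence continuous. The paper states this lemma without any proof at all, and your argument is precisely the standard one-line justification the authors evidently had in mind.
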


\begin{proposition}\label{random element}
Let $F$ be a random element of $A,$ and define a function
$$
	f:\Omega\times K\rightarrow\C, \quad (\omega,z)\mapsto F_\omega(z).
$$
Then, 
\begin{equation}\label{f to F}
	f(\omega,\cdot)\in A, \quad \forall \omega\in\Omega \quad\quad \mbox{and}\quad\quad
	f(\cdot, z) \quad \mbox{is measurable}, \quad \forall z\in K. 
\end{equation}

Conversely, if a function $f:\Omega\times K\rightarrow\C$  satisfies (\ref{f to F}), then $\omega\mapsto f(\omega,\cdot)$ is measurable. 
\end{proposition}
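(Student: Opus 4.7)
The proof has two directions. The forward direction is nearly immediate: by construction $f(\omega, \cdot) = F_\omega$ lies in $A$ for every $\omega$, and for fixed $z \in K$ we have $f(\cdot, z) = \phi_z \circ F$, where $\phi_z : A \to \C$ is evaluation at $z$. The preceding lemma guarantees continuity of $\phi_z$, hence Borel measurability, and the composition of the measurable map $F$ with the Borel measurable $\phi_z$ is measurable.

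For the converse, the plan is to exploit the separability of $A$. Since $K$ is a compact metric space, $C(K)$ is separable in the uniform norm, and $A$ is a metric subspace of $C(K)$ and therefore also separable. Consequently the Borel $\sigma$-algebra on $A$ is generated by the open balls $B(g, r)$, and showing that the map $F : \omega \mapsto f(\omega, \cdot)$ is measurable reduces to verifying that
\begin{equation*}
F^{-1}(B(g, r)) = \{\omega \in \Omega : \|f(\omega, \cdot) - g\|_\infty < r\}
\end{equation*}
is measurable for every $g \in A$ and every $r > 0$.

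The key step is to replace the supremum over $K$ by a countable one. Fix a countable dense subset $\{z_n\}_{n \in \N} \subset K$, which exists because $K$ is a compact metric space. Since $f(\omega, \cdot) - g$ belongs to $C(K)$ for each $\omega$, density and continuity give
\begin{equation*}
\|f(\omega, \cdot) - g\|_\infty = \sup_{n \in \N} |f(\omega, z_n) - g(z_n)|.
\end{equation*}
Each $\omega \mapsto |f(\omega, z_n) - g(z_n)|$ is measurable by hypothesis (\ref{f to F}), and a countable supremum of measurable real-valued functions is measurable; thus $F^{-1}(B(g, r))$ is measurable.

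The main obstacle — really the only substantive step — is this passage from an uncountable supremum over $K$ to a countable one over $\{z_n\}$, which relies simultaneously on the separability of $K$ to supply the dense sequence and on the continuity of elements of $A$ to equate the two suprema. Everything else is a routine application of the facts that measurability is preserved under composition with continuous maps and under countable suprema.
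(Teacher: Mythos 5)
Your proposal is correct and follows essentially the same route as the paper: the forward direction via composition with the continuous evaluation $\phi_z$, and the converse via separability of $A\subset C(K)$ to reduce Borel sets to balls, together with separability of $K$ and continuity of $f(\omega,\cdot)-g$ to replace the supremum over $K$ by a countable one. The only cosmetic difference is that you work with open balls and a countable supremum of measurable functions, while the paper uses closed balls and a countable intersection of measurable sets; both hinge on the same two separability facts.
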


\begin{proof}
Let $F$ be a random element of $A.$
Since $f(\omega,\cdot)=F_\omega,$  we have that $f(\omega,\cdot)\in A.$ Thus $f$ satisfies the first assertion of  (\ref{f to F}).
For the second assertion of (\ref{f to F}), we note that for $z$ fixed, 
$$f(\omega,z)=F_\omega(z)=\phi_z(F_\omega)=(\phi_z\circ F)(\omega).$$
Thus, $f(\cdot,z)$ is the composition of the measurable function $F$ with the continuous function $\phi_z$. Therefore $f(\cdot,z)$ is indeed measurable. 

Conversely, suppose $f:\Omega\times K\rightarrow\C$  satisfies (\ref{f to F}), with $K$ a compact metric space (such as a compact subset of $\C^n$). We may suppose that $A$ is a uniform algebra over this space. 
We show this in multiple steps.

It is known that $C(K)=C(K,\C)$ is separable. Since $A$ is a subset of $C(K)$, a metric space, it is itself separable, with $Q$ dense in $A$ and countable.

1) Let $f$ be a function respecting $(\ref{f to F})$. We define $h$ : $\Omega \longrightarrow A$ by $h(\omega)=f(\omega,\cdot)$. Denote by $\overline{B_r}(g)=\{j \in A : \sup_{x\in K} |j(x)-g(x)|\leq r\}$ the closed ball of radius $r>0$ around a point of $A$. Finally, we write $f_z^{-1}(E)=\{\omega \in \Omega : f(\omega, z)\in E\}$. We know by hypothesis that if $E$ is measurable, then $f_z^{-1}(E)$ is measurable.

We now have that
\begin{align*}
	h^{-1}(\overline{B_r}(g))&=\{\omega : \sup_{x\in K} |f(\omega,x)-g(x)|\leq r\}\\
	&=\bigcap_{x\in K}\{\omega : |f(\omega,x)-g(x)|\leq r\}.
\end{align*}
We know that $f(\omega,\cdot)$ is continuous, as well as  $g$ and the norm function, and thus by composition $|f(\omega,x)-g(x)|$ is a continuous function of $x$. We know $K$ is separable, with countable dense subset $P=\{x_j\}_{j=0}^\infty$. By continuity and density, $|f(\omega,x)-g(x)|\leq r$ for all $x\in K$ if and only if $|f(w,x_j)-g(x_j)|\leq r$ for all $x_j\in P$. Thus, we can write
\begin{align*}
	h^{-1}(\overline{B_r}(g))&=\bigcap_{x_j\in P}\{\omega : |f(\omega,x_j)-g(x_j)|\leq r\}\\
	&=\bigcap_{x_j\in P}f_{x_j}^{-1}(\{\lambda\in \C : |\lambda-g(x_j)|\leq r\}).
\end{align*}
As the $\{\lambda\in \C : |\lambda-g(x_j)|\leq r\}$ are closed sets, and thus Borel-subsets of $\C$. The pre-images will then be measurable and by countable intersection $h^{-1}(\overline{B_r}(g))$ will be measurable.

2) $h$ is measurable : Let $O$ be an open set of $A$. We recall that $Q$ is a countable dense subset of $A$. As all points of $O$
are interior, we have
\begin{align*}
	O=\bigcup_{g\in Q\cap O} \overline{B_{\frac{d(g,A\setminus O)}{2}}}(g).
\end{align*}
Thus,
\begin{align*}
	h^{-1}(O)&=h^{-1}\left({\bigcup_{g\in Q\cap O} \overline{B_{\frac{d(g,A\setminus O)}{2}}}(g)}\right)\\
	&=\bigcup_{g\in Q\cap O} h^{-1}\left({\overline{B_{\frac{d(g,A\setminus O)}{2}}}(g)}\right)
\end{align*}
and $h^{-1}(O)$ is thus the countable union of measurable sets. As this is true for all open sets  in $A$, and the open sets are a base of the Borel subsets of $A$, the pre-image of $h$ of a Borel subset of $A$ is measurable. As such, $h$ is measurable. By the definition of $h$, we have that $\omega\mapsto f(\omega,\cdot)$ is measurable.
\end{proof}

\begin{corollary}\label{maps F to f}
Let $X$ be a compact metric space. Let
$$
	F:\Omega\rightarrow C(X,\C^n) \quad \omega\mapsto F(\omega)
$$
be a random element of $C(X,\C^n),$ and define a function
$$
	f:\Omega\times X\rightarrow\C^n, \quad (\omega,x)\mapsto F(\omega)(x).
$$
Then, 
$$
	f(\omega,\cdot)\in  C(X,\C^n), \quad \forall	
		\omega\in\Omega \quad\quad \mbox{and}\quad\quad
	f(\cdot, x) \quad \mbox{is measurable}, \quad \forall x\in X. 
$$

Conversely, if a function $f:\Omega\times X\rightarrow\C^n$  satisfies this condition, then $\omega\mapsto f(\omega,\cdot)$ is measurable. 
\end{corollary}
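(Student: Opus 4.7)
The plan is to reduce this corollary to Proposition \ref{random element} via the canonical identification $C(X, \C^n) \cong C(X, \C)^n$ of Banach spaces obtained by decomposing $f = (f_1, \ldots, f_n)$ into coordinates.

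For the forward implication, I would observe that if $F : \Omega \to C(X, \C^n)$ is measurable and we set $f(\omega, x) = F(\omega)(x)$, then $f(\omega, \cdot) = F(\omega)$ lies in $C(X, \C^n)$ by hypothesis, and for each fixed $x \in X$ the evaluation $\phi_x : g \mapsto g(x)$ is a bounded linear map $C(X, \C^n) \to \C^n$, so $f(\cdot, x) = \phi_x \circ F$ is measurable as the composition of a measurable map with a continuous map.

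For the converse, I would split $f = (f_1, \ldots, f_n)$ into its coordinates $f_i : \Omega \times X \to \C$. Since the coordinate projections $\C^n \to \C$ are continuous, the hypotheses transfer componentwise: $f_i(\omega, \cdot)$ is continuous for each $\omega$ and $f_i(\cdot, x)$ is measurable for each $x$. Now $C(X, \C)$ is a uniform algebra on the compact metric space $X$ (containing constants, separating points, and uniformly closed), so Proposition \ref{random element} applies with $A = C(X, \C)$ and yields the measurability of each coordinate map $\omega \mapsto f_i(\omega, \cdot)$ into $C(X, \C)$.

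To finish, I would combine the coordinates: the tuple $\omega \mapsto (f_1(\omega, \cdot), \ldots, f_n(\omega, \cdot))$ is measurable for the product $\sigma$-algebra on $C(X, \C)^n$, which coincides with the Borel $\sigma$-algebra on this product since $C(X, \C)$ is a separable metric space. Transporting through the linear homeomorphism $C(X, \C)^n \cong C(X, \C^n)$ gives measurability of $\omega \mapsto f(\omega, \cdot)$ into $C(X, \C^n)$. The only point requiring care is this final $\sigma$-algebra bookkeeping; no new analytic input beyond Proposition \ref{random element} is needed.
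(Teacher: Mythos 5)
Your proposal is correct and follows essentially the same route as the paper: both reduce to Proposition \ref{random element} by decomposing into the $n$ coordinate functions and then reassembling. If anything, you are slightly more careful than the paper at the reassembly step (noting that the product $\sigma$-algebra on $C(X,\C)^n$ agrees with the Borel $\sigma$-algebra by separability, where the paper simply invokes its finite-dimensional Propositions \ref{component} and \ref{cont}), and your forward direction via the evaluation map on $C(X,\C^n)$ is a harmless shortcut.
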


\begin{proof}
For $j=1,\ldots,n,$ 
let $F_j(\omega)$ be the components of $F(\omega)$ and, define functions 
$$
	f_j:\Omega\times X\rightarrow\C, \quad (\omega,x)\mapsto F_j(\omega)(x).
$$
By  Propositions \ref{component} and \ref{cont} of the appendix, each $F_j$ is a random element of $C(X,\C)=C(X).$ By Proposition \ref{random element}, for each $j=1,\ldots n,$ 
$$
	f_j(\omega,\cdot)\in C(X) \quad \forall \omega\in\Omega \quad\quad \mbox{and}\quad\quad
	f_j(\cdot, x) \quad \mbox{is measurable}, \quad \forall x\in X.
$$
Invoking Propositions \ref{component} and \ref{cont} again, 
$$
	f(\omega,\cdot)\in  C(X,\C^n), \quad \forall	
		\omega\in\Omega \quad\quad \mbox{and}\quad\quad
	f(\cdot, x) \quad \mbox{is measurable}, \quad \forall x\in X. 
$$
This proves the first part of the corollary. The proof of the other direction is similar. 
\end{proof}

Suppose  $f:\Omega\times K\rightarrow\C$  satisfies (\ref{f to F}). Then,  on account of Proposition \ref{random element},
we shall by abuse of notation and terminology, say that $f$ is a random element of $A.$ To $f$ we associate the Gelfand transorm 
$$
	\widehat f:\Omega\times M\rightarrow\C, \quad (\omega,\phi) \mapsto \phi(f(\omega,\cdot)).
$$

\begin{proposition}\label{f hat}
Suppose $f$ is a random element of $A.$ Then, 
the Gelfand transform $\widehat f$ is a random (continuous) function on $M.$ 
\end{proposition}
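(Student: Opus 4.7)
The plan is to verify the two conditions required of a random continuous function separately: continuity of $\widehat f(\omega,\cdot)$ on $M$ for each fixed $\omega$, and measurability of $\widehat f(\cdot,\phi)$ on $\Omega$ for each fixed $\phi$.

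For the first condition, I would simply recall that the maximal ideal space $M$ is equipped with the Gelfand topology, namely the weak-$*$ topology inherited from $A^*$. By construction, this is the coarsest topology for which every Gelfand transform $\widehat g:M\to\C$, $g\in A$, is continuous. In our situation, for each fixed $\omega$ the function $f(\omega,\cdot)$ belongs to $A$ by the hypothesis (i.e., condition (\ref{f to F}) from Proposition \ref{random element}), so $\phi\mapsto \phi(f(\omega,\cdot))=\widehat f(\omega,\phi)$ is continuous on $M$.

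For the second condition, fix $\phi\in M$. The map $\omega\mapsto f(\omega,\cdot)$, viewed as a map $h:\Omega\to A$, is measurable by the converse direction of Proposition \ref{random element}. On the other hand, every multiplicative linear functional $\phi:A\to\C$ is automatically continuous (of norm at most $1$), and hence Borel-measurable as a map between metric spaces. Therefore
\[
	\widehat f(\cdot,\phi)=\phi\circ h:\Omega\to \C
\]
is measurable as the composition of a measurable map with a continuous one.

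Combining the two, $\widehat f$ is continuous in $\phi$ for each fixed $\omega$ and measurable in $\omega$ for each fixed $\phi$, which is exactly the definition of a random continuous function on $M$. I do not expect a real obstacle: the work has already been done in Proposition \ref{random element}, and the rest follows from the defining property of the Gelfand topology together with the automatic continuity of characters on a commutative Banach algebra.
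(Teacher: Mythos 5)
Your proof is correct and follows essentially the same route as the paper: the key step in both is that $\widehat f(\cdot,\phi)=\phi\circ h$ is the composition of the measurable map $h:\omega\mapsto f(\omega,\cdot)$ (from Proposition \ref{random element}) with the continuous functional $\phi$. The only difference is that you also spell out the continuity of $\widehat f(\omega,\cdot)$ via the defining property of the Gelfand topology, which the paper leaves implicit.
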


\begin{proof}
We must show that, for each $\phi\in M,$  the function $\widehat f(\cdot,\phi)$ is  measurable. This follows from the fact that  $\widehat f(\cdot,\phi)$ is the composition of the continuous function $\phi$ with the function $\omega\mapsto f(\omega,\cdot),$ which is  measurable, since $f$ is a random element of $A.$

\end{proof}

\section{Random compact sets} 

\color{black}
Because of the Oka-Weil Theorem, the most important notions in complex approximation in several variables are those of polynomial  or rational convexity of compacta.  
\color{black}
First we state a few basic properties if random compact sets and then show that the evaluation of a random function on a compact set is a random compact set. We also show that the polynomially and rationnally convex hulls are transformations that preserve randomness, and that the Siciak extremal function and pluricomplex Green function of a random compact set are random functions.

For a metric space, $(X,d)$ we denote by $(\mathcal K^\prime(X),d_H)$ the space of non-empty compact subsets of $X$ equipped with the Hausdorff distance $d_H.$  We recall the following useful property. 

\begin{lemma}\label{ksep}
    The spaces $\mathcal K^\prime(\R^n)$ and $\mathcal K^\prime(\C^n)$ are separable.
\end{lemma}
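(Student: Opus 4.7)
The plan is to exhibit an explicit countable dense subset and verify density directly via the Hausdorff metric. For $\mathcal{K}'(\mathbb{R}^n)$, I would take
\[
\mathcal{F} = \bigl\{F \subset \mathbb{Q}^n : F \text{ is finite and non-empty}\bigr\},
\]
which is countable since $\mathbb{Q}^n$ is countable and the finite subsets of a countable set form a countable collection. For $\mathcal{K}'(\mathbb{C}^n)$, I would use the analogous family built from the Gaussian rationals $(\mathbb{Q}+i\mathbb{Q})^n$ (or identify $\mathbb{C}^n$ with $\mathbb{R}^{2n}$).

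To prove density, fix a non-empty compact set $K$ and $\varepsilon > 0$. Since $K$ is compact it is totally bounded, so one can extract a finite $(\varepsilon/2)$-net $\{x_1, \ldots, x_m\} \subset K$, meaning every point of $K$ lies within $\varepsilon/2$ of some $x_i$. By density of $\mathbb{Q}^n$ in $\mathbb{R}^n$, pick $q_i \in \mathbb{Q}^n$ with $\|x_i - q_i\| < \varepsilon/2$, and set $F = \{q_1, \ldots, q_m\} \in \mathcal{F}$.

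The verification of $d_H(K, F) < \varepsilon$ is then a two-sided triangle-inequality check. For any $x \in K$, choose $x_i$ with $\|x - x_i\| < \varepsilon/2$; then $\|x - q_i\| < \varepsilon$, so $\sup_{x \in K} \inf_{q \in F} \|x - q\| < \varepsilon$. Conversely, each $q_i$ satisfies $\|q_i - x_i\| < \varepsilon/2 < \varepsilon$ with $x_i \in K$, giving $\sup_{q \in F} \inf_{x \in K}\|q - x\| < \varepsilon$. Thus $d_H(K,F) < \varepsilon$, and the $\mathbb{C}^n$ case follows verbatim with Gaussian rationals in place of rationals.

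There is no real obstacle here; the only subtlety worth stating carefully is to check both inequalities defining the Hausdorff distance (one ensuring $F$ approximates $K$, the other that $K$ approximates $F$), which the choice of an $\varepsilon/2$-net inside $K$ handles simultaneously.
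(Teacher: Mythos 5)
Your proof is correct and is the standard argument: the finite non-empty subsets of $\Q^n$ (resp.\ of $(\Q+i\Q)^n$) form a countable family of compact sets, and total boundedness of a compact $K$ plus density of the rationals gives the two-sided Hausdorff estimate. The paper itself only \emph{recalls} this lemma without printing a proof (though its later reference, in the proof of Theorem \ref{P-hull random}, to a countable set $\mathcal V$ ``in the proof of Lemma \ref{ksep}'' indicates the intended argument was the same enumeration of finite rational point sets), so there is nothing to fault and no divergence of method to report.
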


Let $g: X \rightarrow Y$ be a continuous function. We may extend it to a function $g^{\mathcal K^\prime}:\mathcal K^\prime(X)\rightarrow \mathcal K^\prime(Y),$  defined by setting $g^{\mathcal K^\prime}(Q)=g(Q),$ for each $Q\in\mathcal K^\prime(X).$ 

\begin{lemma}\label{continuous extension}
Let $X$ and $Y$ be metric spaces. If $g:X\rightarrow Y$ is a continuous function on $X,$ then the extension $g^{\mathcal K^\prime}:\mathcal K^\prime(X)\rightarrow \mathcal K^\prime(Y)$ is a continuous function. 
\end{lemma}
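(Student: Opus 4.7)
The plan is to prove continuity of $g^{\mathcal K'}$ pointwise at each $Q \in \mathcal K'(X)$ directly from the definition of the Hausdorff distance, exploiting compactness of $Q$ to reduce the continuity of $g$ at each point of $Q$ to a single uniform modulus. First, I would note that $g^{\mathcal K'}$ is at least well-defined, since the continuous image of a nonempty compact set is nonempty and compact.

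Fix $Q \in \mathcal K'(X)$ and $\varepsilon > 0$. For each $x \in Q$, continuity of $g$ at $x$ yields $\delta_x > 0$ such that $d_X(x,x') < \delta_x$ implies $d_Y(g(x), g(x')) < \varepsilon/2$. The balls $B(x,\delta_x/2)$ cover $Q$, so by compactness finitely many suffice, say centered at $x_1,\dots,x_k$; set $\delta = \tfrac12\min_i \delta_{x_i}$. I now claim that $d_H(Q,Q') < \delta$ implies $d_H(g(Q), g(Q')) \le \varepsilon$.

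To verify the claim, pick any $x \in Q$. Then $x \in B(x_i, \delta_{x_i}/2)$ for some $i$, so $d_Y(g(x), g(x_i)) < \varepsilon/2$. On the other hand, by $d_H(Q,Q') < \delta$ there exists $x' \in Q'$ with $d_X(x,x') < \delta$, and the triangle inequality gives $d_X(x_i, x') < \delta_{x_i}/2 + \delta \le \delta_{x_i}$, so $d_Y(g(x_i), g(x')) < \varepsilon/2$ and hence $d_Y(g(x), g(x')) < \varepsilon$. Since $g(x) \in g(Q)$ and $g(x') \in g(Q')$, this shows every point of $g(Q)$ lies within distance $\varepsilon$ of $g(Q')$. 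The symmetric argument starting from $x' \in Q'$ (picking $x \in Q$ with $d_X(x,x') < \delta$ and then an index $i$ with $x \in B(x_i,\delta_{x_i}/2)$) gives the reverse inclusion, so $d_H(g(Q), g(Q')) \le \varepsilon$, establishing continuity at $Q$.

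The only mild subtlety is that one should not try to invoke uniform continuity of $g$ on a neighborhood of $Q$, since such a neighborhood need not be compact (or even totally bounded) in a general metric space $X$; the finite subcover of $Q$ itself is the correct substitute and is what makes the argument go through cleanly. Otherwise the proof is straightforward bookkeeping with $\varepsilon/2$ and the triangle inequality, so I expect no real obstacle.
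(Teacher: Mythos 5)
Your proof is correct and follows essentially the same route as the paper's: a finite subcover of $Q$ by half-radius balls yields a single $\delta$ giving a one-sided uniform continuity estimate on $Q\times X$, from which the Hausdorff bound follows by the $\varepsilon/2$ triangle inequality. You merely spell out the final passage from that estimate to $d_H(g(Q),g(Q'))\le\varepsilon$ in more detail than the paper does.
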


\begin{proof} 
Let $X$ be a metric space (not necessarily compact). 
It is sufficient to show that  $g^{\mathcal K^\prime}$ is continuous at each point of $\mathcal K(X).$ Fix a compact set $Q_0\subset X$ and $\varepsilon>0.$ We claim there is a $\delta>0,$ such that 
\begin{equation}\label{common delta}
	\forall (a,b)\in Q_0\times X, \quad\quad d(a,b)<\delta \Rightarrow d(f(a),f(b))<\varepsilon. 
\end{equation}
For each $a\in Q_0,$ there is a $\delta_a>0,$ such that for all $b\in X,$ if $d(a,b)<\delta_a,$ then $d(f(a),f(b))<\varepsilon/2.$ By compactness of $Q_0,$ there are finitely many points $a_1,\ldots, a_n\in Q_0,$ such that setting $B_j=B_{\delta_j/2}(a_j),$ where $\delta_j=\delta_{a_j},$ we have that $Q_0\subset B_1\cup\cdots\cup B_n.$ Using the triangle inequality, it is easy to see that (\ref{common delta}) holds, with $\delta=(1/2)\min\{\delta_1,\ldots,\delta_n\}.$

From  (\ref{common delta}) it follows that $d_H(Q_0,Q)<\delta \Rightarrow d_H(f(Q_0)),f(Q))<\varepsilon.$ Therefore $f^{\mathcal K^\prime}$ is continuous at $Q_0.$ Since $Q_0\in \mathcal K^\prime(X)$ was arbitrary, it follows that  $f^{\mathcal K^\prime}$ is continuous. 
\end{proof}

\begin{remark}\label{characterizations}
Since $\mathcal K^\prime(X)$ is a metric space, we may also consider it as a measure space, where the measurable sets are the Borel subsets of $\mathcal K^\prime(X).$ 
We may also, as shown in Appendix C of \cite{Mo}, characterize $\mathcal B(\mathcal K^\prime(X))$ as the $\sigma$-algebra generated by the sets $\{K\in \mathcal K^\prime(X) : K\subset G\}$ where  $G$ varies over the open sets of $X$. Alternatively, the Borel sets can be generated by the sets $\{K\in \mathcal K^\prime(X) : K\cap G\neq \emptyset\}$ where again $G$ varies over the open sets of $X$.
\end{remark}

A random compact set in $X$ is a  measurable function $k:\Omega\rightarrow\mathcal K^\prime(X).$ 
If $k_j:\Omega\rightarrow \mathcal K^\prime(X), \, j=1,2,$ are two random compact sets, we denote by  $k_1\cup k_2$ the function $\Omega\rightarrow\mathcal K^\prime(X),$ defined by $(k_1\cup k_2)(\omega)=k_1(\omega)\cup k_2(\omega),$ for $\omega\in\Omega.$ 

By the characterization of Remark \ref{characterizations}, it is easy to prove the following lemmas.

\begin{lemma}\label{cup}
If $k_j:\Omega\rightarrow \mathcal K^\prime(X), \, j=1,2,$ are two random compact sets, then $k_1\cup k_2$ is a random compact set. We also have the countable intersection of random compact sets is a random compact set. 
\end{lemma}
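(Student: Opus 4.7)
The plan is to check measurability against the two generating families of $\mathcal B(\mathcal K^\prime(X))$ listed in Remark \ref{characterizations}, namely the sets $\{K\in\mathcal K^\prime(X):K\cap G\ne\emptyset\}$ and $\{K\in\mathcal K^\prime(X):K\subset G\}$ indexed by open sets $G\subset X$. For each statement, I would pick whichever of the two descriptions interacts cleanly with the set-theoretic operation at hand.

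For the union, the first family works transparently. Since $(k_1(\omega)\cup k_2(\omega))\cap G\ne\emptyset$ if and only if $k_1(\omega)\cap G\ne\emptyset$ or $k_2(\omega)\cap G\ne\emptyset$, one has
\[
(k_1\cup k_2)^{-1}\bigl(\{K:K\cap G\ne\emptyset\}\bigr)=k_1^{-1}\bigl(\{K:K\cap G\ne\emptyset\}\bigr)\cup k_2^{-1}\bigl(\{K:K\cap G\ne\emptyset\}\bigr),
\]
a union of two measurable sets. As these sets generate $\mathcal B(\mathcal K^\prime(X))$, this suffices for measurability of $k_1\cup k_2$.

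For a countable intersection $k=\bigcap_n k_n$, the second family is the right one to test. By compactness of $k_1(\omega)$, the relatively closed subsets $k_n(\omega)\cap(X\setminus G)$ of $k_1(\omega)$ enjoy the finite intersection property, so $k(\omega)\subset G$ if and only if $\bigcap_{n=1}^N k_n(\omega)\subset G$ for some $N$. Thus
\[
\{\omega:k(\omega)\subset G\}=\bigcup_{N\ge 1}\{\omega:k_1(\omega)\cap\cdots\cap k_N(\omega)\subset G\},
\]
which reduces the problem to measurability of finite intersections, and by induction to the two-set case.

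The main obstacle is this finite-intersection step. My plan is to factor it through the product: by Lemma \ref{ksep}, $\mathcal K^\prime(X)$ is separable, hence the Borel $\sigma$-algebra on $\mathcal K^\prime(X)\times\mathcal K^\prime(X)$ coincides with the product of the Borel $\sigma$-algebras, so $(k_1,k_2):\Omega\to\mathcal K^\prime(X)\times\mathcal K^\prime(X)$ is measurable. The intersection map $(A,B)\mapsto A\cap B$ is upper semi-continuous in the Vietoris sense: if $A\cap B\subset G$ with $G$ open, a compactness argument based on the positive Hausdorff distance between $A\setminus G$ and $B$ (and symmetrically) produces neighborhoods $U$ of $A$ and $V$ of $B$ such that $A'\cap B'\subset G$ whenever $(A',B')\in U\times V$. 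Consequently the preimages of the generators $\{K:K\subset G\}$ are open in the product, the intersection map is Borel measurable on the subset where it produces a non-empty compact set (the natural ambient for the claim), and $k_1\cap k_2=\cap\circ(k_1,k_2)$ is measurable as a composition.
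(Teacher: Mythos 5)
The paper never actually writes out a proof of this lemma; it only asserts that it ``is easy to prove'' from the characterization in Remark \ref{characterizations}, so your proposal is supplying the work the paper omits. Your union argument is exactly the intended one: testing $k_1\cup k_2$ against the generators $\{K : K\cap G\neq\emptyset\}$ turns the preimage into a union of two measurable sets. For the countable intersection, your reduction via the finite intersection property to finite intersections, and then by induction to the two-set case, is correct, and you have correctly identified the step the paper sweeps under the rug, namely Borel measurability of $(A,B)\mapsto A\cap B$. Your route through the product space (using separability so that the Borel $\sigma$-algebra of the product is the product $\sigma$-algebra, then upper semicontinuity of intersection) is sound; an alternative that stays entirely inside the first generating family is to write $\{\omega : k_1(\omega)\cap k_2(\omega)\cap(X\setminus G)\neq\emptyset\}$ as $\bigcap_m\bigcup_U\left(\{\omega:k_1(\omega)\cap U\neq\emptyset\}\cap\{\omega:k_2(\omega)\cap U\neq\emptyset\}\right)$, where $U$ runs over basic balls of radius less than $1/m$ meeting $X\setminus G$. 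Two caveats. First, your parenthetical justification of upper semicontinuity is slightly off: positivity of the distance between $A\cap(X\setminus G)$ and $B$ does not by itself exclude a point of $X\setminus G$ lying close to $A$ and to $B$ simultaneously while the nearby points of $A$ and of $B$ both sit inside $G$; the clean argument is sequential --- if $x_n\in A_n\cap B_n\cap(X\setminus G)$ with $A_n\to A$ and $B_n\to B$ in Hausdorff distance, a subsequence of $(x_n)$ converges to a point of $A\cap B\cap(X\setminus G)$, which is empty. Second, as you note, the countable intersection can be empty, so the statement really requires either the standing assumption that $\bigcap_j k_j(\omega)\neq\emptyset$ for every $\omega$ or the pseudo-random framework of Lemma \ref{pseudo union}; with that caveat your argument is complete.
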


\begin{lemma}\label{countable cup}
Let $\{k_i\}_{i=0}^\infty$ be random compact sets. Suppose we know that for each $\omega$, $k(\omega)=\cup_{i=0}^\infty k_i(\omega)$ is a compact set. Then $k$ is a random compact set.
\end{lemma}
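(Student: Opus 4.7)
The plan is to verify measurability of $k:\Omega\to\mathcal{K}^\prime(X)$ by checking it on a convenient generating family of the Borel $\sigma$-algebra, rather than on all Borel sets. Since a function into a measurable space is measurable as soon as the preimage of each set in a generating family is measurable, I would invoke the second characterization from Remark \ref{characterizations}: $\mathcal{B}(\mathcal{K}^\prime(X))$ is generated by the collection
\[
	\mathcal{G} = \bigl\{ A_G : G\subset X \text{ open} \bigr\}, \quad
	A_G := \{K\in\mathcal{K}^\prime(X) : K\cap G \neq \emptyset\}.
\]
So the goal reduces to showing that $k^{-1}(A_G)$ is measurable in $\Omega$ for every open $G\subset X$.

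The key observation is a set-theoretic identity: for any $\omega\in\Omega$,
\[
	k(\omega)\cap G \neq \emptyset \iff \exists\, i\in\N\cup\{0\} \text{ such that } k_i(\omega)\cap G\neq \emptyset,
\]
which holds simply because $k(\omega)=\bigcup_{i=0}^\infty k_i(\omega)$, so a point of $k(\omega)$ lying in $G$ must lie in some $k_i(\omega)$, and conversely. Rewriting this identity yields
\[
	k^{-1}(A_G) \;=\; \bigcup_{i=0}^\infty k_i^{-1}(A_G).
\]
Each $k_i$ is measurable by hypothesis, so each $k_i^{-1}(A_G)$ is a measurable subset of $\Omega$, and a countable union of measurable sets is measurable. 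Therefore $k^{-1}(A_G)\in\mathcal A$ for every open $G\subset X$, and hence $k$ is measurable with respect to $\mathcal{B}(\mathcal{K}^\prime(X))$, i.e., a random compact set.

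There is essentially no obstacle here provided we use the right generating family; the compactness hypothesis on $k(\omega)$ is needed only to ensure that $k$ actually takes values in $\mathcal{K}^\prime(X)$ (so that speaking of its measurability into this space even makes sense). Notice that the dual generating family $\{K : K\subset G\}$ mentioned in Remark \ref{characterizations} would be awkward here, because $k(\omega)\subset G$ is not easily expressed in terms of the $k_i(\omega)$'s without extra compactness/finiteness; this is why I would specifically pick the ``hitting'' characterization of the Borel structure.
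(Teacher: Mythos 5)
Your proof is correct and is precisely the argument the paper intends: the paper leaves this lemma as an easy consequence of Remark \ref{characterizations}, and your identity $k^{-1}(A_G)=\bigcup_{i} k_i^{-1}(A_G)$ for the ``hitting'' generating sets $A_G=\{K : K\cap G\neq\emptyset\}$ is the natural way to carry that out. Your closing remark about why the dual family $\{K : K\subset G\}$ would be awkward is also a sensible observation, though not needed for the proof.
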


By corollary \ref{maps F to f}, if $X$ is a compact metric space, we can say that a function $f: \Omega \times X \rightarrow \C^n$ is (by abuse of notation and terminology) a random element of $C(X, \C^n)$ if $f(\omega, \cdot) \in C(X, \C^n)$ for all $\omega \in \Omega$ and $f(\cdot, z)$ is measurable for all $z \in X$.

\begin{lemma}\label{singleton}
Let $X$ be a compact metric space and  $f:\Omega\times X\rightarrow \C^n$ be  a random  element of $C(X,\C^n),$ 
\color{black}
Then for each $x\in X,$ the function 
$$
	f(\cdot,x):\Omega\rightarrow\C^n, \quad \omega\mapsto f(\omega,x)
$$ 
is a random complex vector 
and the (singleton-valued) function
$$
	\mathcal K^f(\cdot,\{x\}):\Omega\rightarrow\mathcal K^\prime(\C^n), \quad 
		\omega\mapsto \{f(\omega,x)\}
$$
is a random compact set. 
\end{lemma}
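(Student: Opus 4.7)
The first assertion is essentially immediate from the hypothesis. By the abuse of notation explained just before the lemma, saying that $f$ is a random element of $C(X,\C^n)$ means precisely that $f(\omega,\cdot)\in C(X,\C^n)$ for each $\omega$ and that $f(\cdot,z)$ is measurable for each $z\in X$. Taking $z=x$, the map $f(\cdot,x):\Omega\to\C^n$ is measurable, hence is a random complex vector.

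For the second assertion, the plan is to express $\mathcal K^f(\cdot,\{x\})$ as a composition of the measurable map $f(\cdot,x)$ with a continuous map. Consider the singleton embedding
\[
	\iota:\C^n\rightarrow\mathcal K^\prime(\C^n),\quad z\mapsto\{z\}.
\]
A direct computation with the Hausdorff distance shows $d_H(\{z\},\{w\})=|z-w|$, so $\iota$ is in fact an isometric embedding and, in particular, continuous. Since $\mathcal K^f(\cdot,\{x\})=\iota\circ f(\cdot,x)$, and the composition of a measurable function with a continuous function into a Borel-measurable space is measurable, we conclude that $\mathcal K^f(\cdot,\{x\})$ is measurable, i.e.\ a random compact set.

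As an alternative, one could argue directly from the characterization in Remark~\ref{characterizations}: the Borel $\sigma$-algebra on $\mathcal K^\prime(\C^n)$ is generated by the sets $\{K\in\mathcal K^\prime(\C^n):K\subset G\}$ for $G$ open in $\C^n$, and
\[
	\{\omega\in\Omega:\{f(\omega,x)\}\subset G\}=\{\omega\in\Omega:f(\omega,x)\in G\}=f(\cdot,x)^{-1}(G),
\]
which is measurable by the first part. Either route works; I would favour the composition argument since it is shorter and uses only the continuity of $\iota$. There is no serious obstacle here — the only thing to be careful about is noting that $\iota$ really is continuous (equivalently, that $d_H$ restricted to singletons agrees with the underlying metric), which is a one-line verification.
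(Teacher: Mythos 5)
Your proof is correct, but it takes a genuinely different and more economical route than the paper's. The paper factors $\mathcal K^f(\cdot,\{x\})$ through the function space: it first invokes the measurability of $\omega\mapsto f(\omega,\cdot)$ as a map into $C(X,\C^n)$ (via Corollary~\ref{maps F to f}), then proves by a separability argument --- writing preimages of closed balls $\overline{B_r}(W)$ in $\mathcal K^\prime(\C^n)$ as countable intersections over a countable dense subset of $W$ --- that the evaluation-to-singleton map $\widetilde x:C(X,\C^n)\rightarrow\mathcal K^\prime(\C^n)$, $g\mapsto\{g(x)\}$, is measurable, and finally composes. You instead factor through $\C^n$: the singleton embedding $\iota:z\mapsto\{z\}$ is an isometry since $d_H(\{z\},\{w\})=|z-w|$, hence continuous, and $\mathcal K^f(\cdot,\{x\})=\iota\circ f(\cdot,x)$ is then measurable using only the hypothesis that $f(\cdot,x)$ is measurable. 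This is shorter, avoids any appeal to Corollary~\ref{maps F to f} or to the separability of $\mathcal K^\prime(\C^n)$, and your alternative via the generating sets $\{K:K\subset G\}$ of Remark~\ref{characterizations} is equally clean. What the paper's heavier approach buys is the measurability of the map $\widetilde x$ itself, which is reused in the proof of Theorem~\ref{F sub f}; your argument does not produce that by-product, but for the lemma as stated it is entirely sufficient.
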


\begin{proof}
The first assertion follows from the definition of a random function. 

For brevity, we write $F_x=\mathcal K^f(\cdot,\{x\}).$ We need to show that $F_x$ is measurable. We use separability arguments. For $W\in \mathcal K^\prime(\C^n)$ and  $r>0,$ consider the closed ball $\overline{B_r}(W)=\{V\in \mathcal K^\prime(\C^n) : d_H(W,V)\leq r\}$  in $\mathcal K^\prime(\C^n)$. 
It is easy to see that every closed ball is closed.

Denote by $\widetilde x:C(X,\C^n)\rightarrow  \mathcal K^\prime(\C^n)$ 
the mapping $g\mapsto \{g(x)\},$ for $g\in C(X,\C^n).$ We then see directly that
\begin{align*}
    \widetilde x^{-1}(\overline{B_r}(W))&=\{g \in C(X,\C^n) : d_H(g(\{x\}),W)\leq r \}\\
    &=\bigcap_{y\in W}\{g \in C(K,\C^n) : d(g(x),y)\leq r \}.
\end{align*}
As $\C^n$ is separable and $W\subset \C^n$, $W$ is also separable. There thus exists a countable dense subset $W^*$ of $W$ and since $d$ is a continuous function, we can restrict the intersection to $W^*$. We also know that the $\{g\in C(X,\C^n) : d(g(x),y)\leq r\}$ are measurable sets as they are closed. As such, $ \widetilde x^{-1}(\overline{B_r}(W))$ is measurable, being the countable intersection of measurable sets. We know from Lemma \ref{ksep} that $\mathcal K(\C^n)$ is separable. Thus, by a similar argument to that in the proof of Proposition \ref{random element}, every open set of $\mathcal K^\prime(\C^n)$ can be expressed as a countable union of closed balls, and we can then generalize to all Borel subsets. Hence,  the function $\widetilde x$ is a measurable function.  
By hypothesis, $f$ is a random element of $C(X,\C^n),$ so the mapping $\omega\mapsto f(\omega,\cdot)$ is measurable. 
It follows that $F_x$ is measurable, since it is  the composition 
$\widetilde x(f(\omega,\cdot))$ of measurable functions: 
$$
	  F_x(\omega) = 
		\mathcal K^f(\omega,\{x\} ) =  \{f(\omega,x)\} =  \widetilde x(f(\omega,\cdot)).
$$

\end{proof}

Let $K$ be a compact subset of $\C^n$ and $A$ a uniform algebra on $K.$ As in \cite{G}, we have that the spectrum of a function $f\in A$ is $\sigma(f)=\{\lambda \in \C : \lambda-f$ is not invertible in $A\}$. We may consider $\sigma=\sigma(f)$ as a mapping
$$	
	\sigma(f):\Omega\rightarrow \mathcal K^\prime(\C), \quad  
		\omega\mapsto \sigma(f(\omega,\cdot)).
$$

\begin{theorem}\label{F sub f}
Suppose $X$ is a compact metric space. If $g:\Omega\times X\rightarrow\C^n$ is a random continuous function on $X$, then $X^g(\omega)$ is a random compact set in $\C^n.$ 
\end{theorem}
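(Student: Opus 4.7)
The plan is to use the second characterization of the Borel $\sigma$-algebra on $\mathcal K^\prime(\C^n)$ given in Remark \ref{characterizations}: it suffices to show that for every open set $G\subset\C^n$, the preimage
\[
\{\omega\in\Omega : X^g(\omega)\cap G\neq\emptyset\}
\]
is measurable, where $X^g(\omega) := g(\omega,X)$. First I would note that $X^g(\omega)$ is a compact subset of $\C^n$ for every $\omega$, since $g(\omega,\cdot)$ is continuous and $X$ is compact, so $X^g$ is at least well-defined as a map $\Omega\to\mathcal K^\prime(\C^n)$.

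Next, since $X$ is a compact metric space it is separable; fix a countable dense subset $\{x_j\}_{j=0}^\infty\subset X$. For any open $G\subset\C^n$, I claim the identity
\[
\{\omega : X^g(\omega)\cap G\neq\emptyset\} \;=\; \bigcup_{j=0}^\infty \{\omega : g(\omega,x_j)\in G\}.
\]
The inclusion $\supseteq$ is immediate since $g(\omega,x_j)\in X^g(\omega)$. For the reverse inclusion, suppose $g(\omega,x)\in G$ for some $x\in X$; then $g(\omega,\cdot)^{-1}(G)$ is a non-empty open subset of $X$ by continuity of $g(\omega,\cdot)$ and openness of $G$, so by density some $x_j$ lies in it, giving $g(\omega,x_j)\in G$.

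To finish, each set $\{\omega : g(\omega,x_j)\in G\}=g(\cdot,x_j)^{-1}(G)$ is measurable, because $g(\cdot,x_j)$ is measurable by the definition of a random function. The left-hand side of the identity is therefore measurable as a countable union of measurable sets, which by Remark \ref{characterizations} is exactly what is required to conclude that $X^g$ is a random compact set. The only delicate point is the verification of the identity, whose $\subseteq$ direction crucially uses both the continuity of $g(\omega,\cdot)$ (to produce a whole neighbourhood on which the function stays inside $G$) and the separability of $X$ (to intersect this neighbourhood with the chosen dense set); a reasonable alternative would be to approximate $X^g$ pointwise in the Hausdorff metric by the random compact sets $\omega\mapsto\{g(\omega,x_0),\ldots,g(\omega,x_N)\}$ coming from Lemmas \ref{singleton} and \ref{cup}, but this requires an extra argument that pointwise Hausdorff limits preserve measurability, which the present approach bypasses.
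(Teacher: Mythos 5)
Your proof is correct, but it takes a genuinely different route from the paper's. The paper builds $X^g$ as a pointwise limit in the Hausdorff metric: it takes the finite sets $X_j$ consisting of the first $j$ points of a countable dense subset of $X$, shows each $F_j(\omega)=g(\omega,X_j)$ is a random compact set via Lemma \ref{singleton} and $j$ applications of Lemma \ref{cup}, uses Lemma \ref{continuous extension} to justify $g^{\mathcal K^\prime}(\omega,X_j)\rightarrow g^{\mathcal K^\prime}(\omega,X)$, and then invokes Proposition \ref{limsmooth} (pointwise limits of measurable maps into a metric space are measurable) --- exactly the ``reasonable alternative'' you mention and set aside. Your argument instead checks measurability directly on the generating family $\{K : K\cap G\neq\emptyset\}$ of Remark \ref{characterizations}, reducing everything to the identity $\{\omega : g(\omega,X)\cap G\neq\emptyset\}=\bigcup_j g(\cdot,x_j)^{-1}(G)$, whose nontrivial inclusion you correctly justify by continuity of $g(\omega,\cdot)$ plus density. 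This is shorter and more self-contained: it bypasses the singleton/union lemmas, the continuity of the extension $g^{\mathcal K^\prime}$, and the limit theorem entirely, at the cost of invoking the standard fact that measurability of preimages of a generating class suffices (which is implicit in how the paper uses Remark \ref{characterizations} elsewhere, e.g.\ in the proof of Theorem \ref{R-hull random}). What the paper's longer route buys is reusable machinery --- the lemmas on singletons, unions, and continuous extensions are needed again later --- whereas your argument proves only this theorem, but proves it more economically.
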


\begin{proof}
Let $g:\Omega\times X\rightarrow \C^n$ be a random continuous function. 
We need only show that $X^g$ is  measurable. Let $\{x_1,x_2,\ldots\},$ be a countable (ordered) dense subset of $X;$  let $X_j$ be the set consisting of the first $j$ elements of this set; and let $g_j(\omega,x)$ be the restriction of $g(\omega,x)$ to $\Omega\times X_j.$
Clearly, for each $j,$ the function $g_j$ is a random continuous function on $X_j.$  For each $j=1,2,\cdots,$ we define  the set-function
$$
	F_j:\Omega\rightarrow \mathcal K^\prime(\C^n), \quad  \omega\mapsto g_j(\omega, X_j). 
$$
We shall show that $F_j$ is a random compact set.

By Lemma \ref{singleton}, the function $F_1$ is itself measurable (and analogously for all 1-element sets). By Lemma \ref{continuous extension}, $F_1:\Omega\rightarrow\mathcal K^\prime(\C^n)$ is a continuous random compact set. 

Denote by $C(X,\C^n)$ the set of continuous functions from $X$ to $\C^n$ and,
for $x\in X,$  denote by $\widetilde x:C(X,\C^n)\rightarrow  \mathcal K^\prime(\C^n)$ 
the mapping $h\mapsto \{h(x)\},$ for $h\in C(X,\C^n).$
Again, by Lemma \ref{singleton}, each $\widetilde x_j$ is measurable and then applying Lemma \ref{cup}, $j$ times, we obtain that $F_j$ is a random compact set. 
Since $g(\omega,\cdot)$ is continuous for each $\omega$, its extension $g^{\mathcal K^\prime}(\omega,\cdot)$ to compact sets is continuous by Lemma \ref{continuous extension}. Since $X_j\rightarrow X,$ one can then enter the limit into the function : 
$$
	\lim_{j\rightarrow \infty}F_j(\omega)=
	\lim_{j\rightarrow \infty}g^{\mathcal K^\prime}(\omega, X_j)=
	g^{\mathcal K^\prime}(\omega,\lim_{j\rightarrow \infty}X_j) = 
	g^{\mathcal K^\prime}(\omega,X)=X^g(\omega).
$$ 
We have shown that the sequence  $F_j$ of  measurable functions converges  pointwise to the function $X^g.$  Since these functions take their values in a metric space, it follows from 
Proposition \ref{limsmooth}  
that $X^g$ is  measurable, which concludes the proof. 
\end{proof}

Let $K$ be a compact subset of $\C^n$ and $A$ a uniform algebra on $K.$ If $f:\Omega\times K\rightarrow\C$ is a random element of $A,$ we define the spectrum $\sigma(f)$ to be the set of pairs $\{(\omega, \sigma(f(\omega,\cdot)): \omega\in\Omega\}.$ We may consider $\sigma=\sigma(f)$ as a mapping
$$	
	\sigma(f):\Omega\rightarrow \mathcal K^\prime(\C), \quad  
		\omega\mapsto \sigma(f(\omega,\cdot)).
$$

\begin{proposition}
If $A$ is a uniform algebra for which the maximal ideal space is a separable metric space, then the spectrum $\sigma(f)$ of a random element $f$ of $A$ is a random compact set.
\end{proposition}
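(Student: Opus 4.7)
The plan is to reduce the statement to Theorem \ref{F sub f} via the Gelfand transform. Recall from standard uniform algebra theory (see \cite{G}) that for $g \in A$ the spectrum equals the range of the Gelfand transform: $\sigma(g) = \widehat{g}(M)$. Applying this fibrewise, we get
\begin{equation*}
\sigma(f)(\omega) = \sigma(f(\omega,\cdot)) = \widehat{f}(\omega, M),
\end{equation*}
where $\widehat{f}:\Omega\times M\rightarrow \C$ is the Gelfand transform introduced before Proposition \ref{f hat}.

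Next, I would invoke Proposition \ref{f hat} to say that $\widehat{f}$ is a random continuous function on $M$. The maximal ideal space $M$ of a commutative unital Banach algebra is always compact in the Gelfand (weak-$\ast$) topology, and by the hypothesis of the proposition $M$ is a separable metric space. So $M$ is a compact metric space, which is exactly the setting needed to apply Theorem \ref{F sub f} with $X = M$, $n = 1$, and $g = \widehat{f}$.

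Applying Theorem \ref{F sub f}, the map $\omega \mapsto \widehat{f}(\omega, M) = M^{\widehat{f}}(\omega)$ is a random compact set in $\C$. Combined with the identification $\sigma(f)(\omega) = \widehat{f}(\omega, M)$, this gives exactly that $\sigma(f):\Omega\rightarrow \mathcal{K}^\prime(\C)$ is measurable, i.e., a random compact set.

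I expect the only subtlety to be the bookkeeping: verifying that $M$ genuinely satisfies the compact metric space hypothesis of Theorem \ref{F sub f} (compactness from Banach-algebra theory, metrizability and separability from the assumption), and confirming that the scalar version of Theorem \ref{F sub f} applies (it does, taking $n=1$). No new measurability argument is required beyond these two previously established results.
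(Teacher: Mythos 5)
Your proposal is correct and follows exactly the paper's own argument: identify $\sigma(f(\omega,\cdot))$ with $\widehat f(\omega,M)$ via the Gelfand transform (the paper cites Theorem 2.7 of \cite{G}), use Proposition \ref{f hat} to see that $\widehat f$ is a random continuous function on the compact metric space $M$, and conclude by Theorem \ref{F sub f}. No meaningful difference from the paper's proof.
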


\begin{proof}
By hypothesis, we know that $M$ (the maximal ideal space) is a compact metric space.

We wish to show that the mapping $\omega\mapsto \sigma(f(\omega,\cdot))$ is measurable. From Theorem 2.7 of 
\cite{G}, this is equivalent to the mapping $\omega\mapsto \widehat f(\omega,M)$. We know by Proposition \ref{f hat} that $\widehat f$ is a continuous random function over $M$. We remark that the  mapping  $\omega\mapsto \widehat f(\omega,M)$ is in fact the same as the mapping
$\omega\mapsto M^{\widehat f}(\omega).$ By Theorem \ref{F sub f}, $M^{\widehat f}$
is a random compact set, and thus $\sigma(f)$ is a random compact set.
\end{proof}

The preceding proposition applies, for example, if $A=C(K)$ or $A=P(K).$

\begin{proposition}
	For a compact set $K\subset\C^n,$ if $A=C(K)$ or $A=P(K),$  the joint spectrum $\sigma(f_1,\ldots,f_m)$ of finitely many 
random elements
 $f_1,\ldots,f_m$ of $A$  is a random compact set.
\end{proposition}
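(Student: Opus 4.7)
The plan is to reduce the joint-spectrum statement to the single-function case by reinterpreting $(f_1,\ldots,f_m)$ as a single $\C^m$-valued random continuous function on the maximal ideal space, and then invoking Theorem \ref{F sub f} exactly as in the preceding proposition.

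First, I would verify that for $A=C(K)$ we have $M=K$, which is a compact metric space, while for $A=P(K)$ the maximal ideal space $M$ is the polynomially convex hull of $K$ in $\C^n$, again a compact metric space. In either case, $M$ satisfies the separability hypothesis needed to invoke Theorem \ref{F sub f}.

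Next, I would recall the Gelfand-theoretic description of the joint spectrum: for elements $g_1,\ldots,g_m$ of a commutative unital Banach algebra with maximal ideal space $M$,
$$
\sigma(g_1,\ldots,g_m) = \{(\phi(g_1),\ldots,\phi(g_m)) : \phi\in M\}\subset \C^m.
$$
Fixing $\omega\in\Omega$ and applying this to $g_i=f_i(\omega,\cdot)$ gives
$$
\sigma(f_1(\omega,\cdot),\ldots,f_m(\omega,\cdot)) = \{(\widehat{f_1}(\omega,\phi),\ldots,\widehat{f_m}(\omega,\phi)) : \phi\in M\}.
$$
I would then assemble the Gelfand transforms into a single map
$$
\widehat F:\Omega\times M\rightarrow \C^m, \quad (\omega,\phi)\mapsto (\widehat{f_1}(\omega,\phi),\ldots,\widehat{f_m}(\omega,\phi)).
$$
By Proposition \ref{f hat}, each component $\widehat{f_i}$ is a random continuous function on $M$, so for each fixed $\phi$ the map $\omega\mapsto \widehat F(\omega,\phi)$ has measurable components and is therefore measurable as a $\C^m$-valued map, while for each fixed $\omega$ the map $\phi\mapsto \widehat F(\omega,\phi)$ is continuous. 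Hence $\widehat F$ is a random continuous function from $\Omega\times M$ into $\C^m$.

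Finally, I would apply Theorem \ref{F sub f} with $X=M$ and $g=\widehat F$: it yields that $M^{\widehat F}:\omega\mapsto \widehat F(\omega,M)$ is a random compact set in $\C^m$. By the Gelfand identification above, $M^{\widehat F}(\omega)$ is exactly $\sigma(f_1(\omega,\cdot),\ldots,f_m(\omega,\cdot))$, so $\sigma(f_1,\ldots,f_m)$ is a random compact set. The main obstacle I anticipate is strictly bookkeeping: confirming that $M$ is a compact metric space for both algebras $C(K)$ and $P(K)$ (so Theorem \ref{F sub f} applies), and that vector-valued random continuity of $\widehat F$ really is inherited from the componentwise random continuity of the $\widehat{f_i}$, essentially via Corollary \ref{maps F to f}. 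Once these points are settled, the proof is a direct reduction to the single-function proposition above.
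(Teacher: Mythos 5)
Your proposal is correct and follows essentially the same route as the paper's proof: assemble the Gelfand transforms into a single $\C^m$-valued map $\psi(\omega,\phi)=(\widehat f_1(\omega,\phi),\ldots,\widehat f_m(\omega,\phi))$, use the componentwise criteria (Propositions \ref{component} and \ref{cont}) together with Proposition \ref{f hat} to see that $\psi$ is a random continuous mapping on $M_A$, and then apply Theorem \ref{F sub f} to $\omega\mapsto\psi(\omega,M_A)$. Your additional remark identifying $M$ concretely as $K$ or $\widehat K$ is a harmless (and welcome) piece of bookkeeping that the paper leaves implicit.
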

 
\begin{proof}
	We define the function $\psi : \omega\times M_A\rightarrow \C^m$ by setting $$\psi(\omega,\phi)=(\widehat{f}_1(\omega,\phi),\widehat{f}_2(\omega,\phi),\dots,\widehat{f}_m(\omega,\phi)).$$
	
	For fixed $\phi$, by Proposition \ref{f hat} each $\widehat f_i(\cdot,\phi)$ is a measurable function. From Proposition \ref{component} in the appendix, we have that $\psi(\cdot,\phi)$ is a measurable mapping. For fixed $\omega$, each $\widehat f_i(\omega,\cdot)$ is a continuous function. From Proposition \ref{cont} in the appendix, we have that $\psi(\omega,\cdot)$ is a continuous mapping. Thus, $\psi$ is a random continuous mapping. 
	
	From the definition of the joint spectrum \cite{G} we have 
that $\sigma(f_1,\dots,f_m)(\omega)=\psi(\omega,M_A)$. We see that the  mapping $\omega \mapsto \psi(\omega, M_A)$ is in fact the mapping $\omega \mapsto M_A^{\psi}(\omega)$. We now apply Theorem \ref{F sub f}, and we have that the joint spectrum is a random compact set.
\end{proof}

We define a compact transformation as a function $T : \mathcal K'(X)\rightarrow \mathcal K'(X)$. We say that a compact transformation is randomness-preserving if 
$K$ being a random compact set implies that $T(K)$ is a random compact set. It is important that this property not depend of a specific structure of $\Omega$, it must work for any measurable space.

\begin{lemma}\label{randomness-preserving}
    A compact transformation is randomness-preserving if and only if it is a measurable function.
\end{lemma}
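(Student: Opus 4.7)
The plan is to prove both directions by reducing everything to the definition of randomness-preserving, with the reverse direction hinging on a clever choice of test space, namely $\mathcal K'(X)$ itself with the identity map.

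For the easier direction ($\Leftarrow$), suppose $T$ is measurable as a function from $\mathcal K'(X)$ to $\mathcal K'(X)$ (with the Borel $\sigma$-algebras from Remark \ref{characterizations}). Given any measurable space $(\Omega, \mathcal A)$ and any random compact set $k:\Omega\to\mathcal K'(X)$, the function $T\circ k:\Omega\to\mathcal K'(X)$ is measurable because, for every Borel set $E\subset\mathcal K'(X)$, we have $(T\circ k)^{-1}(E)=k^{-1}(T^{-1}(E))$, where $T^{-1}(E)$ is Borel by measurability of $T$, and hence its preimage under $k$ lies in $\mathcal A$. Thus $T(k)$ is a random compact set and $T$ is randomness-preserving.

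For the converse ($\Rightarrow$), the key observation is that the definition of randomness-preserving quantifies over \emph{all} measurable spaces $\Omega$, so we are free to choose one that turns the identity into a random compact set. Take $\Omega=\mathcal K'(X)$ equipped with its Borel $\sigma$-algebra, and let $k=\mathrm{id}_{\mathcal K'(X)}$. Then $k$ is trivially measurable, so by hypothesis $T\circ k = T$ is measurable as a function from $\Omega=\mathcal K'(X)$ to $\mathcal K'(X)$, which is precisely the statement that $T$ is Borel measurable.

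No serious obstacle is expected. The only point that deserves emphasis is the insistence in the definition that randomness-preservation hold for \emph{any} measurable space $\Omega$, which is exactly what allows the identity-map trick in the reverse direction; the statement would fail if $\Omega$ were fixed in advance. The proof uses nothing beyond the standard fact that a composition of measurable functions is measurable, together with the fact that $\mathcal K'(X)$ carries a canonical Borel structure (via the Hausdorff metric, cf.\ Remark \ref{characterizations}).
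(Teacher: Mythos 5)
Your proof is correct and follows essentially the same route as the paper's: the identity map on $\Omega=\mathcal K'(X)$ with its Borel $\sigma$-algebra for the forward direction, and composition of measurable functions for the converse. Your write-up is somewhat more explicit about why the quantification over all measurable spaces is what makes the identity-map trick legitimate, but the argument is the same.
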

\begin{proof}
    Suppose a compact transformation $T$ is randomness-preserving. This means that for any choice of random events set $(\Omega,\mathcal A)$, the measurability of the mapping $\omega \mapsto K(\omega)$ implies the measurability of the mapping $\omega\mapsto T(K(\omega))$.
    
We may thus choose $\Omega=\mathcal K'(X)$ and $\mathcal A=\mathcal B(\mathcal K'(X))$. We now look at the 
identity mapping $I:\Omega\rightarrow \mathcal K'(X)$  mapping $K$ to $K.$ This mapping defines a random compact set, since the identity function is certainly measurable. 
Thus, as $T$ is randomness-preserving, the mapping $K\mapsto T(I(K))=T(K)$ is measurable.

    The converse follows directly by composition of measurable functions.
\end{proof}

For $K\in \mathcal K^\prime(\C^n),$ we denote by $\widehat K$ the polynomially convex hull of $K.$ It is defined as $$\widehat K = \{z \in C^n : \abs{p(z)} \leq \max_{x\in K} \abs{p(x)} \text{ } \forall p \in P(\C^n)\}$$where $P(\C^n)$ is the set of complex polynomials from $\C^n$ to $\C$. We would like to show that the polynomially convex hull of a random compact set is also a random compact set.

\begin{lemma} \label{lemmaPolyHull}
Let $K$ be a compact set of $\C^n$ and let $\mathcal P^\Q(\C^n)$ denote the polynomials in $\C^n$ whose coefficients have rational real and imaginary part. Then,
\begin{equation} \label{hullQ}
    \widehat K = \{z \in C^n : \abs{p(z)} \leq \max_{x\in K} \abs{p(x)} \text{ } \forall p \in P^\Q(\C^n)\}.
\end{equation}
\end{lemma}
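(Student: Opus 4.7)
The plan is to prove the two inclusions separately. Let $\widehat K^{\Q}$ denote the right-hand side of (\ref{hullQ}). The inclusion $\widehat K \subseteq \widehat K^{\Q}$ is immediate, since $\mathcal P^\Q(\C^n) \subseteq P(\C^n)$, so any point satisfying the inequality for every complex polynomial in particular satisfies it for every rational-coefficient polynomial.

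For the reverse inclusion $\widehat K^{\Q} \subseteq \widehat K$, I would use a density argument. Fix $z \in \widehat K^{\Q}$ and an arbitrary polynomial $p \in P(\C^n)$; the goal is to show $|p(z)| \leq \max_{x \in K} |p(x)|$. Write $p(w) = \sum_{\alpha \in \Lambda} c_\alpha w^\alpha$, where $\Lambda$ is a finite index set. Since $K \cup \{z\}$ is compact, there is a constant $M$ such that $|w^\alpha| \leq M$ for all $w \in K \cup \{z\}$ and all $\alpha \in \Lambda$. Given $\varepsilon > 0$, choose for each $\alpha \in \Lambda$ a complex number $c_\alpha^{\varepsilon}$ with rational real and imaginary parts satisfying $|c_\alpha - c_\alpha^{\varepsilon}| < \varepsilon / (M \cdot |\Lambda|)$, and set $p_\varepsilon(w) = \sum_{\alpha \in \Lambda} c_\alpha^{\varepsilon} w^\alpha \in \mathcal P^\Q(\C^n)$.

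Then $|p(w) - p_\varepsilon(w)| < \varepsilon$ for every $w \in K \cup \{z\}$, so
\[
|p(z)| \leq |p_\varepsilon(z)| + \varepsilon \leq \max_{x \in K} |p_\varepsilon(x)| + \varepsilon \leq \max_{x \in K} |p(x)| + 2\varepsilon,
\]
where the middle inequality uses $z \in \widehat K^{\Q}$ and $p_\varepsilon \in \mathcal P^\Q(\C^n)$. Letting $\varepsilon \to 0$ gives $|p(z)| \leq \max_{x \in K} |p(x)|$, and since $p \in P(\C^n)$ was arbitrary, we conclude $z \in \widehat K$.

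There is no real obstacle here: the argument is a standard density/compactness perturbation. The only point to watch is that the approximation must hold uniformly on both $K$ and at $z$ simultaneously, but treating $K \cup \{z\}$ as a single compact set makes this automatic.
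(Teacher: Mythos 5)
Your proof is correct and follows essentially the same route as the paper: both hinge on uniformly approximating an arbitrary polynomial by one in $\mathcal P^\Q(\C^n)$ on the compact set $K\cup\{z\}$. The only cosmetic difference is that the paper argues the contrapositive (a point outside $\widehat K$ is outside $\widehat K_\Q$, using an explicit margin $\eta$), whereas you prove the inclusion directly and let $\varepsilon\to 0$.
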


\begin{proof}
    Let $\varepsilon > 0$ and let $\widehat K_\Q$ denote the right side of equation \ref{hullQ}. From the definition of $\widehat K$, it is clear that $\widehat K \subset \widehat K_\Q$. We wish to prove that $\widehat K_\Q \subset \widehat K$. Let $P$ be a polynomial of degree $N$ which we write as
    \begin{equation*}
        P(z) = \sum_{k=1}^N \sum_{\abs{\alpha} = k} a_\alpha z^\alpha.
    \end{equation*}
    Let $M_\alpha$ denote the maximum of $\abs{z^\alpha}$ over all $z$ in $K$. By the density of the rationals, there exists $b_\alpha$ with rational real and imaginary part such that, 
if $|\alpha|=k$ and $\mu(k)$ is the number of indices $\alpha$ with $|\alpha|=k,$ then
    \begin{equation*}
        \abs{a_\alpha - b_\alpha} < \frac{\varepsilon}{N\mu(k)(M_\alpha + 1)}.
    \end{equation*}
    Let $Q$ denote the polynomial with coefficients $b_\alpha$. Thus,
    \begin{align*}
        \abs{P(z) - Q(z)} &= \abs{\sum_{k=1}^N \sum_{\abs{\alpha} = k} (a_\alpha - b_\alpha) z^\alpha}\\
        &\leq \sum_{k=1}^N \sum_{\abs{\alpha} = k} \abs{a_\alpha - b_\alpha} \abs{z^\alpha}\\
        &<\sum_{k=1}^N \sum_{\abs{\alpha} = k} \frac{\varepsilon}{N\mu(k)(M_\alpha + 1)} \abs{z^\alpha}\\
       &< \varepsilon
    \end{align*}
    and so we conclude that for every polynomial $P$ and $\varepsilon > 0$, there exists a polynomial $Q$ in $\mathcal P^\Q(\C^n)$ such that $\abs{P(z) - Q(z)} < \varepsilon$ for all $z \in K$.
    
    Let $z_0$ be a point in $\widehat K^c$. Therefore there exists a polynomial $P_0$ with maximum in $K$ denoted by $M$ such that $\abs{P_0(z_0)} > M$. This means there exists a $\eta > 0$ such that $\abs{P_0(z_0)} = M + \eta$. By the previous explanation but applied to the compact set $K \cup \{z_0\}$, there exists a $Q_0$ in $\mathcal P^\Q(\C^n)$ such that $\abs{P_0(z) - Q_0(z)} < \eta/4$. Thus, using the reverse triangle inequality, we have that 
    \begin{equation*}
        \abs{Q_0(z_0)} > M + \frac{3\eta}{4} > \max_{x \in K} \abs{Q_0(x)} + \frac{\eta}{2} > \max_{x \in K} \abs{Q_0(x)}
    \end{equation*}
    and so $z_0$ is in $\widehat K_\Q^c$. 
\end{proof}

We denote by $\mathcal K(X)$ the set $\mathcal K^\prime(X)\cup\{\emptyset\}$ of {\em all} (including the empty set) compact subsets of $X.$ 
Let $W$ be a measurable space.
We define as a pseudo-random compact set in $X$ a mapping 
$$f : W \rightarrow\mathcal K(X),$$ 
if pre-images of Borel subsets of $\mathcal K^\prime(X)$ are measurable. 
Here $W$ can be a metric space with the Borel sets, or $W=\Omega$ . 
We see that every measuralble compact set in $\C^n$ is a pseudo-random compact set. If $f$ is a pseudo-random compact set and $f^{-1}(\{\emptyset\})$ is empty, then $f$ is a measurable compact set (here we see $f^{-1}(\{\emptyset\})$ as the pre-image of the empty compact set, and not as the the pre-image of an empty set of compact sets). 
We have directly that the composition of a measurable function followed by
a pseudo-random compact set is a pseudo-random compact set.

We define as a pseudo-continuous compact-valued function a function $f: W\rightarrow\mathcal K(\C^n)$, where $W$ is a metric space, if for all $w\in W$ such that $f(w)\neq \emptyset$, and $\varepsilon>0$, there exists $\delta>0$ such that if $d(w,y)<\delta$ and $f(y)\neq \emptyset$, then $d_H(f(w),f(y))<\varepsilon$, and if moreover the set $f^{-1}(\emptyset)$ is a measurable set.

\begin{lemma}\label{pseudo union}
	If $\{K_i\}_{i=0}^\infty$ be a sequence of pseudo-random compact sets, then $k(\omega)=\bigcap_{i=0}^\infty K_i(\omega)$ is a pseudo-random compact set. If we know that $K(\omega)=\bigcup_{i=0}^\infty K_i(\omega)$ is a compact set for each $\omega$, then $K$ is a pseudo-random compact set.
\end{lemma}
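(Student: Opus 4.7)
The plan is to apply the characterization from Remark~\ref{characterizations}: $\mathcal B(\mathcal K'(X))$ is generated by the sets $\mathcal U(G) = \{K \in \mathcal K'(X) : K \cap G \neq \emptyset\}$, or equivalently by the sets $\mathcal V(G) = \{K \in \mathcal K'(X) : K \subset G\}$, as $G$ ranges over open subsets of $X$. Since pre-images under any function form a $\sigma$-algebra, in both cases it suffices to verify that pre-images of these generators are measurable.

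For the union, I would use the generators $\mathcal U(G)$. For $K(\omega) = \bigcup_i K_i(\omega)$ (compact for each $\omega$ by hypothesis), the identity that $K(\omega) \cap G \neq \emptyset$ if and only if some $K_i(\omega) \cap G \neq \emptyset$ (with empty $K_i(\omega)$ handled vacuously, since $\mathcal U(G) \subset \mathcal K'(X)$) gives
\begin{equation*}
K^{-1}(\mathcal U(G)) = \bigcup_i K_i^{-1}(\mathcal U(G)),
\end{equation*}
a countable union of measurable sets, which settles this half.

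For the intersection $k(\omega) = \bigcap_i K_i(\omega)$, I would use the generators $\mathcal V(G)$ together with the finite intersection property: $\bigcap_i (K_i(\omega) \cap G^c) = \emptyset$ if and only if some finite subintersection is empty, yielding
\begin{equation*}
\{\omega : k(\omega) \subset G\} = \bigcup_n \{\omega : K_0(\omega) \cap \cdots \cap K_n(\omega) \subset G\},
\end{equation*}
and the same reasoning with $G = \emptyset$ shows $\{\omega : k(\omega) = \emptyset\}$ is also a countable union of such finite-intersection events. Since $k^{-1}(\mathcal V(G)) = \{\omega : k(\omega) \subset G\} \setminus \{\omega : k(\omega) = \emptyset\}$, the problem reduces to showing that every finite intersection $L_n = K_0 \cap \cdots \cap K_n$ is pseudo-random; by induction on $n$ this reduces to the binary case.

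The main obstacle is therefore this base case: proving $K_1 \cap K_2$ pseudo-random when both $K_i$ are. I would approach it by first showing that the intersection of a pseudo-random $K$ with a fixed closed set $F$ is pseudo-random, using that $K \cap F \subset G$ translates into $K \subset F^c \cup G$, an open condition directly accessible from the pseudo-randomness of $K$. Then for $F = G^c$,
\begin{equation*}
K_1(\omega) \cap K_2(\omega) \subset G \quad \text{iff} \quad K_2(\omega) \subset G \ \text{ or }\ d_X(K_1(\omega), K_2(\omega) \cap F) > 0,
\end{equation*}
where $d_X$ is the (jointly continuous) minimum-distance function on $\mathcal K'(X) \times \mathcal K'(X)$. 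The first disjunct is measurable by pseudo-randomness of $K_2$, and the second is measurable using the previous step together with Lemma~\ref{ksep}, which identifies $\mathcal B(\mathcal K'(X)^2)$ with the product $\sigma$-algebra and so makes the pair $(K_1, K_2 \cap F)$ jointly measurable on $\{\omega : K_1(\omega), K_2(\omega) \cap F \neq \emptyset\}$. Handling $\{\omega : K_1(\omega) \cap K_2(\omega) = \emptyset\}$ by the same argument with $G = \emptyset$ closes the induction and hence the proof.
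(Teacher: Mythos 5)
Your proof is correct, and it is considerably more detailed than what the paper offers: the paper's ``proof'' of this lemma is a single sentence deferring to the (themselves unproved) Lemmas \ref{cup} and \ref{countable cup}, so there is essentially no written argument to compare against. Your union half --- testing pre-images only on the generators $\{K\in\mathcal K^\prime(X) : K\cap G\neq\emptyset\}$ and using that a union meets $G$ iff some term does --- is surely what the authors intended. The intersection half is where you supply genuine content that the paper glosses over: the reduction of the countable intersection to finite ones via the finite intersection property of compact sets, and of the binary case to the continuity of the minimum-distance function on $\mathcal K^\prime(X)\times\mathcal K^\prime(X)$, are exactly the non-trivial steps (measurability of intersections of random compact sets is a known but not immediate fact; cf.\ Molchanov's book, which the paper cites). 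Two small points to tighten. First, the displayed equivalence for $K_1(\omega)\cap K_2(\omega)\subset G$ fails literally when $K_1(\omega)=\emptyset$ (the left side then holds while the distance is undefined); you do note that the empty cases are treated separately, but the equivalence should be stated only on the set where $K_1(\omega)$ and $K_2(\omega)\cap F$ are both non-empty. Second, identifying $\mathcal B(\mathcal K^\prime(X)^2)$ with the product $\sigma$-algebra requires separability of $\mathcal K^\prime(X)$, i.e.\ Lemma \ref{ksep}, so your argument as written applies to $X=\C^n$ or $\R^n$; this is the only case the paper ever uses, so it is harmless, but it should be said explicitly since the lemma is stated for a general $X$.
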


\begin{proof}
	This proof follows from the fact that we supposed that the pre-images of Borel subsets are measurable. We may use a similar proof method to Lemmas \ref{cup} and \ref{countable cup}.

\end{proof}

\begin{lemma}\label{pseudo cont}
Let $f : X\rightarrow\mathcal K(\C^n)$ be a pseudo-continuous compact-valued function. Then it is also a pseudo-random compact set.
\end{lemma}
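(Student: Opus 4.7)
The plan is to split the domain according to whether $f$ takes the empty value, show that the pseudo-continuity hypothesis becomes ordinary continuity on the complement, and then invoke the standard fact that continuous maps between metric spaces are Borel measurable.

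First I would set $X_0 := f^{-1}(\emptyset)$ and $X' := X \setminus X_0$. By the definition of a pseudo-continuous compact-valued function, $X_0$ is a measurable (hence Borel) subset of the metric space $X$, so $X'$ is Borel as well. On $X'$, the map $f$ takes values in $\mathcal K^\prime(\C^n)$. Moreover, the pseudo-continuity condition at a point $w \in X'$ upgrades to genuine continuity of the restriction $f|_{X'}$: for every $\varepsilon > 0$, the $\delta > 0$ supplied by pseudo-continuity forces $d_H(f(w), f(y)) < \varepsilon$ whenever $d(w, y) < \delta$ and $f(y) \neq \emptyset$, and restricting $y$ to $X'$ makes the second condition automatic. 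Hence $f|_{X'} : X' \to \mathcal K^\prime(\C^n)$ is a continuous function between metric spaces.

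Next, let $B$ be an arbitrary Borel subset of $\mathcal K^\prime(\C^n)$. Since continuous maps between metric spaces are Borel measurable, $(f|_{X'})^{-1}(B)$ is Borel in $X'$ with its subspace topology. Because $X'$ is itself Borel in $X$, any Borel subset of $X'$ is also Borel in $X$. Finally, since $\emptyset \notin B$, no point of $X_0$ can be mapped into $B$ by $f$, and therefore
$$
	f^{-1}(B) = (f|_{X'})^{-1}(B),
$$
which we have just shown to be Borel in $X$. This is exactly the condition required for $f$ to be a pseudo-random compact set.

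There is no real obstacle here: the argument is essentially bookkeeping. The only subtlety is the separate treatment of points where $f$ takes the empty value, but this is handled precisely by the clause in the definition of pseudo-continuity that requires $f^{-1}(\emptyset)$ to be measurable. Without that clause the decomposition $X = X_0 \sqcup X'$ would not be valid at the level of measurable sets, and the reduction to the continuous case would fail.
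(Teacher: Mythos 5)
Your proof is correct and follows essentially the same route as the paper's: both restrict $f$ to $X\setminus f^{-1}(\emptyset)$, observe that pseudo-continuity becomes ordinary continuity there, and use measurability of $f^{-1}(\emptyset)$ together with Borel measurability of continuous maps to conclude. Your version is slightly more explicit about why preimages of Borel sets of $\mathcal K^\prime(\C^n)$ miss $X_0$ entirely, but the argument is the same.
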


\begin{proof}
    By our definition of pseudo-continuous function, we have that the restriction $f|_{X\setminus f^{-1}(\emptyset)}$ is a continuous function over $X\setminus f^{-1}(\emptyset).$
Thus, if $O$ an open set of $\mathcal B(\C^n),$ then $f^{-1}(O)$ is an open set relative to $X\setminus f^{-1}(\emptyset)$. 
From the definition of pseudo-continuity, 
$f^{-1}(\emptyset)$ is measurable in $X,$ so $X\setminus f^{-1}(\emptyset)$ is also measurable. We have that  $f^{-1}(O)$ is a measurable subset of the measurable set  $X\setminus f^{-1}(\emptyset).$ 
Thus, $f^{-1}(O)$ is measurable relative to $X.$
As the open sets generate the Borel subsets, all pre-images of measurable subsets of $\mathcal K(\C^n)$ are measurable.
\end{proof}

\begin{theorem}\label{P-hull random}
Let $K$ be a random compact set of $\C^n$. Then, its polynomially convex hull $\widehat K$, defined pointwise as $\widehat K(\omega)=\widehat{K(\omega)}$, is a random compact set.
\end{theorem}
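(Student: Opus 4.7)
The plan is to exploit Lemma \ref{lemmaPolyHull} to write $\widehat K(\omega)$ as a countable intersection of closed polynomial sublevel sets, and then use the pseudo-random compact set machinery (especially Lemma \ref{pseudo union}) to handle the fact that these sublevel sets are merely closed rather than compact. By Lemma \ref{lemmaPolyHull},
$$
\widehat{K(\omega)} \;=\; \bigcap_{p \in \mathcal P^\Q(\C^n)} \{z \in \C^n : |p(z)| \leq M_p(\omega)\},
\qquad M_p(\omega) := \max_{x \in K(\omega)} |p(x)|,
$$
which reduces the problem to a countable family indexed by $\mathcal P^\Q(\C^n)$.

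First I would show that each function $M_p : \Omega \to [0,\infty)$ is measurable. By Lemma \ref{continuous extension}, the extension $p^{\mathcal K^\prime} : \mathcal K^\prime(\C^n) \to \mathcal K^\prime(\C)$, $L \mapsto p(L)$, is continuous; composed with the measurable map $\omega \mapsto K(\omega)$ it yields a random compact set $\omega \mapsto p(K(\omega))$ in $\C$. The map $\mathcal K^\prime(\C) \to [0,\infty)$, $L \mapsto \max_{\lambda\in L}|\lambda|$, is continuous in the Hausdorff metric, so $M_p$ is measurable as a composition of measurable functions.

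Next, for each $m \in \N$ and each $p \in \mathcal P^\Q(\C^n)$, I would truncate and set
$$
A_{p,m}(\omega) := \{z \in \overline B(0,m) : |p(z)| \leq M_p(\omega)\} \in \mathcal K(\C^n),
$$
allowing the empty value. This factors as $A_{p,m} = \psi_{p,m} \circ M_p$, where $\psi_{p,m}: [0,\infty) \to \mathcal K(\C^n)$, $t \mapsto \{z \in \overline B(0,m) : |p(z)| \leq t\}$. Using the generators of $\mathcal B(\mathcal K^\prime(\C^n))$ described in Remark \ref{characterizations}, one checks measurability of $\psi_{p,m}$ by the computation
$$
\psi_{p,m}^{-1}\bigl(\{L : L \cap G \neq \emptyset\}\bigr) \;=\; \bigl\{t \in [0,\infty) : t \geq \inf_{z \in G \cap \overline B(0,m)} |p(z)|\bigr\}
$$
for each open $G \subset \C^n$, which is an interval; and the preimage of $\{\emptyset\}$ is $\{t < \min_{\overline B(0,m)}|p|\}$, hence measurable. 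Thus $A_{p,m}$ is a pseudo-random compact set.

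By Lemma \ref{pseudo union}, the countable intersection $B_m(\omega) := \bigcap_{p \in \mathcal P^\Q(\C^n)} A_{p,m}(\omega) = \overline B(0,m) \cap \widehat{K(\omega)}$ is pseudo-random. Since $\widehat{K(\omega)} = \bigcup_{m=1}^\infty B_m(\omega)$ is itself compact, a second application of Lemma \ref{pseudo union} yields that $\widehat K$ is a pseudo-random compact set. Because $K(\omega) \subset \widehat{K(\omega)}$ is non-empty for every $\omega$, the remark following the definition of pseudo-random compact set upgrades this to genuine measurability, so $\widehat K$ is a random compact set. The principal obstacle is precisely the non-compactness of the defining polynomial sublevel slabs; the countable reduction to $\mathcal P^\Q(\C^n)$ handles the cardinality issue, and the ball-truncation trick, combined with the pseudo-random framework, bridges closed and compact set measurability.
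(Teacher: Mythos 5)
Your proposal is correct and follows essentially the same route as the paper: reduce to the countable family $\mathcal P^\Q(\C^n)$ via Lemma \ref{lemmaPolyHull}, truncate the closed sublevel sets by balls $\overline B(0,m)$, factor each truncated map through the measurable composition $\omega\mapsto K(\omega)\mapsto M_p(\omega)\mapsto\psi_{p,m}(M_p(\omega))$ using monotonicity in $t$ to check preimages of the generating sets of Remark \ref{characterizations}, and finish with two applications of Lemma \ref{pseudo union} plus non-emptiness. The only cosmetic imprecision is that $\psi_{p,m}^{-1}(\{L : L\cap G\neq\emptyset\})$ may be the open rather than the closed half-line when the infimum is not attained, but as you note it is an interval either way, which is exactly how the paper handles it.
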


\begin{proof} 
   
    Let $N_p^K = \{z : \abs{p(z)} \leq \max_{x \in K} \abs{p(x)}\}$. Then, by  Lemma \ref{lemmaPolyHull},
    \begin{equation*}
        \widehat K(\omega) = \bigcap_{p \in P^\Q(\C^n)} N_p^{K(\omega)}.
    \end{equation*}
    The function $|p(\cdot)|$ is  continuous and thus
$$N_p^{K(\omega)}=|p(\cdot)|^{-1}(\overline B_{\max_{x \in K(\omega)} \abs{p(x)}}(0))$$
is a closed set as the pre-image of a closed set. But $N_p^K$ is not necessarily bounded. To solve this, we may write :
      \begin{equation}\label{khat}
        \widehat K(\omega) = \bigcup_{i=1}^\infty \bigcap_{p \in P^\Q(\C^n)} 						\left(N_p^{K(\omega)}\cap \overline{B}_i(0)\right).
    \end{equation}
This is true since for every $\omega$, if $i$ is  great enough, the compact set $\widehat K(\omega)$ is contained in the ball of radius $i$. Each $N_p^{K(\omega)}\cap \overline{B}_i(0)$ is compact, since it is  the intersection of a closed set with a compact set.
    
    Choose a rational non-constant polynomial $p$ and a natural $i$. We shall show that the mapping $\omega\mapsto N_p^{K(\omega)}\cap \overline{B}_i(0)$ is a pseudo-random compact set. It will follow from Lemma \ref{pseudo union} that $\bigcap_{p \in P^\Q(\C^n)} (N_p^{K(\omega)}\cap \overline{B}_i(0))$ is a pseudo-random compact set, since the intersection is countable ($P^\Q(\C^n)$ is in bijection with the set $\mathcal V$ where $n = 2$ in the proof of Lemma \ref{ksep}). Then, as we already know that $\widehat K(\omega)$ is compact and non-empty, we can apply Lemma \ref{pseudo union} to prove the theorem.
    
   We consider several auxiliary  mappings. We already know the mapping $\omega\mapsto K(\omega)$ is measurable, by hypothesis.
As $|p(\cdot)|$ is continuous, we know from Lemma \ref{continuous extension} that $K\mapsto |p|(K)$ is a continuous mapping from $\mathcal K^\prime (\C^n)$ to $\mathcal K^\prime(\R)$. Also, the mapping $K\mapsto \max\{x:x\in K\}$ between $\mathcal K^\prime(\R)$ and $\R$ is also continuous. Let $\varepsilon>0$ and fix $K_0\in \mathcal K^\prime(C^n)$. Suppose $d_H(K_0,K) < \varepsilon$. Then, for all $x \in K_0$ there is a $y \in K$, and for all $y \in K$ there is a $x \in K_0$, such that in both cases $\abs{x-y} < \varepsilon$. It follows that
    \begin{equation*}
        |\max\{x:x\in K_0\}-\max\{x:x\in K\}|<\varepsilon.
    \end{equation*}

Denote $W_p^M=\{z : |p(z)|\leq M\}$. We shall now show that the mapping 
$W: \R \rightarrow \mathcal{K}(\C^n)$ which maps $M$ to $W_p^M \cap \overline B_i(0)$ is pseudo-random.
Let $O$ be an open set of $\C^n$. Denote as before, for $O$ open in $\C^n$, the sets 
\begin{equation*}
H_{O} = \{K \in \mathcal{K}^\prime(\C^n) : K \cap O\neq \emptyset \},
\end{equation*}
which generate the Borel subsets of $\mathcal K^\prime(\C^n)$. 
Since $W(M) \subset W(N)$ whenever $M \leq N$ (by continuity of $\abs{p}$), we have that
\begin{equation*}
\{M : W(M) \in H_{O}\} = \{M : W(M) \cap O\neq \emptyset\} \in \{[M_0, \infty), (M_0, \infty),\emptyset\},
\end{equation*}
for some $M_0 \in \R$. The mapping $W$ is pseudo-random as a mapping from $\R$ to $\mathcal K(\C^n).$

We can now see that the mapping $\omega \mapsto N_p^{K(\omega)}\cap \overline B_i(0)$ is in fact the sequence of mappings
    \begin{equation*}
        \omega\mapsto K(\omega)\mapsto |p(K(\omega))|\mapsto \max\{x \in |p(K(\omega))|\}\mapsto W_p^{\max\{x \in |p(K(\omega))|\}}\cap \overline B_i(0)
    \end{equation*}
    and is therefore pseudo-random. Being always non-empty, it is random.
    
As noted earlier, this proves the theorem.
\end{proof}

\begin{corollary}\label{P-hull measurable} 
The mapping 
$$
	\mathcal K^\prime(\C^n)\rightarrow\mathcal K^\prime(\C^n), \quad K\mapsto \widehat K
$$
is measurable.
\end{corollary}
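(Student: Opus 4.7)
The plan is to deduce this directly from Theorem \ref{P-hull random} by invoking the equivalence established in Lemma \ref{randomness-preserving}. Theorem \ref{P-hull random} shows that for any measurable space $(\Omega,\mathcal A)$ and any random compact set $K : \Omega \to \mathcal K^\prime(\C^n)$, the pointwise polynomial hull $\omega \mapsto \widehat{K(\omega)}$ is again a random compact set. In other words, the compact transformation $T : \mathcal K^\prime(\C^n) \to \mathcal K^\prime(\C^n)$ defined by $T(K) = \widehat K$ is randomness-preserving in the sense of the discussion preceding Lemma \ref{randomness-preserving}.

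Once we have randomness-preservation, Lemma \ref{randomness-preserving} immediately gives that $T$ is a measurable function, which is exactly the conclusion of the corollary. So the proof really is a one-line composition: apply Theorem \ref{P-hull random} to conclude $T$ is randomness-preserving, then apply Lemma \ref{randomness-preserving} to conclude $T$ is measurable.

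There is no serious obstacle here since all the work has been done upstream. The only point worth spelling out (and the only step that might look like content) is to confirm that Theorem \ref{P-hull random} was indeed proved for an arbitrary measurable space $(\Omega,\mathcal A)$, with no reliance on a specific structure; the proof of Lemma \ref{randomness-preserving} specializes to $\Omega = \mathcal K^\prime(\C^n)$ with the Borel $\sigma$-algebra and applies $T$ to the identity random compact set, so generality of the hypothesis on $(\Omega,\mathcal A)$ is what makes the deduction work. I would write the proof in two sentences, noting this reduction, and close.
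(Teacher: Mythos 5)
Your proposal is correct and coincides with the paper's own proof, which likewise deduces the corollary by combining Theorem \ref{P-hull random} (the hull transformation is randomness-preserving) with Lemma \ref{randomness-preserving} (randomness-preserving is equivalent to measurable). Your additional remark that the deduction hinges on Theorem \ref{P-hull random} holding for an arbitrary measurable space $(\Omega,\mathcal A)$ is exactly the point the paper builds into its definition of a randomness-preserving transformation.
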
 

\begin{proof} This follows from Lemma \ref{randomness-preserving} and Theorem \ref{P-hull random}.
\end{proof}

For a random compact set $K:\mathcal A\rightarrow\mathcal K^\prime(\C^n),$ we define the random polynomially convex hull as the function $R\widehat K:\mathcal A\rightarrow \mathcal K^\prime(\C^n),$ given by setting 
$$
	R\widehat K(\omega)=\{z: |p(\omega,x)|\le 
		\max_{x\in K(\omega)}|p(\omega,x)|, \forall \, \mbox{random polynomials}\, p\}.		
$$
\color{black}
Because of the  central role of the polynomially convex hull in the Oka-Weil Theorem, the random polynomially convex hull should be investigated in relation to approximation by random polynomials. 
\color{black}

\begin{proposition}
    Let $K$ be a random compact set. Then, $\widehat K=R\widehat K$.
\end{proposition}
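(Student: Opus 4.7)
The plan is to prove both inclusions pointwise in $\omega$, relying on the fact that the definitions of $\widehat K$ and $R\widehat K$ differ only in whether the competing polynomials are allowed to depend measurably on $\omega$.

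For the inclusion $R\widehat K(\omega)\subset\widehat K(\omega)$, I would observe that every (deterministic) polynomial $q\in P(\C^n)$ can be regarded as a random polynomial $p$ by setting $p(\omega,z)=q(z)$; since the coefficients are constants, they are trivially measurable in $\omega$, so $p$ satisfies the definition of a random polynomial given earlier (finite sum $\sum_\alpha a_\alpha(\omega)z^\alpha$ with measurable $a_\alpha$). If $z\in R\widehat K(\omega)$, applying the defining inequality of $R\widehat K$ to these constant random polynomials yields $|q(z)|\le \max_{x\in K(\omega)}|q(x)|$ for every $q\in P(\C^n)$, which is exactly the condition for $z\in\widehat K(\omega)$.

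For the reverse inclusion $\widehat K(\omega)\subset R\widehat K(\omega)$, I would fix $\omega\in\Omega$ and let $p$ be any random polynomial. Then $p(\omega,\cdot)=\sum_\alpha a_\alpha(\omega)z^\alpha$, with the now-fixed scalars $a_\alpha(\omega)\in\C$, is an ordinary polynomial in $z$, i.e.\ an element of $P(\C^n)$. If $z\in\widehat K(\omega)$, then by the definition of the classical polynomially convex hull applied to this particular polynomial, $|p(\omega,z)|\le \max_{x\in K(\omega)}|p(\omega,x)|$. Since this holds for every random polynomial $p$, we conclude $z\in R\widehat K(\omega)$.

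There is no real obstacle here: the statement is essentially a tautology once one notices that the two classes of competing polynomials coincide when one fixes $\omega$. The only potentially delicate point worth writing out carefully is the verification that a constant-in-$\omega$ polynomial qualifies as a random polynomial under the paper's measurability convention, which I would do by pointing back to the definition of random power series from the several-variables section.
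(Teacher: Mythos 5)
Your proof is correct and follows essentially the same route as the paper: both arguments rest on the observation that a deterministic polynomial is a random polynomial with constant (hence measurable) coefficients, and that conversely a random polynomial at fixed $\omega$ is just an ordinary polynomial, so the two families of competing polynomials coincide pointwise in $\omega$. Your write-up is in fact somewhat more explicit than the paper's, which compresses the two inclusions into the remark that ``filtering will be done by all polynomials but only by polynomials.''
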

\begin{proof}
    Let $p$ be a polynomial. It is also a random polynomial, by defining $p(\omega,z)=p(z)$ for all $\omega$. For fixed $z$, $p(\cdot,z)$ is a constant function. Thus, if $E$ is a measurable set, we have that $$p(\cdot,z)^{-1}(E)=\begin{cases}
		\Omega &\quad\text{if}\quad p(z)\in E\\
		\emptyset &\quad\text{if}\quad p(z)\notin E\\
		\end{cases}$$
and as both cases are measurable sets, this function is measurable.

    Thus, when filtering out points to form the random polynomially convex hull, all possible polynomials will be applied. But since all random polynomials take as value, for fixed $\omega$, a polynomial, filtering will be done by all polynomials but only by polynomials. Thus, we have directly that $\widehat K=R\widehat K$.
\end{proof}

Let us denote by $ \mathcal R\mbox{-hull}\,K$ the rationally convex hull of a compact set $K.$ It is defined as$$
\mathcal R\mbox{-hull}\,K = \{z \in \C^n : \abs{r(z)} \leq \max_{x \in K}  \abs{r(x)} \text{ } \forall r \in \mathcal R_K(\C^n)\}$$
where $R_K(\C^n)$ is the set of rational functions from $\C^n$ to $\C$ which are analytic over $K$.

\begin{lemma}
Let $K$ be a compact set of $\C^n$ and let $\mathcal R_K^\Q(\C^n)$ denote the rational functions in $\C^n$ without singularities on $K$ and whose coefficients have rational real and imaginary part. Then,
\begin{equation} \label{rationalHullQ}
    \mathcal R\mbox{-hull}\,K = \{z \in \C^n : \abs{r(z)} \leq \max_{x \in K}  \abs{r(x)} \text{ } \forall r \in \mathcal R_K^\Q(\C^n)\}.
\end{equation}
\end{lemma}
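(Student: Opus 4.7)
The plan is to mimic the structure of the proof of Lemma \ref{lemmaPolyHull}, replacing polynomial density by rational-function density. The inclusion $\mathcal R\mbox{-hull}\,K \subset \{z \in \C^n : |r(z)| \leq \max_{x \in K} |r(x)|\ \forall r \in \mathcal R_K^\Q(\C^n)\}$ is immediate since $\mathcal R_K^\Q(\C^n) \subset \mathcal R_K(\C^n)$. For the reverse inclusion, the key step will be a density statement: given a compact set $L \subset \C^n$ and $r \in \mathcal R_L(\C^n)$, one can approximate $r$ uniformly on $L$ by elements of $\mathcal R_L^\Q(\C^n)$.

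To establish this density statement, I would write $r = P/Q$ with $Q$ nowhere vanishing on $L$. Compactness of $L$ yields a uniform lower bound $|Q| \geq \delta > 0$ and a uniform upper bound $|P| \leq M$ on $L$. Invoking the polynomial density argument already carried out inside the proof of Lemma \ref{lemmaPolyHull}, I would choose $\tilde P, \tilde Q \in \mathcal P^\Q(\C^n)$ with $|\tilde P - P|$ and $|\tilde Q - Q|$ both bounded on $L$ by a threshold $\eta > 0$ chosen smaller than $\delta/2$. This guarantees $|\tilde Q| \geq \delta/2$ on $L$, so in particular $\tilde Q$ has no zeros on $L$ and $\tilde r := \tilde P/\tilde Q \in \mathcal R_L^\Q(\C^n)$. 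The elementary identity
\begin{equation*}
\frac{\tilde P}{\tilde Q} - \frac{P}{Q} = \frac{(\tilde P - P)Q - P(\tilde Q - Q)}{\tilde Q\, Q}
\end{equation*}
then bounds the sup-norm error on $L$ by a quantity of order $\eta/\delta^2$, which can be made less than any prescribed $\varepsilon$ by taking $\eta$ small enough.

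With this density lemma in hand, suppose $z_0 \notin \mathcal R\mbox{-hull}\,K$. There exists $r \in \mathcal R_K(\C^n)$ with $|r(z_0)| > \max_K |r|$; in particular $z_0$ is not a pole of $r$, so $r$ is analytic on $L := K \cup \{z_0\}$. Setting $\eta := |r(z_0)| - \max_K |r| > 0$ and applying the density lemma on $L$ with error less than $\eta/4$ produces a $\tilde r \in \mathcal R_L^\Q(\C^n) \subset \mathcal R_K^\Q(\C^n)$. The reverse triangle inequality, exactly as in Lemma \ref{lemmaPolyHull}, then yields $|\tilde r(z_0)| > \max_K |\tilde r|$, so $z_0$ is not in the right-hand side of (\ref{rationalHullQ}).

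The main obstacle is ensuring that the rational-coefficient denominator $\tilde Q$ remains zero-free on $L$: a careless perturbation could introduce new singularities and push $\tilde r$ out of $\mathcal R_L^\Q(\C^n)$. This is precisely why the approximation tolerance $\eta$ must be tied to the uniform lower bound $\delta$ of $|Q|$ on $L$. Once this coupling is in place, the rest of the argument is a routine transcription of the polynomial case.
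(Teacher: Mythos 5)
Your overall strategy is the same as the paper's: one inclusion is immediate, and the other reduces to the density of $\mathcal R_L^\Q(\C^n)$ in $\mathcal R_L(\C^n)$ uniformly on $L=K\cup\{z_0\}$, followed by the reverse-triangle-inequality argument already carried out at the end of Lemma \ref{lemmaPolyHull}. Your density step (write $r=P/Q$, perturb numerator and denominator within $\mathcal P^\Q(\C^n)$, keep $|\tilde Q|\ge\delta/2$ on $L$ so that no new singularities are created, and control the quotient by the usual identity) is actually more detailed than the paper, which merely asserts this density without proof; tying the tolerance $\eta$ to the lower bound $\delta$ of $|Q|$ on $L$ is exactly the right precaution.

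There is one point you pass over too quickly. From $z_0\notin\mathcal R\mbox{-hull}\,K$ you produce $r\in\mathcal R_K(\C^n)$ with $|r(z_0)|>\max_{x\in K}|r(x)|$ and then assert ``in particular $z_0$ is not a pole of $r$.'' But the failure of the defining condition at $z_0$ may equally well be witnessed only by functions whose singular set contains $z_0$ (e.g.\ $r=1/q$ with $q(z_0)=0$ and $q\neq 0$ on $K$); such an $r$ is not analytic on $L=K\cup\{z_0\}$, so your density lemma cannot be applied to it, and the existence of a witness that \emph{is} analytic at $z_0$ is precisely what needs to be argued. The paper devotes its note after the lemma to this case: one perturbs, replacing $r$ by $z\mapsto r(z+v)$ for $v$ sufficiently small (or $1/q$ by $1/(q-c)$ for small $c\neq 0$), obtaining a rational function analytic at $z_0$, still pole-free on $K$, and still violating the inequality. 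With that one additional perturbation inserted, your argument closes and coincides with the paper's.
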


\begin{proof}
Let $\mathcal R_\Q\mbox{-hull}\,K$ denote the right side of equation \ref{rationalHullQ}. It is clear that $\mathcal R\mbox{-hull}\,K \subset\mathcal R_\Q\mbox{-hull}\,K$ by the definition. Since every rational function with no singularities on $K$ can be approximated on $K$ by functions in $\mathcal R_K^\Q(\C^n),$ the inclusion  $\mathcal R_\Q\mbox{-hull}\,K \subset \mathcal R$-hull$K$ then follows by the same arguments used at the end of the proof of Lemma \ref{lemmaPolyHull}.

\end{proof}

Note : The result indeed follows from the proof of lemma \ref{lemmaPolyHull} since if $z_0 \in  (\mathcal R\mbox{-hull}\,K)^c$, we can find a rational function $r$ without pole in $K \cap  \{z_0\}$ such that $\abs{r(z_0)} > \max_{x \in K} \abs{r(x)}$. That is because if $r = 1/q$ had a pole at $z_0$, we can nudge it a bit to get the desired rational function by taking $r(z) =1/q(z+v)$ for $v$ sufficiently small.

The following lemma is a known fact that we simply recall.

\begin{lemma} \label{pole set}
	Let $r : \C^n\rightarrow \C$ be a  rational function. Then, the set $S(r)$ of singularities of $r$ is a closed set.
\end{lemma}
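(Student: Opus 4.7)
The plan is to reduce the statement to the elementary fact that the zero set of a polynomial is closed, after first choosing a suitable representation of $r$ as a quotient of polynomials.

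First, write $r = p/q$ with $p, q \in \C[z_1,\ldots,z_n]$ and $q \not\equiv 0$. Since $\C[z_1,\ldots,z_n]$ is a unique factorization domain, we may cancel any common polynomial factor and assume that $p$ and $q$ are coprime. With this normalization I would identify $S(r)$ with the zero locus $V(q) = \{z \in \C^n : q(z) = 0\}$: indeed, at any point $z_0$ with $q(z_0) \neq 0$, continuity of $q$ provides a neighbourhood on which $q$ does not vanish, and on this neighbourhood the quotient $p/q$ is a well-defined holomorphic function; conversely, since $p$ and $q$ have no common factors, the fraction cannot be cancelled further and $r$ fails to admit a holomorphic extension across any point of $V(q)$.

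Granting that identification, the conclusion is immediate: $q : \C^n \to \C$ is continuous (as every polynomial is), so
\begin{equation*}
    S(r) \;=\; V(q) \;=\; q^{-1}(\{0\})
\end{equation*}
is the preimage of the closed singleton $\{0\} \subset \C$ under a continuous map, hence closed in $\C^n$. Equivalently, one may phrase this contrapositively: the previous paragraph shows $\C^n \setminus S(r)$ is open, whence $S(r)$ is closed.

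The only even mildly delicate point in the argument is the passage to a coprime representation $p/q$; for the closedness statement \emph{per se} it is not actually needed, since for any representation $r = p/q$ one already has $S(r) \subseteq V(q)$, and an open neighbourhood of any non-singular point is produced by the same continuity argument. Thus there is no real obstacle, and the entire proof amounts to invoking continuity of polynomials.
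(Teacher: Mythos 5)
The paper states this lemma without proof, merely recalling it as a known fact, so there is no argument of record to compare yours against; your proposal is correct and supplies the missing argument. One remark on your middle paragraph: the claim that, once $p$ and $q$ are coprime in $\C[z_1,\ldots,z_n]$, the function $r$ fails to extend holomorphically across \emph{every} point of $V(q)$ is true but, for $n\ge 2$, not quite as immediate as ``the fraction cannot be cancelled further'' suggests. An irreducible polynomial can factor nontrivially in the local ring of germs at a point, so to rule out a local cancellation one needs an extra observation, e.g.\ that a common non-unit local factor of $p$ and $q$ at $z_0$ would give a codimension-one analytic germ inside $V(p)\cap V(q)$, which has codimension at least two when $p$ and $q$ are coprime polynomials. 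Fortunately, as you note yourself in the last paragraph, none of this is needed for the closedness assertion: for any representation $r=p/q$ the set of non-singular points is open essentially by definition (a point admitting a holomorphic extension to a neighbourhood is surrounded by such points), and with the reduced representation $S(r)=q^{-1}(\{0\})$ is closed as the preimage of a closed set under a continuous map. Your closing observation is really the entire proof, and it is the cleanest way to record the fact the paper takes for granted.
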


The proof of the next lemma is very simple to do, and is thus left to the reader to verify.

\begin{lemma} \label{restriction measurable}
	Let $X$ be a metric space. Suppose  $X=A\cup B,$ where $A$ and $B$ are disjoint measurable subsets. Let $Y$ be a metric space and $f : X\rightarrow Y$ a function such that its restriction $f_A$ to the metric space $A$ is measurable, and its restriction $f_B$ to the metric space $B$ is measurable. Then $f$ is a measurable function.
\end{lemma}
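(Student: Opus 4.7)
The plan is to reduce measurability of $f$ on $X$ to measurability of its restrictions via the standard description of the Borel $\sigma$-algebra on a subspace. The key ingredient is the fact that when $A$ is a measurable (Borel) subset of the metric space $X$, the Borel $\sigma$-algebra on $A$ viewed as a metric subspace coincides with the trace $\{A\cap M : M \in \mathcal{B}(X)\}$. Since $A$ itself lies in $\mathcal{B}(X)$ and $\mathcal{B}(X)$ is closed under finite intersections, every set which is Borel in the subspace $A$ is also Borel in $X$. The same remark applies to $B$.

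To carry this out, I would fix an arbitrary Borel set $E\subset Y$ and write
\begin{equation*}
    f^{-1}(E) = f_A^{-1}(E) \cup f_B^{-1}(E),
\end{equation*}
using that $A$ and $B$ are disjoint and cover $X$. By hypothesis $f_A$ is measurable, so $f_A^{-1}(E)$ is Borel in $A$; by the subspace description above, $f_A^{-1}(E) = A\cap M$ for some $M\in\mathcal{B}(X)$, and since $A\in\mathcal{B}(X)$ this intersection is Borel in $X$. Analogously $f_B^{-1}(E)$ is Borel in $X$. A finite union of Borel sets is Borel, so $f^{-1}(E)\in\mathcal{B}(X)$, which is exactly what is needed.

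I expect no real obstacle here: the only subtle point is the identification of the subspace Borel $\sigma$-algebra with the trace $\sigma$-algebra, which is standard for metric spaces and is implicitly used throughout the paper whenever one speaks of measurability of a function defined on a subset. Once that identification is invoked, the proof is a one-line manipulation, which is consistent with the authors' assertion that the verification is routine and left to the reader.
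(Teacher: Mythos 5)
Your proof is correct, and since the paper explicitly leaves this verification to the reader, it supplies exactly the routine argument the authors had in mind: decompose $f^{-1}(E)$ as $f_A^{-1}(E)\cup f_B^{-1}(E)$ and use that the Borel $\sigma$-algebra of a measurable subspace is the trace $\sigma$-algebra, so each piece is Borel in $X$. No gaps.
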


\begin{theorem}\label{R-hull random}
Let $K$ be a random compact set in $\C^n$. Then, its rationally convex hull $\mathcal R\mbox{-hull}\,K$, defined pointwise as $[\mathcal R\mbox{-hull}\,K](\omega) =  \mathcal R\mbox{-hull}\,(K(\omega)),$ is a random compact set.
\end{theorem}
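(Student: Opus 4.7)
The plan is to adapt the proof of Theorem \ref{P-hull random} to the rational setting. The extra complication is that admissibility of a rational function $r$ (i.e.\ that $r$ have no poles on $K(\omega)$) now depends on $\omega$. First I would invoke the preceding lemma to write
$$
\mathcal R\mbox{-hull}\,K(\omega) \;=\; \bigcap_{r \in \mathcal R_{K(\omega)}^\Q(\C^n)} N_r^{K(\omega)},
\qquad
N_r^K := \bigl\{z \in \C^n : \abs{r(z)} \le \max_{x \in K}\abs{r(x)}\bigr\}.
$$
I would then truncate by closed balls, as in the polynomial case, and rewrite the hull as
$$
\mathcal R\mbox{-hull}\,K(\omega) \;=\; \bigcup_{i=1}^\infty \bigcap_{r \in \mathcal R^\Q(\C^n)} T^i_r(\omega),
\qquad
T^i_r(\omega) := \begin{cases} N_r^{K(\omega)} \cap \overline B_i(0), & K(\omega) \cap S(r) = \emptyset,\\ \overline B_i(0), & \text{otherwise.}\end{cases}
$$
Setting $T^i_r$ equal to the full ball on the ``inadmissible'' $\omega$ ensures $r$ imposes no constraint there, and countability of $\mathcal R^\Q(\C^n)$ reduces the theorem to showing that each $T^i_r$ is a pseudo-random compact set.

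Next I would fix $r \in \mathcal R^\Q(\C^n)$ and $i \in \N$ and set $A_r := \{\omega : K(\omega) \cap S(r) = \emptyset\}$. By Lemma \ref{pole set}, $S(r)$ is closed, so $\C^n\setminus S(r)$ is open; by Remark \ref{characterizations}, $\{K \in \mathcal K^\prime(\C^n) : K \subset \C^n\setminus S(r)\}$ is Borel; and since $\omega \mapsto K(\omega)$ is measurable, $A_r$ is measurable. I would then apply the obvious analogue of Lemma \ref{restriction measurable} with $\Omega = A_r \cup A_r^c$. On $A_r^c$, $T^i_r \equiv \overline B_i(0)$ is constant, hence trivially pseudo-random. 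On $A_r$, I would mimic the factorisation used at the end of the proof of Theorem \ref{P-hull random}:
$$
\omega \;\longmapsto\; K(\omega) \;\longmapsto\; \abs{r}(K(\omega)) \;\longmapsto\; M(\omega) := \max \abs{r}(K(\omega)) \;\longmapsto\; W_r^{M(\omega)} \cap \overline B_i(0),
$$
with $W_r^M := \{z \in \C^n : \abs{r(z)} \le M\}$. The first arrow is measurable by hypothesis; the second is continuous by Lemma \ref{continuous extension}, applied to $\abs{r}$ on the open set $\C^n\setminus S(r)$; the third is continuous as in the polynomial proof; and the fourth is pseudo-random by the same monotonicity argument already given for $W_p^M$. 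Crucially, since $\abs{r(z)} \to \infty$ as $z \to S(r)$, the set $W_r^M \cap \overline B_i(0)$ stays uniformly away from $S(r)$ and is therefore a closed bounded subset of $\C^n$, i.e.\ compact.

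With this in hand, Lemma \ref{pseudo union} gives that the countable intersection $\bigcap_{r \in \mathcal R^\Q(\C^n)} T^i_r$ is pseudo-random; this intersection is a closed subset of the compact ball $\overline B_i(0)$, hence compact for every $\omega$, so a second application of Lemma \ref{pseudo union} shows that $\bigcup_{i \ge 1} \bigcap_r T^i_r$ is pseudo-random (the union equals the rationally convex hull, which is compact). Since $\mathcal R\mbox{-hull}\,K(\omega) \supset K(\omega) \neq \emptyset$, this map never takes the value $\emptyset$ and is therefore a genuine random compact set. I expect the main obstacle to be the bookkeeping around $A_r$: one must cleanly verify that, restricted to $A_r$, the continuous-extension machinery of Lemma \ref{continuous extension} applied on the open set $\C^n \setminus S(r)$ produces a compact-valued map into $\mathcal K^\prime(\C^n)$, and that the truncation by $\overline B_i(0)$ really kills the blow-up of $r$ near its poles so that the resulting sets are honest compact subsets of the ambient $\C^n$.
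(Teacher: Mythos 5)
Your overall strategy coincides with the paper's: you handle the $\omega$-dependence of admissibility by a case split on whether $K(\omega)$ meets the singular set $S(r)$, you prove measurability of that event exactly as the paper does (using that $S(r)$ is closed, Lemma \ref{pole set}, and the generators of $\mathcal B(\mathcal K^\prime(\C^n))$ from Remark \ref{characterizations}), you invoke the restriction lemma (Lemma \ref{restriction measurable}), and on the admissible part you reuse the factorization through $K\mapsto \abs{r}(K)\mapsto \max\abs{r}(K)\mapsto W_r^M\cap\overline B_i(0)$ from the proof of Theorem \ref{P-hull random}. The one structural difference is the default value on the inadmissible set: you take the full ball $\overline B_i(0)$, whereas the paper defines $\mu_r(K)=\widehat K$ there and $\widehat K\cap N_r^K$ on the admissible set, exploiting $\mathcal R\mbox{-hull}\,K\subseteq\widehat K$ together with Corollary \ref{P-hull measurable}. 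Both choices make the countable intersection over $\mathcal R^\Q(\C^n)$ reproduce the hull via the rational analogue of Lemma \ref{lemmaPolyHull}, so this difference is cosmetic.

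There is, however, one supporting claim in your argument that is false for $n\geq 2$: that $\abs{r(z)}\to\infty$ as $z\to S(r)$, which you use to conclude that $W_r^M\cap\overline B_i(0)$ is compact. In several variables a rational function $r=p/q$ has an indeterminacy locus $\{p=q=0\}$ on which $\abs{r}$ does not blow up. For instance, with $r(z_1,z_2)=z_1/z_2$ and $M=1$, the points $(0,1/k)$ all lie in $W_r^1\cap\overline B_1(0)$, but their limit $(0,0)\in S(r)$ does not; the set is therefore not closed, hence not compact, the map $M\mapsto W_r^M\cap\overline B_i(0)$ does not take values in $\mathcal K(\C^n)$, and Lemma \ref{pseudo union} cannot be applied to it. (The paper's own proof is terse at exactly this point --- it asserts that the polynomial argument carries over ``over $B_r$'' --- but it does not rest on this false limit statement.) The gap is repairable: replace $W_r^M$ by the closed set $\{z:\abs{p(z)}\leq M\abs{q(z)}\}$, which agrees with $W_r^M$ off $S(r)$ and adds only points of $\{p=q=0\}$; one then checks that these extra points are removed elsewhere in the intersection (e.g.\ by the rational function $1/q$, which is admissible for the same $K$ and excludes every zero of $q$), so the identity with $\mathcal R\mbox{-hull}\,K(\omega)$ survives. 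As written, though, the compactness step fails in $\C^n$ for $n\geq 2$ and needs this correction.
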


\begin{proof}
Let $r$ be a rational function and $K$ be a non-empty compact subset of $\C^n$. 
Set $A_r=\{K\in \mathcal K^\prime(\C^n) : K\cap S(r)\neq \emptyset\}$ and $B_r=\mathcal K^\prime(\C^n)\setminus A_r.$  Then, $\mathcal K^\prime(\C^n)$ is the disjoint union of $A_r$ and $B_r.$ 
We define a function $\mu_r : \mathcal K'(\C^n) \rightarrow \mathcal K'(\C^n)$ by
\begin{equation*}
    \mu_r(K) =
    \begin{cases}
    \widehat K & \text{if } K\in A_r \\
    \widehat K \cap N_r^K & \text{if } K\in B_r,
    \end{cases}
\end{equation*}
where $N_r^K = \{z \in \C^n : \abs{r(z)} \leq \max_{x \in K} \abs{r(x)}\}$. 
We note that, for $K\in B_r,$ the set $N_r^K$ is never empty, since it contains $K.$ 
Since all polynomials are also rational functions, $\mathcal R\mbox{-hull}\,K\subseteq \widehat K.$
Therefore, we can write
\begin{equation*}
     \mathcal R\mbox{-hull}\,K =  (\mathcal R\mbox{-hull}\,K)\cap\widehat K = 
	\bigcap_{r \in \mathcal R^\Q(\C^n)} \mu_r(K),
\end{equation*}
where $\mathcal R^\Q(\C^n)$ is the set of all rational functions in $\C^n$ whose coefficients have rational real and imaginary parts. The intersection is countable since $\mathcal R^\Q(\C^n)$ can be viewed as a subset of $\mathcal P^\Q(\C^n) \times \mathcal P^\Q(\C^n)$.  

It is thus sufficient to show that $K\mapsto \mu_r(K)$ is measurable, for then by Lemma \ref{cup}, 
the mapping $\omega \mapsto  \mathcal R\mbox{-hull}\,K(\omega)$ will be measurable.

First of all, let us show that the subset $A_r\subset \mathcal K^\prime (\C^n)$  is measurable. 
We may assume that $A_r\not=\emptyset$ and hence $S(r)\not=\emptyset,$  
We construct a sequence of open subsets of $\C^n$ defined as 
$$O_n=\left\{z : d(z,S(r))<\frac{1}{n}\right\}.$$
Then, $O_{n+1}\subseteq O_n$ and  $\bigcap_{n=1}^\infty O_n = S(r)$. We have that$$A_r=\bigcap_{n=1}^\infty \{K : K\cap O_n\neq \emptyset\}.$$ By our 2nd characterisation of measurable sets (see Remark \ref{characterizations}), this is a countable intersection of measurable sets, and thus measurable. We have shown that $A_r$ and hence also $B_r=\mathcal K^\prime(\C_n)\setminus A_r$ are measurable.

The definition of the  function $\mu_r(K)$ depends on whether $K$ is in $A_r$ or $B_r.$ 
Since $\mathcal K^\prime(\C^n) = A_r\cup B_r$ is the union of disjoint measurable sets, in order to assure that $\mu_r$ is measurable, it suffices, by Lemma \ref{restriction measurable}, to show that the restrictions of $\mu_r$ to  $A_r$ and  to $B_r$ are measurable . 

By Corollary \ref{P-hull measurable},
 we have that the mapping $K\mapsto \widehat K$ is measurable, and thus its restriction to $A_r,$ which is the same as the restriction of $\mu_r$ to $A_r,$ is also measurable. 

We claim that the mapping $K\mapsto \widehat K\cap N_r^K$, is measurable over $B_r$. We first notice that over the elements of $B_r$, our rational function $r$ is in fact continuous.
One might by essentially the same proof as for the polynomially convex hull , as it did not use any special properties other than that polynomials are continuous functions, show that (over $B_r$), the mappings $K\mapsto N_r^K \cap \overline B_i(0)$ are pseudo-random. It then follows from Lemma \ref{pseudo union}, as the mapping $K\mapsto \widehat K$ is measurable,  that each mapping $K\mapsto N_r^K \cap \overline B_i(0)\cap \widehat K$ is pseudo-random. We then have that the mapping $$
	K\mapsto N_r^K \cap  \widehat K=\bigcup_{i=1}^\infty N_r^K \cap \overline B_i(0)\cap \widehat K
 $$
is pseudo-random by Lemma \ref{pseudo union}
and in fact measurable, as $N_r^K \cap  \widehat K$ 
is always non-empty.

By Lemma \ref{restriction measurable}, the mapping $\mu_r$ is measurable. 
\end{proof}

\color{black}
	One might also be interested in using random compact sets to construct random functions.
\color{black}	
	An important function for complex approximation (see \cite{Ni}) is the pluricomplex Green function $V_K$ for a non-empty compact set $K\subset\C^n$ is defined as $V_K(z)=\log \Phi_K(z),$ for $z\in\C^n,$ where
\color{black}
$\Phi$ is the  Siciak extremal function, defined as
	\begin{equation*}
		\Phi_K(z)=\sup_{p\in P(\C^n) : \|p\|_K\leq 1, \deg(p)\geq 1} |p(z)|^{1/\deg p}.
	\end{equation*}

We have the following result. 
\color{black}	
	\begin{theorem}\label{siciak}
		The Siciak extremal function and the pluricomplex Green function of a random compact set $K$ are random functions. That is, the functions $(\omega,z)\mapsto\Phi_{K(\omega)}(z)$ and  $V_{K(\omega)}(z)=\log \Phi_{K(\omega)}(z),$ are random function into $\R\cup\{\infty\}$.
	\end{theorem}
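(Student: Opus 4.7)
The plan is to fix $z \in \C^n$ and show that $\omega \mapsto \Phi_{K(\omega)}(z)$ is measurable as a function into $[0,\infty]$; the assertion for $V_K$ then follows at once, since $\log : [0,\infty] \to [-\infty,\infty]$ is continuous. In the spirit of the proof of Theorem \ref{P-hull random} (using the density argument from Lemma \ref{lemmaPolyHull}), the strategy is to reduce the supremum defining $\Phi_K(z)$ to a countable one over $\mathcal P^\Q(\C^n)$, and then to exploit the measurability of $\omega \mapsto \|p\|_{K(\omega)}$ for each fixed polynomial $p$.

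For the measurability of $\omega \mapsto \|p\|_{K(\omega)}$, I would write it as the composition of the measurable map $\omega \mapsto K(\omega)$ with the continuous maps $p^{\mathcal K^\prime} : \mathcal K^\prime(\C^n) \to \mathcal K^\prime(\C)$ and $\abs{\,\cdot\,}^{\mathcal K^\prime} : \mathcal K^\prime(\C) \to \mathcal K^\prime([0,\infty))$ given by Lemma \ref{continuous extension}, followed by $K \mapsto \max K$ on $\mathcal K^\prime([0,\infty))$, which is $1$-Lipschitz in the Hausdorff metric. Consequently $\{\omega : \|p\|_{K(\omega)} \leq 1\}$ is measurable for every polynomial $p$.

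For the density reduction, I would argue as follows. Given any polynomial $p$ with $\deg p \geq 1$ and $\|p\|_K \leq 1$, first replace $p$ by $(1-\eta)p$ for a small $\eta > 0$ and then approximate its coefficients by rationals (exactly as in Lemma \ref{lemmaPolyHull}) to obtain $q \in \mathcal P^\Q(\C^n)$ of the same degree with $\|q\|_K \leq 1$ and $\abs{q(z)}^{1/\deg q}$ arbitrarily close to $\abs{p(z)}^{1/\deg p}$. The degenerate case $\|p\|_K = 0$ with $p(z) \neq 0$ (in which $\Phi_K(z) = +\infty$ by scaling $p$ freely) must be handled separately: a rational approximation $q$ of $p$ has $\|q\|_K$ small and $\abs{q(z)}$ bounded away from zero, so scaling $q$ by an arbitrarily large rational constant $c$ with $c\,\|q\|_K \leq 1$ shows that the rational supremum also equals $+\infty$. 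This gives
$$
\Phi_{K(\omega)}(z) = \sup\bigl\{\abs{p(z)}^{1/\deg p} : p \in \mathcal P^\Q(\C^n),\ \deg p \geq 1,\ \|p\|_{K(\omega)} \leq 1\bigr\}.
$$

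Combining the two ingredients, for every $t \geq 0$,
$$
\{\omega : \Phi_{K(\omega)}(z) > t\} = \bigcup_{\substack{p \in \mathcal P^\Q(\C^n) \\ \deg p \geq 1,\ \abs{p(z)}^{1/\deg p} > t}} \bigl\{\omega : \|p\|_{K(\omega)} \leq 1\bigr\}
$$
is a countable union of measurable sets, so $\omega \mapsto \Phi_{K(\omega)}(z)$ is measurable, and hence so is $V_{K(\omega)}(z) = \log \Phi_{K(\omega)}(z)$. I expect the main obstacle to be the bookkeeping in the density reduction---particularly the degenerate case $\|p\|_K = 0$---but this is essentially the same kind of argument already used in the proof of Lemma \ref{lemmaPolyHull}.
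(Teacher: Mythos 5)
Your proof is correct and follows essentially the same route as the paper's: reduce the supremum to the countable family $\mathcal P^\Q(\C^n)$ and use the measurability of $\omega\mapsto\|p\|_{K(\omega)}$ (as a composition of the measurable map $K$ with continuous maps via Lemma \ref{continuous extension}) to express level sets of the supremum as countable unions or intersections of measurable sets. If anything, you are slightly more careful than the paper on the density step---the $(1-\eta)p$ rescaling needed to preserve the constraint $\|q\|_{K}\leq 1$ and the degenerate case $\|p\|_{K}=0$ are points the paper's argument glosses over.
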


	\begin{proof}
It is sufficient to show that the Siciak Green function is a random function. 
\color{black}
		Let $p$ be a polynomial. We construct the function
		\begin{align*}
			g_p(\omega,z)
			\begin{cases}
       			|p(z)|^{1/\deg p} &\quad \|p\|_{K(\omega)}\leq 1\\
       			0 &\quad \|p\|_{K(\omega)}>1,\\
     		\end{cases}
		\end{align*}
		which evaluates our polynomial if it respects the condition $\|p\|_{K(\omega)}\leq 1$, and returns $0$ if not.
		
		We first prove this function is a random function. We fix $z\in \C^n$. Let $M$ be a measurable set of $\R$. We now have
		\begin{align*}
			\{\omega : g_p(\omega,z)\in M\} & =(\{\omega : \|p\|_{K(\omega)}>1\}\cap\color{black}\{\omega : 0\in M\})\\
&\quad\cup (\{\omega : \|p\|_{K(\omega)}\leq1\|\cap\color{black}\{\omega : |p(z)|^{1/\deg p}\in M\}).
		\end{align*}
		We remark that the sets $\{\omega : 0\in M\}$ and $\{\omega : |p(z)|^{1/\deg p}\in M\}$ are always $\emptyset$ or $\Omega$, as the conditions do not depend on $\omega$. The set $\{\omega:\|p\|_{K(\omega)}\le 1\}$ is measurable, because the function $|p(K(\omega))|$ is measurable, since it is the composition of the measurable function $K:\Omega\rightarrow\mathcal K^\prime(\C^n)$ with the continuous function $\mathcal K^{|p|}:\mathcal K^\prime(\C^n)\rightarrow \mathcal K^\prime(\R)$. Thus, the set $\{\omega : g_p(\omega,z)\in M\}$ is measurable. This proves $g_p(\cdot,z)$ is measurable, which means that $g_p$ is a random function.
		
		Let us now return to the Siciak extremal function. We shall prove that it may be written as
		\begin{equation*}
			\Phi_E(z)=\sup_{p\in P^\Q(\C^n) : \|p\|_E\leq 1, \deg(p)\geq 1} |p(z)|^{1/\deg p},
		\end{equation*}
		where $P^\Q(\C^n)$ is the set of all polynomials with rational coefficients.
		
		In order to do this, it is sufficient to show that for any $\varepsilon>0$, $p\in P(\C^n)$, $z\in \C^n$, there is a polynomial $q\in P^\Q(\C^n)$ such that $||p(z)|^{1/\deg p}-|q(z)|^{1/\deg q}|<\varepsilon$. Suppose $p$ has degree $k$. We will choose $q$ of degree $k$ also. Using the fact that if $x>y\geq 0$, then $x^{1/k}-y^{1/k}\color{black}\le\color{black}(x-y)^{1/k}$, and the reverse triangle identity $||x|-|y||\leq|x-y|$ (this one also applies to the complex norm), we have that
		\begin{align*}
			||p(z)|^{1/\deg p}-|q(z)|^{1/\deg q}| &= ||p(z)|^{1/k}-|q(z)|^{1/k}|\\
			&\leq ||p(z)|-|q(z)||^{1/k}\\
			&\leq |p(z)-q(z)|^{1/k}.
		\end{align*}
		This reduces the problem to finding $q$ such that $|p(z)-q(z)|<\color{black}\varepsilon^k$. But we know this is possible, as we can approximate $p$ by choosing $q$ with very close rationnal coefficients.
		
		But now, by the definition of $g_p$, we have that
		\begin{align*}
			\Phi_{K(\omega)}(z)=\sup_{p\in P^\Q(\C^n) : \deg(p)\geq 1} g_p(\omega,z).
		\end{align*}
		We were able to remove the condition over our polynomials that $\|p\|_{K(\omega)}\leq 1$ since $g_p(\omega,z)$ always returns $0$ if the condition is not fulfilled.
		
		We now prove that this function is random; that is,  that $\Phi_{K(\cdot)}(z)$ is measurable. Consider the subset $(-\infty,a]$  of $\R$. We have that
		\begin{align*}
			\{\omega : \Phi_{K(\cdot)}(z)\in (-\infty,a]\}
			&=\{\omega : \Phi_{K(\cdot)}(z)\leq a\}\\
			&=\{\omega : \sup_{p\in P^\Q(\C^n) : \deg(p)\geq 1} g_p(\omega,z)\leq a\}\\
			&\color{black}=\{\omega :g_p(\omega,z)\leq a, \forall p\in P^\Q(\C^n), \deg(p)\geq 1\} \\
			&=\bigcap_{p\in P^\Q(\C^n) : \deg(p)\geq 1}\{\omega : g_p(\omega,z)\leq a\}.
		\end{align*}
		As $g_p$ is a random function, we know that the sets $\{\omega : g_p(\omega,z)\leq a\}$ are measurable. As $\{p\in P^\Q(\C^n) : \deg(p)\geq 1\}$ is a countable set, this is a countable intersection of measurable sets, and is this a measurable set. Thus, $\Phi_{K(\cdot)}(z)$ is a random function.
		
	\end{proof}

%%%%%%%%%%%%%%%%%%%%%%%%%%%%%%%%%%%%%%%%%%%
%%%%%%%%%%%%%%%%%%%%%%%%%%%%%%%%%%%%%%%%%%%%%

\section{Generalized random functions} \label{GeneralizedRandomFunctions}

Let $K$ be a random compact set in $\C^n$. Let the graph of $K$ be defined as 
\color{black}
 in \cite{AB1984}:
\color{black}
\begin{equation*}
\Gr K = \{(\omega, z) \in \Omega\times \C^n : z \in K(\omega)\}.
\end{equation*}
If we consider a compact set $K\subset\C^n$ as a random compact set $K(\omega)=K,$ for all $\omega\in K,$ then $\Gr K=\Omega\times K.$

By abuse of notation, we set 
\begin{equation*}
K^{-1}(z) = \{\omega : z \in K(\omega)\} = \bigcap_{n=1}^\infty \{\omega : K(\omega) \cap B_{\frac{1}{n}}(z) \neq \emptyset\}
\end{equation*}
and note that this does not correspond to the pre-image of the compact set $\{z\}$. 
By the second equality, we see that this set is measurable.
We define a generalized random function as a function $f : \Gr K \rightarrow \C$ 
(hence $f(\omega,\cdot)$ is defined over $K(\omega)$), 
such that, for fixed $z\in \C^n$ with $K^{-1}(z) \neq \emptyset$, 
the function $f(\cdot,z)$ from $K^{-1}(z)$ to $\C$ is measurable. 
\color{black}
In case a compact set $K$ is considered as a random compact set, then a generalized random function on $\Gr K=\Omega\times K$ turns out to be just a random function on $K.$ 
\color{black}

We can now formulate a weaker but random version of Roth's fusion lemma \cite{Gai}. We were not able to prove it without our conjectured random version of Runge's theorem. Therefore, we formulate it as a conjecture,  
\color{black}
but it can be shown that it would follow directly from Conjecture \ref{conjRunge}, if the latter were true. 
\color{black}

\begin{conjecture}\label{fusion}
Let $K_1$, $K_2$ and $k$ be random compact sets of $\C$ such that $K_1$ and $K_2$ are uniformly disjoint 
($K_1(\omega)\cap K_2(\omega)=\emptyset$ for all $\omega$). Furthermore, suppose 
\begin{equation*}
\abs{K_1(\Omega)} + \abs{K_2(\Omega)} + \abs{k(\Omega)} \leq \aleph_0.
\end{equation*}
If there exists generalized random rational functions $r_1,r_2 : \Omega \times \C \rightarrow \overline{\C}$ such that $r_i(\omega, \cdot)$ has no pole in $K_i(\omega)$ for $i = 1, 2$ and if there exists a positive error function $\varepsilon: \Omega \rightarrow \R^+$ such that
\begin{equation*}
\norm{r_1(\omega, \cdot) - r_2(\omega, \cdot)}_{k(\omega)} < \varepsilon(\omega),
\end{equation*}
then there exists a generalized random rational function $r : \Omega\times\C \rightarrow \overline{\C}$ and a measurable positive function $A: \Omega\rightarrow\R^+$ depending solely on $K_1$ and $K_2$ such that
\begin{equation*}
\norm{r(\omega, \cdot) - r_1(\omega, \cdot)}_{(K_1\cup k)(\omega)} < A(\omega)\varepsilon(\omega)
\end{equation*}
and
\begin{equation*}
\norm{r(\omega, \cdot) - r_2(\omega, \cdot)}_{(K_2\cup k)(\omega)} < A(\omega)\varepsilon(\omega)
\end{equation*}
for all $\omega \in \Omega$.
\end{conjecture}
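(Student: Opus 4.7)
The plan is to combine the classical Roth fusion argument with the conjectured random Runge theorem (Conjecture \ref{conjRunge}), after using the countable-image hypothesis to reduce to a piecewise-constant situation. Since $\abs{K_1(\Omega)} + \abs{K_2(\Omega)} + \abs{k(\Omega)} \le \aleph_0$, the triple $(K_1, K_2, k)$ takes only countably many values in $\mathcal K^\prime(\C)^3$. Singletons in $\mathcal K^\prime(\C)$ are closed, hence Borel, so their preimages under the measurable maps $K_1, K_2, k$ yield a countable measurable partition $\Omega = \bigcup_\alpha \Omega_\alpha$ (a disjoint union) on which $K_1, K_2, k$ are constant, equal respectively to disjoint compacts $A_1^\alpha, A_2^\alpha$ and to some compact $C^\alpha$. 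It thus suffices to construct $r$ on each $\Omega_\alpha$ and glue; the classical Roth fusion constant $A^\alpha = A(A_1^\alpha, A_2^\alpha)$ then combines into a piecewise constant, hence measurable, function $A : \Omega \to \R^+$.

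On a fixed piece $\Omega_\alpha$, the classical Roth fusion constructs $r(\omega, \cdot)$ by decomposing the rational function $r_2(\omega, \cdot) - r_1(\omega, \cdot)$ into $h_1(\omega, \cdot) + h_2(\omega, \cdot)$, via partial fractions or a suitable contour integral, where $h_i(\omega, \cdot)$ has no poles on $A_i^\alpha$; one then sets $r(\omega, \cdot) := r_1(\omega, \cdot) + h_2(\omega, \cdot) = r_2(\omega, \cdot) - h_1(\omega, \cdot)$, and the classical bounds give $\norm{r(\omega, \cdot) - r_i(\omega, \cdot)}_{A_i^\alpha \cup C^\alpha} \le A^\alpha \varepsilon(\omega)$. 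The main obstacle is measurability of $r(\cdot, z)$: the poles of $r_i(\omega, \cdot)$ may vary with $\omega$ even on $\Omega_\alpha$, and consequently the admissible contours in the fusion construction vary with $\omega$ as well, so measurability is not immediate.

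This is precisely where Conjecture \ref{conjRunge} is needed. Applying it on $\Omega_\alpha$, one approximates $r_i(\omega, \cdot)$ uniformly on $A_i^\alpha \cup C^\alpha$, up to a prescribed measurable error of size $\varepsilon(\omega)$, by a random rational function $\tilde r_i(\omega, \cdot)$ whose poles lie in a single fixed set disjoint from $A_{3-i}^\alpha$. With the poles now frozen, the Roth contour integrals can be taken along a fixed curve $\gamma$ separating the two pole sets, and Proposition \ref{randomInt} applied to the random integrand $(\tilde r_2(\omega, w) - \tilde r_1(\omega, w))/(w - z)$ yields the measurability of $h_i(\cdot, z)$, and hence of $r(\cdot, z)$. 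Assembling the constructions over the countably many pieces $\Omega_\alpha$ produces the sought generalized random rational function $r$ on $\Omega \times \C$, with the measurable bound $A(\omega)\varepsilon(\omega)$ (the constant $A^\alpha$ inflated by an absolute factor to absorb the Runge approximation error). Without Conjecture \ref{conjRunge} one cannot freeze the poles measurably, which is exactly why the fusion statement must remain conditional on it.
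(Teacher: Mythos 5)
First, a point of comparison: the paper does not prove this statement. It is stated as Conjecture \ref{fusion}, accompanied only by the one-sentence remark that it ``would follow directly from Conjecture \ref{conjRunge}, if the latter were true,'' with no derivation given. Your proposal is therefore rightly conditional on Conjecture \ref{conjRunge}, and two of its ingredients are certainly part of any intended argument: the countability hypothesis does give a countable measurable partition $\Omega=\bigcup_\alpha\Omega_\alpha$ on which $(K_1,K_2,k)$ is constant (singletons of $\mathcal K^\prime(\C)$ are closed, hence Borel, and the maps are measurable), the classical fusion lemma applied for each fixed $\omega$ already yields existence of $r(\omega,\cdot)$ with the constant $A^\alpha=A(A_1^\alpha,A_2^\alpha)$, and the whole remaining content is the measurability of the selection $\omega\mapsto r(\omega,\cdot)$. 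You identify that correctly.

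The way you then obtain measurability has two genuine gaps. (1) Conjecture \ref{conjRunge} as stated produces random rational approximants with no control whatsoever over where their poles lie, so the step in which you replace $r_i$ by $\tilde r_i$ ``whose poles lie in a single fixed set disjoint from $A_{3-i}^\alpha$'' does not follow from it; you would need a random (i.e.\ $\omega$-measurable) pole-pushing lemma with prescribed target poles, which is an additional unproven ingredient of the same nature as the conjecture itself. Moreover, selecting from the sequence of Conjecture \ref{conjRunge} a single term with error below $\varepsilon(\omega)$, measurably in $\omega$, itself requires an argument (measurability of $\omega\mapsto\norm{r_i(\omega,\cdot)-R_k(\omega,\cdot)}_{A_i^\alpha\cup C^\alpha}$ and a first-hitting-index selection). (2) More seriously, the fusion step you describe --- a Cauchy integral of $(\tilde r_2-\tilde r_1)(w)/(w-z)$ over a fixed contour $\gamma$ separating the two pole sets --- cannot produce the bound $A^\alpha\varepsilon(\omega)$: such an integral merely splits off principal parts, and its size on $K_j\cup k$ is controlled by $\abs{\tilde r_2-\tilde r_1}$ on $\gamma$, about which the hypothesis says nothing; smallness is assumed only on $k(\omega)$, which need have no relation to $\gamma$. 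Extracting a constant depending only on $K_1,K_2$ from smallness on an arbitrary third compact set is exactly the nontrivial content of Roth's lemma (proved in \cite{Gai} via a $\overline\partial$-localization argument with a cutoff adapted to $K_1,K_2$, not via partial fractions or a separating contour), so ``the classical bounds'' cannot be invoked for the construction as you describe it; and since your measurability argument is built on that construction, it does not go through as written. A route more consistent with the paper's own methods would be to apply the selection theorem (Theorem \ref{krn}, or Theorem 1 of \cite{AB1984} as in Theorem \ref{OkaWeil}) on each $\Omega_\alpha$ to the multifunction $\omega\mapsto\{q:\norm{q-r_j(\omega,\cdot)}_{A_j^\alpha\cup C^\alpha}<A^\alpha\varepsilon(\omega),\ j=1,2\}$, which the classical lemma shows is nonempty; but verifying weak measurability and the topological hypotheses there, and avoiding an ``almost everywhere'' loss, is precisely the difficulty that presumably led the authors to leave the statement as a conjecture.
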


We say a random compact set $K$ is uniformly separable if there exists a countable subset $E$ of $\C^n$ whose intersection with 
$K(\omega)$ is dense in $K(\omega)$ for every $\omega \in \Omega$. In a way it is a generalization of random compact sets taking countably many values. In fact, we have the following proposition.

\begin{proposition}
Let $K$ be a random compact set. If $K$ takes countably many values or if
\begin{equation}\label{unifsep}
\abs{\bigcup_{\omega \in \Omega} K(\omega)\setminus \overline{\intr K(\omega)}} \leq \aleph_0.
\end{equation}
then $K$ is uniformly separable. Moreover, 
if $K(\omega)$ is the closure of a bounded open subset of $\C^n$ for all $\omega \in \Omega$, then $K$ is uniformly separable.
\end{proposition}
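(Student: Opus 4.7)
The plan is to establish the three assertions jointly, using the decomposition
\[
    K = \overline{\intr K} \,\cup\, \bigl(K \setminus \overline{\intr K}\bigr)
\]
valid for any compact $K \subset \C^n$. A countable dense subset of $K$ can then be assembled from a countable dense subset of the ``regular part'' $\overline{\intr K}$ together with any set containing the ``residual part'' $K \setminus \overline{\intr K}$. For the regular part I will invoke the standard topological fact that $(\Q + i\Q)^n \cap U$ is dense in $\overline U$ for every open $U \subset \C^n$.

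For the countably-many-values assertion, I would enumerate the values $K_1, K_2, \ldots$, extract a countable dense subset $D_j \subset K_j$ for each (each $K_j$ is separable as a compact metric space), and take $E = \bigcup_j D_j$.

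For the assertion under hypothesis \eqref{unifsep}, I would set
\[
    B = \bigcup_{\omega \in \Omega} \bigl(K(\omega) \setminus \overline{\intr K(\omega)}\bigr),
\]
which is countable by hypothesis, and propose $E = B \cup (\Q+i\Q)^n$. For a fixed $\omega$, the trace $E \cap K(\omega)$ contains both $K(\omega) \setminus \overline{\intr K(\omega)}$ (as a subset of $B$) and $(\Q+i\Q)^n \cap \intr K(\omega)$, which is dense in $\overline{\intr K(\omega)}$ by the fact recalled above. The decomposition then yields density of $E \cap K(\omega)$ in $K(\omega)$.

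For the ``moreover'' clause, I would observe that $K(\omega) = \overline{U(\omega)}$ with $U(\omega)$ open entails $U(\omega) \subseteq \intr K(\omega)$, hence $K(\omega) = \overline{U(\omega)} \subseteq \overline{\intr K(\omega)} \subseteq K(\omega)$, so the residual part is empty. Thus \eqref{unifsep} holds trivially with $B = \emptyset$, and the previous case applies with $E = (\Q+i\Q)^n$. The only mild obstacle is verifying the density of $(\Q+i\Q)^n \cap \intr K(\omega)$ in $\overline{\intr K(\omega)}$ simultaneously for all $\omega$; once this topological observation is in hand, the rest is bookkeeping and a single fixed countable $E$ serves every $\omega$ as required.
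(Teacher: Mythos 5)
Your proposal is correct and follows essentially the same route as the paper: in both the key move is to take $E$ to be the union of the countable residual set $\bigcup_\omega \bigl(K(\omega)\setminus\overline{\intr K(\omega)}\bigr)$ with a fixed countable dense subset of $\C^n$, and the countably-many-values case is handled by uniting dense subsets of the countably many values. The only difference is cosmetic: you verify density via the decomposition $K=\overline{\intr K}\cup(K\setminus\overline{\intr K})$, whereas the paper runs a case analysis on whether a given point of $K(\omega)$ is isolated, a limit of interior points, or neither; your version also makes explicit the argument for the ``moreover'' clause, which the paper dismisses as obvious.
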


\begin{proof}
Let $K$ be a random compact set. If $K$ takes countably many values, for each $\omega,$ choose a countable dense subset $E(\omega)$ of $K(\omega),$ and set $E=\cup_\omega E(\omega).$ This set is countable since it can be viewed as a countable union of countable sets and for every $\omega$, its restriction to $K(\omega)$ is dense in $K(\omega)$. $K$ is therefore uniformly separable.

Let now $K$ be a random compact set satisfying \eqref{unifsep}. Let $T$ be a countable dense subset of $\C^n$. Take $E$ as
\begin{equation*}
E = \left(\bigcup_{\omega \in \Omega} U(\omega)\right) \cup T.
\end{equation*}
where $U(\omega) = K(\omega)\setminus \overline{\intr K(\omega)}$.
By hypothesis, $E$ is a countable subset of $\C^n$. Fix $\omega \in \Omega$ and let $z$ be a point in $K(\omega)$. We wish to prove that $z$ is in the closure of $K(\omega) \cap E$. By using that $U(\omega) \subset K(\omega)$, we have that
\begin{equation*}
K(\omega) \cap E = K(\omega) \cap \left[\left(\bigcup_{\omega \in \Omega} U(\omega)\right) \cup T\right] \supset U(\omega) \cup (K(\omega) \cap T)
\end{equation*}
and therefore it is sufficient to prove that $z$ is in the closure of either $U(\omega)$ or $K(\omega) \cap T$.

If $z$ is an isolated point, then $z \not\in \overline{\intr K(\omega)}$, which means that $z \in U(\omega)$ and is therefore in $\overline{U(\omega)}$.

If $z$ is a limit point, it is either the limit of interior points or not. If it is the limit of interior points, we can find a sequence $\{z_j\}$ in $T \cap K(\omega)$ such that $z_j \rightarrow z$. Thus $z$ is in the closure of $T \cap K(\omega)$. If it is not the limit of interior points, then $z \not\in \overline{\intr K(\omega)}$ and therefore $z \in U(\omega) \subset \overline{U(\omega)}$.

In every case, we have that $z$ is in the closure of $U(\omega)$ or $K(\omega) \cap T$. It follows that $K$ is uniformly separable.

The last statement of the proposition is obvious.
\end{proof}

\begin{theorem}
Let $K$ be a uniformly separable random compact set. Let ${f_j}$ be a sequence of generalized random functions such that
\begin{equation*}
    \forall \omega \in \Omega, f_j(\omega, \cdot) \xrightarrow{unif} f(\omega, \cdot) \text{ on } K(\omega).
\end{equation*}
Then, there exists a sequence ${F_j}$ of generalized continuous functions such that
\begin{equation*}
    F_j \xrightarrow{unif} f \text{ on } \Gr K
\end{equation*}
and such that for all $\omega \in \Omega$ and $j \in \N$, there exists a $k \in \N$ with $F_j(\omega, \cdot) = f_k(\omega, \cdot)$.
\end{theorem}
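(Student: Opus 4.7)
The plan is to construct each $F_j$ as a measurable selection from the sequence $\{f_k\}$, i.e., to set $F_j(\omega,z) = f_{k(\omega,j)}(\omega,z)$, where $k(\omega,j)$ is the smallest index $k$ such that $\sup_{z\in K(\omega)}\abs{f_k(\omega,z)-f(\omega,z)}<1/j$. Such an index exists for every $\omega$ by the hypothesis of pointwise-in-$\omega$ uniform convergence. Uniform convergence on $\Gr K$ is then immediate, since for every $(\omega,z)\in\Gr K$,
\begin{equation*}
\abs{F_j(\omega,z)-f(\omega,z)} \le \sup_{w\in K(\omega)}\abs{f_{k(\omega,j)}(\omega,w)-f(\omega,w)} < 1/j.
\end{equation*}

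The real work is to show that the selector $\omega\mapsto k(\omega,j)$ is measurable, since this is where the uniform separability hypothesis enters. It suffices to show that for each fixed $k$ and $j$, the set $A_{k,j}=\{\omega:\norm{f_k(\omega,\cdot)-f(\omega,\cdot)}_{K(\omega)}<1/j\}$ is measurable, because $\{\omega:k(\omega,j)=k\}$ is then a finite Boolean combination of such sets. Let $E\subset\C^n$ be a countable set witnessing uniform separability of $K$. Since the functions in sight (namely each $f_k(\omega,\cdot)$ and hence $f(\omega,\cdot)$, which is a uniform limit of them) are continuous on $K(\omega)$, the sup over $K(\omega)$ equals the sup over $E\cap K(\omega)$. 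Writing the strict inequality through rationals,
\begin{equation*}
A_{k,j} = \bigcup_{\substack{r\in\Q\\ r<1/j}} \bigcap_{z\in E}\bigl(\{\omega:z\notin K(\omega)\}\cup\{\omega\in K^{-1}(z):\abs{f_k(\omega,z)-f(\omega,z)}\le r\}\bigr).
\end{equation*}
Each $K^{-1}(z)$ is measurable (as recorded just before Conjecture \ref{fusion}), and on $K^{-1}(z)$ both $f_k(\cdot,z)$ and $f(\cdot,z)$ are measurable by the definition of generalized random function. Thus $A_{k,j}$ is a countable union of countable intersections of measurable sets, hence measurable.

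Once $\omega\mapsto k(\omega,j)$ is known to be measurable, $F_j$ is a generalized random (and, from the representation $F_j(\omega,\cdot)=f_{k(\omega,j)}(\omega,\cdot)$, continuous) function: for each fixed $z\in\C^n$ with $K^{-1}(z)\neq\emptyset$, partition $K^{-1}(z)$ into the measurable pieces $\{\omega\in K^{-1}(z):k(\omega,j)=k\}$, on each of which $F_j(\cdot,z)$ agrees with the measurable function $f_k(\cdot,z)$. The identity $F_j(\omega,\cdot)=f_{k(\omega,j)}(\omega,\cdot)$ gives precisely the extra conclusion that for every $\omega$ and $j$ there is a $k\in\N$ with $F_j(\omega,\cdot)=f_k(\omega,\cdot)$.

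The main obstacle, as indicated, is the measurability of the selection: without uniform separability one cannot in general reduce the sup over the moving compactum $K(\omega)$ to a sup over a fixed countable set, and the strict-inequality form of the defining condition forces the detour through rational thresholds to keep everything inside countable Boolean operations. Continuity of the $f_k(\omega,\cdot)$ (automatic in all intended applications, since the $f_k$ arise as random rational functions or random continuous functions) is used only to pass from the true sup to the countable sup.
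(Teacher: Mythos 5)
Your proof is correct, and it reaches the conclusion by a genuinely more elementary route than the paper's. The paper packages the choice of index into a multifunction $N(\omega)=\{j:\abs{f(\omega,z)-f_k(\omega,z)}\le\varepsilon,\ \forall z\in K(\omega),\ \forall k\ge j\}$ taking nonempty closed values in $\N$, verifies that it is weakly measurable by essentially the same device you use (replacing the supremum over $K(\omega)$ by a supremum over the countable set $E$, with the complement of $K^{-1}(z)$ absorbed into each intersectand), and then invokes the Kuratowski--Ryll-Nardzewski selection theorem (Theorem \ref{krn}) to extract a measurable selection $\varphi$, setting $F(\omega,z)=f_{\varphi(\omega)}(\omega,z)$. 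You instead take the least admissible index explicitly, which is available because the index set is $\N$; measurability of $\{\omega:k(\omega,j)=k\}$ then reduces to measurability of the sets $A_{k,j}$, which you establish by the same countable-dense-set computation (your detour through rational thresholds correctly compensates for your use of a strict inequality where the paper uses $\le\varepsilon$). So the measurability core is identical; what your version buys is the elimination of the selection theorem, which is overkill for closed subsets of $\N$. Two small points to align with the paper: the measurability of $f(\cdot,z)$ on $K^{-1}(z)$, needed inside your $A_{k,j}$, should be recorded explicitly as a consequence of pointwise convergence of measurable functions (Proposition \ref{limsmooth}), as the paper does at the outset; and, like the paper, you use continuity of $f_k(\omega,\cdot)$ on $K(\omega)$ to pass from the sup over $K(\omega)$ to the sup over $E\cap K(\omega)$ --- a hypothesis not contained in the paper's formal definition of a generalized random function but implicit in the statement and in all intended applications, which you rightly flag.
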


\begin{proof}
The function $f$ is  well defined as a function from $Gr K$ to $\C.$ For each $z\in\C^n$ such that $K^{-1}(z)\not=\emptyset,$ the function $f(\cdot,z)$ is measurable on $K^{-1}(z)$ by Proposition \ref{limsmooth}. Thus, $f$ is a generalized random function. 

Define the multifunction
\begin{equation*}
    N(\omega) = \{j : \abs{f(\omega, z) - f_k(\omega, z)} \leq \varepsilon, \text{ }\forall z \in K(\omega), \forall k \geq j\}.
\end{equation*}
This multifunction takes its values in the set of non-empty closed subsets of $\N$ since the uniform convergence guarantees non-emptiness.

Let $K'=\cup_{\omega \in \Omega} K(\omega)$. Let $O$ be an open set of $\N$ and denote by $E \subset K'$ the countable set, dense in $K(\omega)$ for all $\omega \in \Omega$. Then
\begin{align*}
&\{\omega : N(\omega) \cap O\neq \emptyset\} \\
&= \bigcup_{a \in O} \{\omega : a \in N(\omega) \} \\
&= \bigcup_{a \in O}\bigcap_{b \geq a} \{\omega : \abs{f(\omega, z) - f_b(\omega, z)} \leq \varepsilon, \text{ }\forall z\in K(\omega)\} \\
&= \bigcup_{a \in O}\bigcap_{b \geq a} \bigcap_{z \in K'} \Omega\setminus\{\omega \in K^{-1}(z): \abs{f(\omega, z) - f_b(\omega, z)} > \varepsilon \} \\
&= \bigcup_{a \in O}\bigcap_{b \geq a} \bigcap_{z \in K'} \left[\{\omega \in K^{-1}(z): \abs{f(\omega, z) - f_b(\omega, z)} \leq \varepsilon \} \cup \{\omega \not\in K^{-1}(z)\} \right]\\
&= \bigcup_{a \in O}\bigcap_{b \geq a}
\bigcap_{z \in E}\left[\{\omega \in K^{-1}(z): \abs{f(\omega, z) - f_b(\omega, z)} \leq \varepsilon \} \cup \{\omega \not\in K^{-1}(z)\} \right].
\end{align*}
We show in detail the third and fifth equalities. Fix $b \in \N$ and let
\begin{align*}
A &= \{\omega : \abs{f(\omega, z) - f_b(\omega, z)} \leq \varepsilon, \text{ }\forall z\in K(\omega)\};\\
B &= \bigcap_{z \in K'} \Omega\setminus\{\omega \in K^{-1}(z): \abs{f(\omega, z) - f_b(\omega, z)} > \varepsilon \};\\
C &= \bigcap_{z \in K'} \left[\{\omega \in K^{-1}(z): \abs{f(\omega, z) - f_b(\omega, z)} \leq \varepsilon \} \cup \{\omega \not\in K^{-1}(z)\} \right]; \\
D &= \bigcap_{z \in E}\left[\{\omega \in K^{-1}(z): \abs{f(\omega, z) - f_b(\omega, z)} \leq \varepsilon \} \cup \{\omega \not\in K^{-1}(z)\} \right].
\end{align*}
We wish to show that $A = B$ to prove the third equality.

Let $\omega' \in A$. Then, for all $z \in K(\omega')$, $\omega'$ is in $\{\omega \in K^{-1}(z): \abs{f(\omega, z) - f_b(\omega, z)} \leq \varepsilon\}$. Therefore,
\begin{align*}
&&\omega' \in \bigcap_{z \in K(\omega')} \{\omega \in K^{-1}(z): \abs{f(\omega, z) - f_b(\omega, z)}\leq \varepsilon\} &&\\
&\Rightarrow &\omega' \in \bigcap_{z \in K(\omega')} K^{-1}(z)\setminus\{\omega \in K^{-1}(z): \abs{f(\omega, z) - f_b(\omega, z)} > \varepsilon\}& \\
&\Rightarrow &\omega' \in \bigcap_{z \in K(\omega')} \Omega\setminus\{\omega \in K^{-1}(z): \abs{f(\omega, z) - f_b(\omega, z)} > \varepsilon\} &\\
&\Rightarrow &\omega' \in \bigcap_{z \in K'} \Omega\setminus\{\omega \in K^{-1}(z): \abs{f(\omega, z) - f_b(\omega, z)} > \varepsilon\} &.
\end{align*}
where the second implication follows from the fact that if $z \in K(\omega')$, then $\omega' \in K^{-1}(z)$. Thus, $A \subset B$.

Now let $\omega' \in B$. Therefore,
\begin{align*}
&&\omega' \in \bigcap_{z \in K'} \Omega\setminus\{\omega \in K^{-1}(z): \abs{f(\omega, z) - f_b(\omega, z)} > \varepsilon\} & \\
&\Rightarrow &\omega' \in \bigcap_{z \in K(\omega')} \Omega \setminus \{\omega \in K^{-1}(z): \abs{f(\omega, z) - f_b(\omega, z)} > \varepsilon\} &. 
\end{align*}\\
But for each $z\in K(\omega')$ we have $\omega'\in K^{-1}(z),$ so
\begin{align*}
&&\omega' \in \bigcap_{z \in K(\omega')} K^{-1}(z) \setminus \{\omega \in K^{-1}(z): \abs{f(\omega, z) - f_b(\omega, z)} > \varepsilon\} & \\
&\Rightarrow &\omega' \in \bigcap_{z \in K(\omega')} \{\omega \in K^{-1}(z): \abs{f(\omega, z) - f_b(\omega, z)} \leq \varepsilon\} & \\
&\Rightarrow &\omega' \in \{\omega : \abs{f(\omega, z) - f_b(\omega, z)} \leq \varepsilon, \text{ }\forall z \in K(\omega')\}.
\end{align*}
We have thus shown that $A = B$ and this justifies the third equality.

We now wish to show that $C = D$ to show the fifth equality.

Since $E \subset K'$, it is obvious that $C \subset D$. Now let $\omega' \not\in C$. Therefore, there exists a $z \in K(\omega')$ such that $\abs{f(\omega', z) - f_b(\omega', z)} > \varepsilon$. By density of $E$ in $K(\omega')$, there exists a sequence $\{z_j\} \subset K(\omega') \cap E$ such that $z_j \rightarrow z$. By continuity and convergence of the sequence, there exists $z_N \in K(\omega')\cap E$ such that $\abs{f(\omega', z_N) - f_b(\omega', z_N)} > \varepsilon$. Therefore,
\begin{eqnarray*}
\omega' \not\in \{\omega : \abs{f(\omega, z_N) - f_b(\omega, z_N)}\le\varepsilon\} & \text{and} & \omega' \not\in \{\omega \not\in K^{-1}(z_N)\}
\end{eqnarray*}
and thus
\begin{equation*}
\omega' \not\in \bigcap_{z \in E}\left[\{\omega \in K^{-1}(z): \abs{f(\omega, z) - f_b(\omega, z)} \leq \varepsilon \} \cup \{\omega \not\in K^{-1}(z)\} \right].
\end{equation*}
This shows that $D \subset C$. We have thus shown that $C = D$ and this justifies the fifth equality.

For fixed $z$, the function $\abs{f(\cdot, z) - f_b(\cdot, z)}$ as a function from $K^{-1}(z)$ to $\R$ is measurable and therefore, with the help of Lemma \ref{F sub f}, we see that 
the function $N$ is weakly measurable. It follows that $N$ admits a measurable selection $\varphi$ by Theorem \ref{krn}.

Define $F(\omega,z) = f_{\varphi(\omega)}(\omega, z)$. Let $U$ be a measurable subset of $\C$ and fix $z \in \C^n$. Then
\begin{equation*}
\{\omega \in K^{-1}(z): F(\omega, z) \in U\} = \bigcup_{j \in \N} \{\omega \in K^{-1}(z): f_j(\omega, z) \in U\} \cap \{\omega : \varphi(\omega) = j\}
\end{equation*}
and therefore $F$ is a generalized continuous random function by the measurability of $f_j$ and $\varphi$. Also, by construction, we have that
\begin{equation*}
\abs{f(\omega, z) - F(\omega, z)} = \abs{f(\omega, z) - f_{\varphi(\omega)}(\omega, z)} \leq \varepsilon
\end{equation*}
and so by taking a sequence decreasing to $0$, we can construct a sequence $\{F_j\}$ with the desired properties.
\end{proof}

In particular, we can apply the previous theorem to families of functions.
For each nonempty compact set $K\subset\C^n,$ let $A(K)$ be a family of continuous functions on $K$. Thus,
$$
	A:\mathcal K^\prime(\C^n)\rightarrow \mathcal P(C(K)).
$$
Suppose further that if $Q \subset K$ is a compact set and $f \in A(K)$, then $f|_Q \in A(Q)$. For example, $A(K)$ can be the family of holomorphic, polynomial, or harmonic functions on $K$ 
\color{black}
or the family of rational functions having no poles on $K.$ 
\color{black}
For a random compact set $K(\omega),$ we define a generalized $A(K)$-random function as  a generalized random 
function $f : \Gr K \rightarrow \C$ such that $f(\omega, \cdot) \in A(K(\omega))$ for all $\omega \in \Omega.$

\color{black}
Suppose  $K\mapsto A(K),$ for $K\in \mathcal K^\prime(\C^n),$ is as above and 
$K:\Omega\rightarrow \mathcal K^\prime(\C^n)$ is a  
random compact subset of $\C^n,$
\color{black}
Denote by $A_{[\Omega]}(K)$ the set of generalized $C(K)$-random functions $f : \Gr K \rightarrow \C,$ for which  there exists a sequence of generalized $A(K)$-random functions $\{f_j\}_{j=1}^\infty,$ 
\color{black}
such that, for each $\omega \in \Omega,$ 
\color{black}
$f_j(\omega, \cdot) \rightarrow f(\omega, \cdot)$ uniformly on $K(\omega).$
Denote by $A_{[\Omega]}^{unif}(K)$ the set of generalized $C(K)$-random functions $f : \Gr K \rightarrow \C,$ for which there exists a sequence of generalized $A(K)$-random functions $\{f_j\}_{j=1}^\infty$ such that $f_j \rightarrow f$ uniformly on $\Gr K$. 

The following theorem follows directly from the previous one.

\begin{theorem} \label{convUnif}
\color{black}
Suppose  $K\mapsto A(K),$ for $K\in \mathcal K^\prime(\C^n),$ is as above and 
$K:\Omega\rightarrow \mathcal K^\prime(\C^n)$ is a uniformly separable 
random compact subset of $\C^n,$
\color{black} 
then $A_{[\Omega]}^{unif}(K) = A_{[\Omega]}(K)$.
\end{theorem}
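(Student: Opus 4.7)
The plan is to establish the two inclusions separately, noting that one is trivial while the other is the content of the previous theorem. The inclusion $A_{[\Omega]}^{unif}(K) \subseteq A_{[\Omega]}(K)$ is immediate: if $\{f_j\}$ is a sequence of generalized $A(K)$-random functions with $f_j \to f$ uniformly on $\Gr K$, then in particular for each fixed $\omega\in\Omega$ the convergence $f_j(\omega,\cdot)\to f(\omega,\cdot)$ is uniform on $K(\omega)$, so $f \in A_{[\Omega]}(K)$.

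For the reverse inclusion, let $f \in A_{[\Omega]}(K)$ and fix a witnessing sequence $\{f_j\}_{j=1}^\infty$ of generalized $A(K)$-random functions such that $f_j(\omega,\cdot) \to f(\omega,\cdot)$ uniformly on $K(\omega)$ for every $\omega \in \Omega$. Since $K$ is uniformly separable by hypothesis, I apply the previous theorem to this sequence. This produces a sequence $\{F_j\}_{j=1}^\infty$ of generalized continuous random functions such that $F_j \to f$ uniformly on $\Gr K$, and with the additional ``selection'' property that for every $(\omega,j)\in\Omega\times\N$ there is an index $k=k(\omega,j)\in\N$ for which $F_j(\omega,\cdot) = f_k(\omega,\cdot)$.

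The only remaining point is to check that each $F_j$ is a generalized $A(K)$-random function, not merely a $C(K)$-random one. Measurability in $\omega$ is already supplied by the previous theorem, so it suffices to verify that $F_j(\omega,\cdot) \in A(K(\omega))$ for every $\omega$. But by the selection property, $F_j(\omega,\cdot) = f_{k(\omega,j)}(\omega,\cdot)$, and since $f_{k(\omega,j)}$ is a generalized $A(K)$-random function, we have $f_{k(\omega,j)}(\omega,\cdot) \in A(K(\omega))$. Hence $F_j$ is a generalized $A(K)$-random function, and the uniform convergence $F_j \to f$ on $\Gr K$ shows $f \in A_{[\Omega]}^{unif}(K)$.

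There is essentially no obstacle at this stage: the real work — extracting joint uniform convergence from separate uniform convergence via a measurable selection — is absorbed into the preceding theorem, where uniform separability is crucial for the weak measurability of the multifunction $N(\omega)$ and hence for the application of the Kuratowski–Ryll-Nardzewski selection theorem. Note also that the closure assumption that $A(K)$ restrict well to compact subsets plays no role here, because $F_j(\omega,\cdot)$ is simply equal to some $f_k(\omega,\cdot)$ on all of $K(\omega)$ rather than a restriction to a smaller set.
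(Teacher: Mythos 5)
Your proof is correct and matches the paper's intent exactly: the paper simply asserts that Theorem \ref{convUnif} ``follows directly from the previous one,'' and your argument supplies precisely the details of that deduction, including the key observation that the selection property $F_j(\omega,\cdot)=f_{k(\omega,j)}(\omega,\cdot)$ guarantees each $F_j$ is a generalized $A(K)$-random function.
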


\color{black}
Suppose  $K\mapsto A(K),$ for $K\in \mathcal K^\prime(\C^n),$ is as above. 
Denote by $A_{\Omega}(K)$ the set of  $C(K)$-random functions $f : \Omega\times K \rightarrow \C,$ for which  there exists a sequence of $A(K)$-random functions $\{f_j\}_{j=1}^\infty,$ 
such that, for each $\omega \in \Omega,$ 
$f_j(\omega, \cdot) \rightarrow f(\omega, \cdot)$ uniformly on $K.$
Denote by $A_{\Omega}^{unif}(K)$ the set of  $C(K)$-random functions $f : \Omega\times K \rightarrow \C,$ for which there exists a sequence of  $A(K)$-random functions $\{f_j\}_{j=1}^\infty$ such that $f_j \rightarrow f$ uniformly on $\Omega\times K.$

\begin{corollary}\label{unif}
If $K\mapsto A(K),$ for $K\in \mathcal K^\prime(\C^n),$ is as above, then 
$A_{\Omega}^{unif}(K) = A_{\Omega}(K)$.
\end{corollary}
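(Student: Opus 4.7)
The plan is to deduce Corollary \ref{unif} from Theorem \ref{convUnif} by regarding the fixed compact set $K$ as a constant random compact set, namely the mapping $\widetilde K:\Omega\to\mathcal K^\prime(\C^n)$ given by $\widetilde K(\omega)=K$ for every $\omega\in\Omega$. Constant mappings are measurable, so $\widetilde K$ is a random compact set in the sense of the previous section.

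First, I would check that $\widetilde K$ is uniformly separable. Since $K\subset\C^n$ is a compact metric space, it is separable; pick a countable dense subset $E\subset K$. Then for each $\omega\in\Omega$ we have $\widetilde K(\omega)\cap E=K\cap E=E$, which is dense in $K=\widetilde K(\omega)$. Thus $\widetilde K$ satisfies the hypothesis of Theorem \ref{convUnif}. Next, I would observe that under this identification $\Gr\widetilde K=\Omega\times K$, so that a generalized random function on $\Gr\widetilde K$ is exactly a random function $\Omega\times K\to\C$ in the ordinary sense; analogously, a generalized $A(\widetilde K)$-random function $f:\Gr\widetilde K\to\C$ is exactly an $A(K)$-random function $f:\Omega\times K\to\C$, since the condition $f(\omega,\cdot)\in A(\widetilde K(\omega))$ reduces to $f(\omega,\cdot)\in A(K)$. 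It follows that
\begin{equation*}
A_{[\Omega]}(\widetilde K)=A_\Omega(K)\quad\text{and}\quad A_{[\Omega]}^{unif}(\widetilde K)=A_\Omega^{unif}(K),
\end{equation*}
because pointwise (resp.\ joint) uniform convergence on $\Gr\widetilde K=\Omega\times K$ is exactly pointwise (resp.\ joint) uniform convergence on $\Omega\times K$.

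With these identifications in place, Theorem \ref{convUnif} applied to $\widetilde K$ yields $A_{[\Omega]}^{unif}(\widetilde K)=A_{[\Omega]}(\widetilde K)$, which is precisely the desired equality $A_\Omega^{unif}(K)=A_\Omega(K)$. I expect no real obstacle here: the entire content of the corollary is the reduction of the constant case to the generalized framework, and the only verification of substance is the uniform separability, which is immediate from the separability of $\C^n$. Everything else is a matter of unwinding definitions.
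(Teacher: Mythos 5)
Your proposal is correct and is essentially the paper's own (implicit) argument: the corollary is obtained by viewing $K$ as the constant random compact set, noting it is uniformly separable and that $\Gr K=\Omega\times K$, so that generalized $A$-random functions reduce to ordinary ones and Theorem \ref{convUnif} applies verbatim. The paper even records the identifications $\Gr K=\Omega\times K$ and ``generalized random function $=$ random function'' in the constant case just before stating the corollary, so your reduction matches its intent exactly.
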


\color{black}

We shall now, following rather closely the method of \cite{AB1984}, prove an "almost everywhere" version of a random Oka-Weil theorem. We first need the following lemmas :

\begin{lemma}\label{P closed}
	The set $P(k,m)$ of polynomials in $\C^n$ of degree at most $k$ and coefficients bounded by $m$ (in norm) is closed in every $C(K,\C)$ for every non-empty compact set $K\subseteq \C^n$.
\end{lemma}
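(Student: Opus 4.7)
The plan is to exploit the fact that $P(k,m)$ is parametrized by a finite-dimensional compact parameter space. Concretely, write any $p \in P(k,m)$ as $p(z) = \sum_{|\alpha| \leq k} a_\alpha z^\alpha$, so $p$ is determined by the tuple of coefficients $(a_\alpha)_{|\alpha| \leq k} \in \overline{B_m(0)}^N \subset \C^N$, where $N$ is the number of multi-indices $\alpha \in (\N \cup \{0\})^n$ with $|\alpha| \leq k$. Since $\overline{B_m(0)}^N$ is compact, any sequence of such tuples has a convergent subsequence.

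First I would fix a non-empty compact $K \subseteq \C^n$ and take $f \in C(K,\C)$ in the uniform closure of $P(k,m)$. Choose a sequence $p_j \in P(k,m)$ with $p_j \to f$ uniformly on $K$, and write $p_j(z) = \sum_{|\alpha| \leq k} a_\alpha^{(j)} z^\alpha$ with $|a_\alpha^{(j)}| \leq m$. By the compactness observation above, pass to a subsequence (still denoted $p_j$) so that $a_\alpha^{(j)} \to a_\alpha$ for each multi-index $\alpha$, with $|a_\alpha| \leq m$. Set $p(z) = \sum_{|\alpha| \leq k} a_\alpha z^\alpha$, which lies in $P(k,m)$.

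Next I would verify that $p_j \to p$ uniformly on $K$. Since $K$ is compact, there is an $R > 0$ with $|z^\alpha| \leq R^{|\alpha|} \leq R^k$ for all $z \in K$ and all $|\alpha| \leq k$. Hence
\begin{equation*}
\sup_{z \in K} |p_j(z) - p(z)| \leq \sum_{|\alpha| \leq k} |a_\alpha^{(j)} - a_\alpha| \cdot R^k \longrightarrow 0
\end{equation*}
as $j \to \infty$. Combined with $p_j \to f$ uniformly on $K$, uniqueness of uniform limits gives $f = p|_K$, so $f \in P(k,m)$ and we conclude that $P(k,m)$ is closed in $C(K,\C)$.

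There is no real obstacle here: the argument is just Bolzano--Weierstrass on the finite-dimensional coefficient ball, combined with the trivial fact that coefficient convergence of polynomials of bounded degree implies uniform convergence on any compact subset of $\C^n$. The only point to watch is that different polynomials in $P(k,m)$ might restrict to the same element of $C(K,\C)$ (e.g.\ when $K$ fails to determine polynomials of degree $\le k$), but this causes no problem, since we only need to exhibit \emph{some} element of $P(k,m)$ whose restriction equals $f$.
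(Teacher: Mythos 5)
Your proof is correct and takes essentially the same route as the paper's: identify each polynomial with its coefficient tuple in the compact ball of $\C^{N}$, extract a convergent subsequence by Bolzano--Weierstrass, and identify the uniform limit on $K$ with the restriction of the limiting polynomial. (The only cosmetic point is that your bound $|z^\alpha|\le R^{|\alpha|}\le R^k$ requires $R\ge 1$, which you may of course assume without loss of generality.)
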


\begin{proof}
	Fix a non-empty compact set $K$. Let there be a sequence $p_i$ of polynomials in $P(k,m)$ converging uniformly to a function $f\in C(K,\C)$. We wish to show $f$ is in the restriction of $P(k,m)$ over $K$.
	
	We denote $d_k$ the total number of possible terms in such polynomials. We in fact have that
	\begin{align*}
		d_k=\sum_{\ell=0}^k\binom{n+\ell-1}{n-1}.
	\end{align*}
	
	We denote $a_{i,\alpha}$, where $\alpha\in \N^n$ the coefficient of $z^\alpha$. We may then form the $d_k$-tuplets 
 of coefficients (by choosing an ordering) : $a_i\in \C^{d_k}$ for $p_i$. To simplify our work, we may endow $\C^{d_k}$ with the max norm :
$$\|a_i\|_\infty=\max_\alpha |a_{i,\alpha}|.$$
	
	The $a_{i,\alpha}$ are bounded in norm by $m$. Thus, the $a_i$ are all in the ball $\overline{B_{m}}(0)\subset\C^{d_k} $, which is a compact set, and thus  the sequence $\{a_i\}_{i=0}^\infty$ has a subsequence $\{a_i'\}_{i=0}^\infty$ which converges to a point $b\in\C^{d_k}.$ We denote the associated subsequence of polynomials as $\{p_i'\}_{i=0}^\infty$. 

Since $p_i\rightarrow f,$ the same is true for the subsequence $p_i'.$ But $p_i'$ converges on all compact sets to the polynomial $p(z)=\sum_{|\alpha|\leq k} b_\alpha z^\alpha.$ Thus, $f=p|_K.$ 
\end{proof}

\begin{lemma}
	The set of polynomials $\mathcal P(\C^n)$ over $\C^n$ is a Suslin subset of $C(K,\C)$ for every non-empty compact set $K\subseteq \C^n$.
\end{lemma}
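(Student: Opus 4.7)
The plan is to express $\mathcal{P}(\mathbb{C}^n)$, viewed as a subset of $C(K,\mathbb{C})$ via the restriction map, as a countable union of closed sets, and then invoke the fact that in a Polish space every Borel set is Suslin.

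First, I would observe that every polynomial $p\in\mathcal{P}(\mathbb{C}^n)$ has some finite degree $k$ and some finite bound $m$ on the absolute values of its coefficients, so
\begin{equation*}
	\mathcal{P}(\mathbb{C}^n)\big|_K \;=\; \bigcup_{k=0}^{\infty} \bigcup_{m=0}^{\infty} P(k,m)\big|_K.
\end{equation*}
By the previous lemma, each set $P(k,m)|_K$ is closed in $C(K,\mathbb{C})$, so the displayed union realises $\mathcal{P}(\mathbb{C}^n)$ as an $F_\sigma$ subset, hence a Borel subset, of $C(K,\mathbb{C})$.

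Next I would note that $C(K,\mathbb{C})$ equipped with the uniform norm is a Polish space: it is complete as a Banach space, and separable because $K$ is a compact metric space (one may, for instance, invoke the Stone--Weierstrass theorem together with the countability of polynomials with rational coefficients). In a Polish space every Borel set is Suslin (i.e.\ analytic), being the continuous image of a Polish space. Applying this general fact to the Borel set $\mathcal{P}(\mathbb{C}^n)|_K$ yields the claim.

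There is no real obstacle: the content is entirely in the previous lemma, which gives closedness of each $P(k,m)|_K$, and the rest is a standard descriptive set theory observation. The only minor point worth making explicit in the write-up is that the restriction map from polynomials to $C(K,\mathbb{C})$ need not be injective (two polynomials could agree on $K$), but this is irrelevant since we are only asserting a property of the image.
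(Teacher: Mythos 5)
Your proof is correct and follows essentially the same route as the paper: decompose $\mathcal P(\C^n)|_K$ as the countable union $\bigcup_{k}\bigcup_{m}P(k,m)|_K$ of sets that are closed by the previous lemma, conclude it is Borel, and then use that $C(K,\C)$ is Polish so that Borel sets are Suslin. Your added remark about the non-injectivity of the restriction map is a harmless clarification not present in the paper.
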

\begin{proof}
	Firstly, we have that
	\begin{align*}
		\mathcal P(\C^n)=\cup_{m=0}^\infty\cup_{k=0}^\infty P(k,m).
	\end{align*}
	If we fix a compact set $K\subset \C^n$, this is the countable union of sets, which by Lemma \ref{P closed} are closed relative to $C(K,\C)$. Thus, $\mathcal P(\C^n)|_K$ is a Borel subset of $C(K,\C)$.
	
	We have already shown in the proof of Proposition \ref{random element} that $C(K,\C)$ is separable. It is also complete, as a Cauchy sequence with the sup norm converges uniformly, and thus converges toward a continuous function. Thus, $C(K,\C)$ is a Polish space.
	
	We know that a Borel subset of a Polish space is Suslin.
\end{proof}

\begin{theorem} \label{OkaWeil}
	Let $(\Omega,\mathcal A, \mu)$ be a $\sigma$-finite measure space. Let $K$ be a random compact set that takes at most a countable number of different values. Let $\varepsilon$ be a positive measurable function defined on $\Omega.$ 
	
	Let $K$ be polynomially convex : $\forall \omega, K(\omega)=\widehat K(\omega)$. Then if $f$ is a generalized random function such that $f(\omega,\cdot)$ is holomorphic in a neighborhood of $K(\omega)$, for each $\omega$, then there exists a generalized random polynomial $p$ such that$$\|p(\omega,\cdot)-f(\omega,\cdot)\|_{K(\omega)}<\varepsilon(\omega)$$
	for all $\omega$ except a set $L\subset \Omega$ such that $\mu(L)=0$.
\end{theorem}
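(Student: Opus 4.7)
The plan is to exploit the countability of $K(\Omega)$ to reduce to countably many sub-problems with constant compact set, to set up on each a non-empty Suslin-valued multifunction of approximating polynomials, and to apply a measurable selection theorem on a $\sigma$-finite measure space.

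First, since $K$ takes at most countably many values $K_1, K_2, \ldots,$ I would partition $\Omega$ into the measurable sets $\Omega_j = K^{-1}(\{K_j\})$. On each $\Omega_j$, the compact set $K_j = \widehat{K_j}$ is fixed, and the restriction of the generalized random function $f$ is a random function in the standard sense: for each $z \in K_j \subset K^{-1}(z)$, the map $f(\cdot, z)$ is measurable on $\Omega_j$. By Corollary \ref{maps F to f}, this is equivalent to the measurability of $h_j \colon \Omega_j \to C(K_j,\C)$, $\omega \mapsto f(\omega,\cdot)$.

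Next, for each $\omega \in \Omega_j$ the classical Oka-Weil theorem (applied to $K_j$ polynomially convex and the function $f(\omega,\cdot)$ holomorphic in a neighborhood of $K_j$) furnishes a polynomial $p$ with $\|p - f(\omega,\cdot)\|_{K_j} < \varepsilon(\omega)$. Define the multifunction
\begin{equation*}
\Phi_j(\omega) = \{p \in \mathcal{P}(\C^n) : \|p - f(\omega,\cdot)\|_{K_j} < \varepsilon(\omega)\},
\end{equation*}
which has non-empty values in the Suslin subset $\mathcal{P}(\C^n) \subset C(K_j,\C)$ guaranteed by the preceding two lemmas. Weak measurability of $\Phi_j$ follows from the fact that, for any open $U \subset C(K_j,\C)$, the set $\{\omega \in \Omega_j : \Phi_j(\omega) \cap U \neq \emptyset\}$ is described via the measurable map $h_j$, the measurable function $\varepsilon$, and a continuous functional on $C(K_j,\C)$.

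Now I would invoke a measurable selection theorem for Suslin-valued multifunctions on a $\sigma$-finite measure space, along the lines used in \cite{AB1984}. This yields, modulo a $\mu$-null set $L_j \subset \Omega_j$, a measurable selection $\omega \mapsto p_j(\omega) \in \mathcal{P}(\C^n)$ satisfying $\|p_j(\omega) - f(\omega,\cdot)\|_{K_j} < \varepsilon(\omega)$. Gluing, define $p(\omega, z) = p_j(\omega)(z)$ for $\omega \in \Omega_j \setminus L_j$. Then $L = \bigcup_j L_j$ has $\mu(L) = 0$ by countable additivity, and for fixed $z$ the map $p(\cdot, z)$ is measurable on $\Omega \setminus L$ because on each $\Omega_j \setminus L_j$ it equals the composition of $p_j$ with continuous evaluation at $z$, and $\Omega_j$ is measurable.

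The main obstacle is the measurable selection step itself. Kuratowski-Ryll-Nardzewski in its standard form (Theorem \ref{krn}) requires closed non-empty values in a Polish space, whereas our values lie in a Suslin (not necessarily closed) subset of $C(K_j,\C)$; this is why one must use a stronger Suslin-valued selection theorem, and it is precisely at this step that the "except on a null set" conclusion enters and the $\sigma$-finiteness of $\mu$ is consumed. A secondary issue is that a point of $\mathcal{P}(\C^n)$ viewed inside $C(K_j,\C)$ is a priori only specified on $K_j$; this is resolved either by parametrizing $\mathcal{P}(\C^n)$ through the coefficient stratification $\mathcal{P}(\C^n) = \bigcup_{k,m} P(k,m)$ from Lemma \ref{P closed}, or by invoking the unique polynomial extension property to lift the selection to $\C^n$ in a Borel manner.
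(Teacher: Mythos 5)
Your proposal follows essentially the same route as the paper's proof: partition $\Omega$ by the countably many values of $K$, use the measurability of $\omega\mapsto f(\omega,\cdot)$ into $C(K_j,\C)$ (the paper cites Proposition \ref{random element} where you cite Corollary \ref{maps F to f}, which is the same tool), observe the multifunction of Oka--Weil approximants is non-empty and Suslin-valued, apply the selection theorem of \cite{AB1984} to get an almost-everywhere measurable selection on each piece, and glue. The argument is correct and matches the paper's.
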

\begin{proof}
	We denote $\{K_j\}_{j=0}^\infty$ the values taken by $K$. We also denote 
	\begin{equation*}
	\Omega_j=\{\omega \in \Omega : K(\omega)=K_j\}=K^{-1}(K_j).
	\end{equation*}
As this is the pre-image of a closed set (the single compact set), each $\Omega_j$ is measurable.
	
	We will define as measurable sets of $\Omega_j$ the following $\sigma$-algebra : $\mathcal A_j=\{P\cap \Omega_j : P\in \mathcal A\}$. But $\Omega_j$ is measurable in $\Omega$, and thus this definition is equivalent to $\mathcal A_j=\{P\subseteq \Omega_j : P\in \mathcal A\}$. Most importantly, every measurable set of $\Omega_j$ is measurable in $\Omega$.
	
	Let $f_j$ be the restriction of $f$ to $\Omega_j\times K_j.$ We claim this is a random function. Fix $z\in K_j$. We know that $f(\cdot, z)$ is a measurable mapping between $K^{-1}(z)$ and $\C$ and, for an open set $O$, $$(f_j(\cdot,z))^{-1}(O)=\Omega_j\cap (f(\cdot,z))^{-1}(O).$$ This proves that $f(\cdot,z)$ is a measurable function over $\Omega_j$, and thus $f$ is a random function.
	
	By Proposition \ref{random element}, we know that the mapping $\omega\mapsto f(\omega,\cdot)$ between $\Omega_j$ and $C(K_j,\C)$ is measurable.
	
	We know that $f_j(\omega,\cdot)$ is holomorphic in a neighbourhood of $K_j$. But $K_j$ is, by hypothesis, polynomially convex. Thus, we have by the Oka-Weil Approximation Theorem that the multivalued mapping
	\begin{align*}
		\psi_{j}(\omega)=\{q\in \mathcal P(\C^n) : \|f_j(\omega,\cdot)-q(\cdot)\|_{K_j}<\varepsilon(\omega)\}
	\end{align*}
	is never empty.
	
	By Theorem 1 of \cite{AB1984}, and since $\mathcal P(\C^n)$ is a Suslin subset of $C(K_j,\C)$, this means there exists a measurable selection $p_j : \Omega_j \rightarrow \mathcal P(\C^n)$ which approximates $f$ almost everywhere.
	
	We now define $p : \Omega\rightarrow\mathcal P(\C^n)$ by setting $p|_{\Omega_j}=p_j$. Clearly, this function approximates $f$ almost everywhere. Indeed, if we denote $L_j$ the set of $\omega\in \Omega_j$ for which $p$ does not approximate $f$, we have that $L=\cup_{j=0}^\infty L_j$ and thus $\mu(L)=\sum_{i=0}^\infty \mu(L_j)=0$.
	
	It is only left to show that this is a generalized random function. By Proposition \ref{random element}, applied to the uniform algebra $A_j=C(K_j,\C),$ we have that for fixed $z\in K_j$, the mapping $\omega \mapsto p_j(\omega,z)$ is measurable. Thus, we can now say that, for a given open set $O\subset \C$ and $z \in \C^n$, the set
	\begin{align*}
		(p(\cdot,z))^{-1}(O)=\bigcup_{\{j\in \N: z\in K_j\}}(p_j(\cdot,z))^{-1}(O)
	\end{align*}
is a measurable set as a countable union of measurable sets. 

Since this is a subset of the measurable set 
$$
	K^{-1}(z)=\bigcup_{j : z\in K_j}\Omega_j,
$$
the function $p$ is measurable on $K^{-1}(z)$ and hence $p$ is a generalized random function.
\end{proof}

We might be interested to see what happens when trying to remove the "almost everywhere" or by reducing the hypothesis of the compact set : could it be true if it were only uniformly separable, or even only random?

%%%%%%%%%%%%%%%%%%%%%%%%%%%%%%%%%%%%%%%%%%%
%%%%%%%%%%%%%%%%%%%%%%%%%%%%%%%%%%%%%%%%%%%%

\section{Annex}\label{annex}

Recall that we here use the Borel $\sigma$-algebra on $\R^n$ (and $\C^n$  viewed as $\R^{2n}$) to define measurability. To facilitate the reading of this paper, we collect here some well known basic facts which have been used  throughout the paper. 

\begin{proposition} \label{component}
Let $(\Omega, \mathcal A)$ be a measurable space; let $f_i:\Omega\rightarrow \mathbb R$ for each $1\leq i \leq n$ and put $f=(f_1,\ldots,f_n).$ 
Then  $f : \Omega\rightarrow \mathbb R^n$ is measurable if and only if each $f_i$ is measurable.

Let $f:\Omega\rightarrow\C^n.$ By the canonical identification $\C^n=\R^{2n}$ and writing 
$f=u+iv$, where $u,v:\Omega\rightarrow\R^n,$ it follows that $f$ is measurable if and only if $u$ and $v$ are measurable. 
\end{proposition}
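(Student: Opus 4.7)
The plan is to handle this as two standard facts about measurability of vector-valued functions, using the fact that the Borel $\sigma$-algebra on $\R^n$ is generated by a convenient class of boxes.

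For the forward implication, I would observe that each coordinate projection $\pi_i : \R^n \to \R$ is continuous and therefore Borel measurable. If $f$ is measurable, then $f_i = \pi_i \circ f$ is a composition of measurable functions, hence measurable.

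For the reverse implication, I would use a generating-class argument. The Borel $\sigma$-algebra $\mathcal{B}(\R^n)$ is generated by the collection of open boxes $R = (a_1,b_1) \times \cdots \times (a_n,b_n)$ (this is a standard fact about the product topology on $\R^n$). For such a box,
$$f^{-1}(R) = \bigcap_{i=1}^n f_i^{-1}\bigl((a_i, b_i)\bigr),$$
which is a finite intersection of measurable sets, hence measurable. Since the family of sets $E \subset \R^n$ for which $f^{-1}(E)$ is measurable is itself a $\sigma$-algebra (pre-images commute with complements and countable unions), and since it contains the generating class of open boxes, it contains all of $\mathcal{B}(\R^n)$. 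Therefore $f$ is measurable.

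The second part reduces immediately to the first: under the identification $\C^n \cong \R^{2n}$ via $z \mapsto (\re z_1, \im z_1, \ldots, \re z_n, \im z_n)$, writing $f = u + iv$ shows that the $2n$ real components of $f$ are precisely the $n$ components of $u$ together with the $n$ components of $v$. Applying the first part to the identification $\R^{2n}$, measurability of $f$ is equivalent to measurability of all $2n$ components, which is in turn equivalent (again by the first part, applied to $u$ and $v$ separately) to measurability of both $u$ and $v$. There is no real obstacle here; the only thing to be careful about is stating clearly that open boxes generate $\mathcal B(\R^n)$ so that the generating-class reduction is justified.
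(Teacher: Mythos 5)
Your proof is correct. The paper itself gives no proof of Proposition \ref{component} --- it is listed in the annex among ``well known basic facts'' and stated without argument --- but the route you take (coordinate projections for one direction, and the generating-class argument with open boxes for the other) is exactly the standard one the authors evidently have in mind, as they allude to it just after Proposition \ref{cont} with the remark that an open set in $\R^{2n}$ is a countable union of open cubes. Nothing is missing; your care in noting that the sets with measurable preimage form a $\sigma$-algebra containing the generating boxes is precisely the point that makes the reverse implication rigorous.
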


\begin{proposition} \label{cont}
If $X$ be a topological space and let $f : X\rightarrow \mathbb R^n$. Let $f=(f_1,\dots,f_n)$ where $f_i:X\rightarrow \mathbb R$ for each $1\leq i \leq n$. Then $f$ is continuous if and only if each $f_i$ is continuous.

By canonically identifying $\C^n$ with $\R^{2n}$, the analogous statement follows for $\C^n$.

\end{proposition}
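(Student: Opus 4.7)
The plan is to prove both directions via the standard product-space argument, using the projection maps $\pi_i : \R^n \to \R$, $(x_1,\ldots,x_n) \mapsto x_i$, which are well known to be continuous.

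For the forward direction, suppose $f : X \to \R^n$ is continuous. Then for each $i$, the component $f_i = \pi_i \circ f$ is the composition of two continuous functions, hence continuous. This direction is essentially one line.

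For the reverse direction, suppose each $f_i : X \to \R$ is continuous. I would use the fact that the topology on $\R^n$ has a base consisting of open boxes $U_1 \times \cdots \times U_n$, where each $U_i$ is an open subset of $\R$. Since continuity can be checked on a subbase (or a base), it is enough to verify that $f^{-1}(U_1 \times \cdots \times U_n)$ is open in $X$. But
\[
    f^{-1}(U_1 \times \cdots \times U_n) = \bigcap_{i=1}^n f_i^{-1}(U_i),
\]
and each $f_i^{-1}(U_i)$ is open by continuity of $f_i$. A finite intersection of open sets is open, so $f$ is continuous. There are no real obstacles here; the argument is entirely mechanical and relies only on the definition of the product topology on $\R^n$.

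Finally, for the statement over $\C^n$, the canonical identification $\C^n \cong \R^{2n}$ sending $(z_1,\ldots,z_n) = (u_1+iv_1,\ldots,u_n+iv_n)$ to $(u_1,v_1,\ldots,u_n,v_n)$ is a homeomorphism, so continuity of $f : X \to \C^n$ is equivalent to continuity of the $2n$-tuple of real components, which in turn (by the real case just proved) is equivalent to continuity of each complex coordinate $f_i = u_i + i v_i$. No new ideas are needed beyond the real case.
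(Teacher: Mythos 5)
Your proof is correct and takes essentially the same route as the paper, which gives no detailed argument but merely remarks that the result follows as in the preceding measurability proposition because every open set in $\R^{n}$ (or $\R^{2n}$) is a countable union of open cubes --- precisely the box-decomposition you carry out explicitly for the reverse direction. The projection argument for the forward direction and the $\C^n\cong\R^{2n}$ identification are likewise what the paper intends.
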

This can be proved with essentially the same proof as the preceding lemma, since an open set in  
 $\R^{2n}$ is a countable union of open $n$-cubes.

\begin{proposition}[\cite{Ma}]\label{limsmooth}
Let $(\Omega, \mathcal A)$ be a measurable space and $Y$ a metric space. Suppose $g_n : \Omega\rightarrow Y$ are measurable functions and $g : \Omega\rightarrow Y$ such that $\lim_{n\rightarrow \infty} g_n = f$ pointwise. Then $g$ is measurable.
\end{proposition}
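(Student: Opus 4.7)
The plan is to use the characterization of Borel measurability via closed sets: since $Y$ is a metric space, the Borel $\sigma$-algebra $\mathcal B(Y)$ is generated by the closed subsets of $Y$, so it suffices to show that $g^{-1}(F)$ is measurable for every closed $F \subseteq Y$.

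Fix a closed set $F \subseteq Y$ and consider the distance function $d(\cdot, F): Y \to \R$, which is $1$-Lipschitz and hence continuous. The key observation is that for closed $F$ we have $y \in F$ iff $d(y, F) = 0$. Combined with the pointwise convergence $g_n(\omega) \to g(\omega)$ and the continuity of $d(\cdot, F)$, this yields $d(g_n(\omega), F) \to d(g(\omega), F)$ for every $\omega$. Therefore $g(\omega) \in F$ if and only if $d(g_n(\omega), F) \to 0$, which we rewrite as
\begin{equation*}
    g^{-1}(F) = \bigcap_{k=1}^\infty \bigcup_{N=1}^\infty \bigcap_{n \geq N} \left\{\omega : d(g_n(\omega), F) < \tfrac{1}{k}\right\}.
\end{equation*}

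Each function $\omega \mapsto d(g_n(\omega), F)$ is the composition of the measurable function $g_n$ with the continuous function $d(\cdot, F)$, hence is measurable from $\Omega$ to $\R$. Consequently, each set $\{\omega : d(g_n(\omega), F) < 1/k\}$ lies in $\mathcal A$, and by taking the countable intersections and unions above, $g^{-1}(F) \in \mathcal A$. Since the closed subsets of $Y$ generate $\mathcal B(Y)$, this proves that $g$ is measurable.

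The only delicate point is the choice to work with closed sets rather than open sets: the characterization $y \in F \Leftrightarrow d(y,F) = 0$ translates pointwise convergence into a clean countable Boolean combination, whereas for open $U$ the condition $y \in U$ does not admit such a simple expression in terms of a single continuous function. Beyond that, the argument is a routine application of the fact that continuous composition and countable set operations preserve measurability.
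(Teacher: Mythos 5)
Your proof is correct: the set identity $g^{-1}(F) = \bigcap_{k}\bigcup_{N}\bigcap_{n\geq N}\{\omega : d(g_n(\omega),F) < 1/k\}$ holds (the forward inclusion from continuity of $d(\cdot,F)$ and pointwise convergence, the reverse from closedness of $F$), and the measurability of each constituent set and the generation of $\mathcal B(Y)$ by closed sets finish the argument. The paper itself gives no proof of this proposition, citing Malliavin's book instead, so there is nothing to compare against; your argument is the standard one found in the literature for exactly this reason (working with closed sets and the distance function is what makes the limit expressible as a countable Boolean combination, as you correctly note).
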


\begin{proposition}\label{sum}
Let $(\Omega, \mathcal A)$ be a measurable space. Suppose $f : \Omega\rightarrow \mathbb R$ and $g : \Omega\rightarrow \mathbb R$ are measurable functions. Then $f+g$ and $f \cdot g$ are measurable.
\end{proposition}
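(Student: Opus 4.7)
The plan is to leverage Proposition \ref{component} so that we do not have to handle sums and products directly, but instead reduce everything to the composition of a single vector-valued measurable map with a continuous scalar operation.

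First, I would form the vector-valued map $F:\Omega\to\mathbb{R}^2$ defined by $F(\omega)=(f(\omega),g(\omega))$. By Proposition \ref{component}, since each coordinate function $f$ and $g$ is measurable into $\mathbb{R}$, the joint map $F$ is measurable into $\mathbb{R}^2$ (with its Borel $\sigma$-algebra). This is the key reduction step, and it replaces the need to argue about the two operations separately on the level of $\Omega$.

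Next, I would note that the addition map $s:\mathbb{R}^2\to\mathbb{R}$, $s(x,y)=x+y$, and the multiplication map $m:\mathbb{R}^2\to\mathbb{R}$, $m(x,y)=xy$, are continuous. Since continuous maps between topological spaces pull open sets back to open sets and the Borel $\sigma$-algebra is generated by the open sets, both $s$ and $m$ are Borel measurable as maps $(\mathbb{R}^2,\mathcal{B}(\mathbb{R}^2))\to(\mathbb{R},\mathcal{B}(\mathbb{R}))$. Then $f+g=s\circ F$ and $f\cdot g=m\circ F$ are compositions of measurable maps, hence measurable. This completes the argument.

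There is no real obstacle here; the whole content lies in the two observations that (i) Proposition \ref{component} bundles the two functions into a single measurable map to $\mathbb{R}^2$, and (ii) continuous maps are automatically Borel measurable, so postcomposition with the arithmetic operations preserves measurability. The same scheme would extend verbatim to complex-valued $f$ and $g$ by working in $\mathbb{R}^4\cong\mathbb{C}^2$ and using the continuity of addition and multiplication on $\mathbb{C}$.
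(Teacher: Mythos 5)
Your argument is correct and complete: bundling $f$ and $g$ into $F=(f,g)$ via Proposition \ref{component} and then composing with the continuous (hence Borel measurable) maps $s(x,y)=x+y$ and $m(x,y)=xy$ is the standard proof. The paper states this proposition without proof as a well-known fact, and the remark immediately following it (that analogues in $\R^n$ and $\C^n$ follow "by applying Proposition \ref{component} repeatedly") indicates exactly the reduction you carry out, so you have supplied the intended argument rather than diverged from it.
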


By applying Proposition \ref{component} repeatedly, one easily finds analogues for Propositions \ref{limsmooth} and \ref{sum} in $\R^n$ and $\C^n$.

\begin{proposition} \label{randomInt}
Let $(\Omega,\mathcal A)$ be a measurable space and $Q$ a hypercube in $\R^n.$ Suppose $f : \Omega \times Q \rightarrow\C$ is such that $f(\cdot, x)$ is measurable for all $x$ and $f(\omega, \cdot)$ is Riemann integrable on  $Q$ for all $\omega \in \Omega$. Then the function $F(\omega) = \int_Q f(\omega,x) \diff x$ is measurable.
\end{proposition}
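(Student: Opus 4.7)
The plan is to exploit the definition of Riemann integrability to express $F(\omega)$ as a pointwise limit of measurable functions, and then invoke Proposition \ref{limsmooth}.

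First, for each $k \in \N$, I would fix a partition $\mathcal{P}_k$ of $Q$ into finitely many sub-hypercubes $Q_{k,1}, \ldots, Q_{k,N_k}$ of equal side-length $1/k$ (scaled by the side-length of $Q$), and choose a sample point $x_{k,j} \in Q_{k,j}$ for each $j$. These partitions and sample points do not depend on $\omega$. Define the Riemann sum
\begin{equation*}
S_k(\omega) = \sum_{j=1}^{N_k} f(\omega, x_{k,j}) \cdot \mathrm{vol}(Q_{k,j}).
\end{equation*}
Since each $f(\cdot, x_{k,j})$ is measurable by hypothesis, and $S_k$ is a finite linear combination (with constant coefficients $\mathrm{vol}(Q_{k,j})$) of such measurable functions, Proposition \ref{sum} (in its $\C$-valued form via Proposition \ref{component}) guarantees that $S_k : \Omega \to \C$ is measurable for every $k$.

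Next, I would fix $\omega \in \Omega$ and use that $f(\omega, \cdot)$ is Riemann integrable on $Q$. The mesh of $\mathcal{P}_k$ tends to zero as $k \to \infty$, so by definition of Riemann integrability, $S_k(\omega) \to \int_Q f(\omega, x)\,\diff x = F(\omega)$. This convergence holds for every $\omega$, so $S_k \to F$ pointwise on $\Omega$.

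Finally, applying Proposition \ref{limsmooth} (with $Y = \C$, a metric space), the pointwise limit $F$ of the measurable functions $S_k$ is itself measurable. The main thing to be careful about is that the \emph{same} sequence of partitions works for every $\omega$ simultaneously, which is why choosing $\mathcal{P}_k$ and the sample points independently of $\omega$ at the start is essential; Riemann integrability then handles each fiber $f(\omega, \cdot)$ separately. No further obstacle arises.
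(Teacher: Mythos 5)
Your proposal is correct and is essentially the paper's own argument: the paper's proof simply observes that the Riemann sums converge pointwise to $F$ and invokes Proposition \ref{limsmooth}, and you have filled in exactly the details (fixed $\omega$-independent partitions and sample points, measurability of each Riemann sum via Propositions \ref{sum} and \ref{component}) that the paper leaves implicit. No difference in method, only in the level of detail.
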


\begin{proof}
Since $f(\omega,\cdot)$ is Riemann integrable for each $\omega,$ we have pointwise convergence of the Riemann sums to $F$. By Proposition \ref{limsmooth}, we conclude that $F$ is measurable.
\end{proof}

\begin{theorem}[Kuratowski, Ryll-Nardzewski \cite{KRN}]\label{krn}
    Let  $X$ be a complete separable metric  space, $\mathcal B$ the Borel $\sigma$-algebra of $X$, $(\Omega, \mathcal A)$ a measurable space and $\phi$ a multifunction on $\Omega$ taking values in the set of nonempty closed subsets of $X$. Suppose that $\phi$ is $\mathcal A$-weakly measurable, that is, for every open set $U$ of $X$, we have
    \begin{equation*}
        \{\omega : \phi(\omega) \cap U \neq \emptyset\} \in \mathcal A\}.
    \end{equation*}
    Then $\phi$ admits a selection that is measurable.
\end{theorem}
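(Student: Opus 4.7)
The plan is to follow the classical Kuratowski--Ryll-Nardzewski construction: build a Cauchy sequence of countably-valued measurable functions $f_n:\Omega\to X$ each of whose values lies close to $\phi(\omega)$, then pass to the limit using completeness of $X$ and closedness of the values $\phi(\omega)$.

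Fix a complete metric $d$ on $X$ and a countable dense subset $\{x_k\}_{k\geq 1}\subset X$. I would construct inductively measurable $f_n:\Omega\to X$ taking values in $\{x_k\}$ and satisfying, for every $\omega$,
\begin{equation*}
d(f_n(\omega),\phi(\omega))<2^{-n},\qquad d(f_n(\omega),f_{n-1}(\omega))<2^{-(n-1)}.
\end{equation*}
For the base case, set $f_0\equiv x_1$ (or any fixed constant), which is trivially measurable. For the inductive step, for each $k$ define
\begin{equation*}
A_k^{(n)}=\{\omega:\phi(\omega)\cap B(x_k,2^{-n})\neq\emptyset\}\cap\{\omega:d(x_k,f_{n-1}(\omega))<2^{-(n-1)}\}.
\end{equation*}
The first factor lies in $\mathcal{A}$ by weak measurability; the second lies in $\mathcal{A}$ because $f_{n-1}$ is measurable and $d(\cdot,x_k)$ is continuous (so $\omega\mapsto d(x_k,f_{n-1}(\omega))$ is measurable by composition). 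Density of $\{x_k\}$ together with the approximation property $d(f_{n-1}(\omega),\phi(\omega))<2^{-(n-1)}$ will guarantee $\bigcup_k A_k^{(n)}=\Omega$. Disjointify by $B_1^{(n)}=A_1^{(n)}$, $B_k^{(n)}=A_k^{(n)}\setminus(A_1^{(n)}\cup\cdots\cup A_{k-1}^{(n)})$, which remain measurable, and define $f_n(\omega)=x_k$ on $B_k^{(n)}$. Measurability of $f_n$ follows because the preimage of any Borel set is a countable union of the $B_k^{(n)}$'s.

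Once the sequence $\{f_n\}$ is built, the Cauchy bound $d(f_n,f_{n-1})<2^{-(n-1)}$ shows that $\{f_n(\omega)\}$ is Cauchy in $X$ for every $\omega$; by completeness it converges to some $f(\omega)\in X$. Since $d(f_n(\omega),\phi(\omega))<2^{-n}\to 0$ and $\phi(\omega)$ is closed, the limit $f(\omega)$ lies in $\phi(\omega)$, so $f$ is a selection. Finally, $f$ is the pointwise limit of measurable functions into a metric space, so Proposition \ref{limsmooth} yields measurability of $f$.

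The main obstacle is the inductive step: one must arrange simultaneously that the new value $f_n(\omega)$ lies within $2^{-n}$ of $\phi(\omega)$ \emph{and} within $2^{-(n-1)}$ of $f_{n-1}(\omega)$, while the assignment $\omega\mapsto f_n(\omega)$ remains measurable. This is exactly where weak measurability of $\phi$ must be combined with measurability of $f_{n-1}$ through the intersection defining $A_k^{(n)}$; verifying that these sets actually cover $\Omega$ requires using the approximation property of $f_{n-1}$ together with density of $\{x_k\}$, which is a small but essential triangle-inequality argument.
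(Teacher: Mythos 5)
The paper does not actually prove this statement: Theorem \ref{krn} is quoted directly from \cite{KRN} and used as a black box, so there is no in-paper proof to compare against. Your proposal is the standard Kuratowski--Ryll-Nardzewski construction, and its architecture is sound: the sets $A_k^{(n)}$ are measurable for exactly the reasons you give (weak measurability of $\phi$ for the first factor, measurability of $f_{n-1}$ composed with the continuous map $d(x_k,\cdot)$ for the second), the disjointification preserves measurability, the Cauchy estimate plus completeness of $X$ gives a pointwise limit, closedness of $\phi(\omega)$ makes that limit a selection, and Proposition \ref{limsmooth} is precisely the right tool in this paper's framework to conclude measurability of the limit.

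There is, however, one genuine gap: the base case. Taking $f_0\equiv x_1$ constant does not satisfy $d(f_0(\omega),\phi(\omega))<2^{-0}=1$, since $\phi(\omega)$ may lie arbitrarily far from $x_1$. Your own covering claim $\bigcup_k A_k^{(1)}=\Omega$ for the step $n=1$ relies precisely on this approximation property of $f_0$ (it is the hypothesis fed into the triangle-inequality argument you describe), so as written the induction cannot start. The fix is standard: construct $f_0$ by the same covering device as the inductive step but without the proximity condition to a previous function, i.e.\ set $A_k^{(0)}=\{\omega:\phi(\omega)\cap B(x_k,1)\neq\emptyset\}$, note that these sets cover $\Omega$ because each $\phi(\omega)$ is nonempty and $\{x_k\}$ is dense, disjointify, and let $f_0=x_k$ on the $k$-th piece. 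With that replacement the rest of your argument goes through unchanged.
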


If $U$ is an open subset of $\R^N$, and $(\Omega,\mathcal A)$ is a measurable space, a random function $f : \Omega \times U \rightarrow \R$ is a random smooth function on $U$ if $f(\omega, \cdot)$ is $C^\infty$ for each $\omega \in \Omega$. For each multi-index $\alpha=(\alpha_1,\ldots,\alpha_N), \, \alpha_j\in\N,$ we denote by $f^{(\alpha)}$ the function $f^{(\alpha)}:\Omega\times U\rightarrow \R,$ which for every fixed $\omega\in\Omega$ is defined as
\begin{equation*}
    f(\omega,\cdot)^{(\alpha)} = \frac{\partial^{\alpha_1 + \ldots + \alpha_N} f}{\partial x_1^{\alpha_1}\dots \partial x_N^{\alpha_N}}(\omega,\cdot).
\end{equation*}

\begin{lemma} \label{smoothlemma}
If $f$ is a random smooth funtion, then, for each muli-index $\alpha,$ the function $f^{(\alpha)}$ is also a random smooth function.
\end{lemma}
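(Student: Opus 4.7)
The plan is to prove the lemma by induction on $|\alpha|$. The smoothness of $f^{(\alpha)}(\omega,\cdot)$ for each fixed $\omega$ is immediate from the hypothesis that $f(\omega,\cdot)\in C^\infty(U)$, so the entire content of the lemma is to verify that $f^{(\alpha)}(\cdot,x)$ is measurable for each $x\in U$.

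The base case $|\alpha|=0$ is trivial. For the inductive step, it suffices to show that if $g:\Omega\times U\rightarrow\R$ has the property that $g(\omega,\cdot)$ is $C^\infty$ for each $\omega$ and $g(\cdot,x)$ is measurable for each $x$, then the same holds for the first-order partial derivative $\partial g/\partial x_j$ for any $1\le j\le N$. Once this is established, applying it $|\alpha|$ times in succession (in the order prescribed by the multi-index) yields the full result, since at each stage the function produced satisfies the inductive hypothesis.

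For the first-order case, I would fix $x\in U$ and choose a sequence $h_n\to 0$ of nonzero reals with $x+h_n e_j\in U$ eventually. Define
\begin{equation*}
g_n(\omega) = \frac{g(\omega, x+h_n e_j) - g(\omega, x)}{h_n}.
\end{equation*}
For each $n$, $g_n$ is measurable as a linear combination of the two measurable functions $g(\cdot,x+h_n e_j)$ and $g(\cdot,x)$, by Proposition \ref{sum}. Since $g(\omega,\cdot)$ is smooth, the sequence $g_n(\omega)$ converges pointwise (in $\omega$) to $(\partial g/\partial x_j)(\omega,x)$. By Proposition \ref{limsmooth}, the pointwise limit of a sequence of measurable functions into the metric space $\R$ is measurable, so $(\partial g/\partial x_j)(\cdot,x)$ is measurable.

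I do not expect any real obstacle here: all the ingredients (measurability of linear combinations, preservation of measurability under pointwise limits, smoothness of $f(\omega,\cdot)$ guaranteeing the existence of the differential quotient limit) are quoted in the annex. The mild care needed is only to confirm that the choice of sequence $h_n$ can be made independently of $\omega$, which is immediate because $x$ is fixed in $U$ and $U$ is open.
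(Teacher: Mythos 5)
Your proof is correct and follows essentially the same route as the paper: reduce to a single first-order partial derivative by induction and symmetry, show the difference quotient at a fixed $x$ is measurable in $\omega$, and pass to the pointwise limit via Proposition \ref{limsmooth}. The only cosmetic difference is that you use a general sequence $h_n\to 0$ where the paper takes $h_n = 1/k$.
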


\begin{proof}
    It is sufficient to show the proposition for $\alpha = (1,0,\ldots,0)$ since the proof of the lemma follows from symmetry and induction. Consider the differential quotient
\begin{equation*}
    g_k(\omega,x) = \frac{f(\omega, x_1 + \frac{1}{k}, x_2, \ldots , x_N) - f(\omega, x_1, \ldots, x_N)}{\frac{1}{k}}.
\end{equation*}
For fixed $k$ (big enough in order for $g_k$ to be well-defined), the function $g_k(\cdot,x)$ is measurable, and by differentiablity of $f$, we have
\begin{equation*}
    \lim_{k\rightarrow\infty} g_k(\omega,x) = \pardiff{f}{x_1}(\omega,x) = f(\omega,x)^{(\alpha)}.
\end{equation*}
By Proposition \ref{limsmooth}, we have that $f(\cdot,x)^{(\alpha)}$ is the limit of measurable functions and is therefore measurable. Similarly, since $f(\omega,\cdot)$ is smooth on $U$, $f(\omega,\cdot)^{(\alpha)}$ is also smooth on $U$. Thus $f^{(\alpha)}$ is a random smooth function.
\end{proof}

%%%%%%%%%%%%%%%%%%%%%%%%%%%%%%%%%%%%%%%%%%%%%%%

\end{document}